\theoremstyle{plain}
\newtheorem{theorem}{Theorem}
\newtheorem{lemma}{Lemma}
\newtheorem{corollary}{Corollary}
\newtheorem{proposition}{Proposition}
\newtheorem{claim}{Claim}
\theoremstyle{definition}
\newtheorem{definition}{Definition}
\newtheorem{example}{Example}
\newtheorem{fact}{Fact}
\newtheorem{problem}{Problem}
\newtheorem{remark}{Remark}
\theoremstyle{remark}
\newcommand{\bbR}{\mathbb{R}}
\newcommand{\bbN}{\mathbb{N}}
\newcommand{\Hlb}{\mathcal{H}}
\newcommand{\calX}{\mathcal{X}}
\newcommand{\calY}{\mathcal{Y}}
\newcommand{\calZ}{\mathcal{Z}}
\newcommand{\calR}{\mathcal{H}}
\newcommand{\calS}{\mathcal{S}}
\newcommand{\bmv}{{{v}}}
\newcommand{\bmw}{{{w}}}
\newcommand{\bmx}{{{x}}}
\newcommand{\bmy}{{{y}}}
\newcommand{\bmz}{{{z}}}
\newcommand{\bmzero}{{{0}}}
\newcommand{\bmxi}{{\xi}}
\newcommand{\bmzeta}{{\zeta}}
\newcommand{\bmeta}{{\eta}}
\newcommand{\frakL}{{\mathfrak L}}
\newcommand{\numI}{{I}}
\newcommand{\numII}{{I\hspace{-.1em}I}}
\newcommand{\numIII}{{I\hspace{-.1em}I\hspace{-.1em}I}}
\newcommand{\numIV}{{I\hspace{-.1em}V}}
\newcommand{\rmO}{{\rm O}}
\newcommand{\rmI}{{\rm I}}
\newcommand{\rmId}{{\rm Id}}
\newcommand{\Tlcp}{{T_{\rm LiGME}}}
\newcommand{\rank}{\mathop{\rm rank}\nolimits}
\newcommand{\ran}{\mathop{\rm ran}\nolimits}
\newcommand{\Null}{\mathop{\rm null}\nolimits}
\newcommand{\dom}{\mathop{\rm dom}\nolimits}
\newcommand{\gra}{\mathop{\rm gra}\nolimits}
\newcommand{\Prox}{\mathop{\rm Prox}\nolimits}
\newcommand{\Fix}{\mathop{\rm Fix}\nolimits}
\newcommand{\argmin}{\mathop{\rm arg\,min}\limits}
\newcommand{\minimize}{\mathop{\rm minimize}\limits}
\newcommand{\sfT}{{*}}
\newcommand{\FI}{{\Rightarrow}}
\newcommand{\IFF}{\Leftrightarrow}
\newcommand\fleqnoff{\@fleqnfalse\@mathmargin\@centering}
\newcommand\fleqnon[1][\leftmargini]{\@fleqntrue\@mathmargin=#1\relax
	\@ifundefined{mathindent}{\let\mathindent\@mathmargin}{}}
\newcommand{\opnorm}{\@ifstar\@opnorms\@opnorm}
\newcommand{\@opnorms}[1]{%
  \left|\mkern-1.5mu\left|\mkern-1.5mu\left|
   #1
  \right|\mkern-1.5mu\right|\mkern-1.5mu\right|
}
\newcommand{\@opnorm}[2][]{%
  \mathopen{#1|\mkern-1.5mu#1|\mkern-1.5mu#1|}
  #2
  \mathclose{#1|\mkern-1.5mu#1|\mkern-1.5mu#1|}
}
\title{Linearly Involved Generalized Moreau Enhanced Models and Their Proximal Splitting Algorithm under Overall Convexity Condition}
\author{Jiro Abe, Masao Yamagishi, and Isao Yamada \\
  Tokyo Institute of Technology\\[1mm]
  abe@sp.ce.titech.ac.jp, \{myamagi, isao\}@ict.e.titech.ac.jp
}
\begin{document}
\maketitle

\begin{abstract}
The convex envelopes of the direct discrete measures, for the sparsity of vectors or for  the low-rankness of matrices, have been utilized extensively as practical penalties 
in order to compute a globally optimal solution of the corresponding regularized least-squares models.  
Motivated mainly by the ideas in [Zhang'10, Selesnick'17, Yin, Parekh, Selesnick'19] to exploit nonconvex penalties in the regularized least-squares models without losing their overall convexities,  this paper presents {\em the Linearly involved Generalized Moreau Enhanced (LiGME) model} as a unified extension of such utilizations of nonconvex penalties.  The proposed model can admit multiple nonconvex  penalties without losing its overall convexity and thus is applicable to much broader scenarios in the sparsity-rank-aware signal processing. 
Under the general overall-convexity condition of  the LiGME model,  we also present a novel proximal splitting type algorithm of guaranteed convergence to a globally optimal solution.  Numerical experiments in  typical examples of  the sparsity-rank-aware signal processing demonstrate the effectiveness of the LiGME models and the proposed proximal splitting algorithm.
\end{abstract}

%

%
%

%
%

\section{Introduction}
Many tasks in inverse problems for data sciences and engineerings (see, e.g., \cite{nashed76:_gener_inver_applic,groetsch1993inverse,Bertero-Boccacci98,nashed02:_inver_probl_image_analy_medic_imagin,Ben-Israel-Greville03,byrne2007applied,Elad2010-sparse,Starck-Murtagh-Fadili15,Theodoridis15} and references therein), including signal processing and machine learning, 
have been studied as estimations of an unknown vector $x^{\star}\in {\cal X}$ from the observed data $y \in {\cal Y}$ that follows the 
linear regression model:  
\begin{equation}
y=Ax^{\star}+{\varepsilon}
\label{intro:lin-regression}, 
\end{equation}
where $({\cal X}, \langle \cdot,\cdot\rangle_{\cal X}, \|\cdot\|_{\cal X})$ and  
$({\cal Y}, \langle \cdot,\cdot\rangle_{\cal Y}, \|\cdot\|_{\cal Y})$ are 
finite dimensional real Hilbert spaces, $A:{\cal X}\rightarrow {\cal Y}$ is a known bounded linear operator and ${\varepsilon}\in {\cal Y}$ is an unknown noise vector.  
A common approach for such  estimation problems is to solve  
the {\sl regularized least-squares} minimization problem:  
\begin{equation}
\minimize_{x \in {\cal X}} \  J_{\Psi \circ {\mathfrak L}}(x) := 
\frac{1}{2} \|y - Ax\|_{\cal Y}^2 + \mu \Psi \circ {\mathfrak L}(x), \ \ \mu > 0,
\label{intro:involveL}
\end{equation}
where $\frac{1}{2}\|y - Ax \|_{\cal Y}^2$ is the least-squares term that measures the distance between $y$ and $A x$,  
$\Psi \circ {\mathfrak L}$ is a regularizer (or a penalty) 
designed  strategically, e.g., based on a prior knowledge on $x^{\star}\in {\cal X}$,  to obtain its better estimate as a minimizer of $J_{\Psi \circ L}$ with a certain 
real Hilbert space $({\cal Z}, \langle \cdot,\cdot\rangle_{\cal Z}, \|\cdot\|_{\cal Z})$,  
a certain bounded linear operator  ${\mathfrak L}:{\cal X}\rightarrow {\cal Z}$,  
a certain function $\Psi: {\cal Z}\rightarrow (-\infty,\infty]$ and  a regularization parameter $\mu>0$ providing the trade-off between the lest-squares term and the regularizer. To study optimization algorithms for  \eqref{intro:involveL}  
with general $\Psi$ which is not necessarily differentiable at every $x\in {\cal X}$,   
the decoupled expression of  $\Psi$ and ${\mathfrak L}$ in \eqref{intro:involveL} is   
very crucial even if $\Psi$ is convex because we usually need many nontrivial ideas to deal with  $\Psi$ and ${\mathfrak L}$ separately. 
Design of $(\Psi, {\mathfrak L}, \mu)$ depends on applications as well as mathematical tractability for the optimization task. Typical examples are found as follows.  

\begin{example}\label{ex-regularizations}
	\begin{enumerate}
		\item[(a)] ({\sl Ridge regression or Tikhonov type 
                    regularization}) 
                   By letting $\Psi(\cdot)=\|\cdot\|_{\cal Z}^2$ and ${\mathfrak L}={\rm Id}$, 
                  the problem (\ref{intro:involveL}) reproduces a classical regularization known as the {\sl ridge regression estimator} \cite{Horel62,Horel-Kennard70}, essentially based on common idea of the so-called 
		Tikhonov type regularization \cite{Tikhonov63,Tikhonov77} which has been extensively studied and extended \cite{Hansen93,Hanke-Hansen93,Bertero-Boccacci98,Golub-Hansen-OLeary99,Ben-Israel-Greville03}. 
		\item[(b)] ({\sl $\ell_{1}$ regularization}) By letting ${\cal X}={\cal Z}:={\mathbb R}^{n}$, $\Psi(\cdot)=\|\cdot\|_{1}$ ($\ell_{1}$-norm)  and ${\mathfrak L}={\rm Id}$,  the problem (\ref{intro:involveL}) reproduces the  $\ell_{1}$ regularization problem which has been a standard model in applications demanding sparse estimates 
                  $x=(x_1,\ldots, x_n)\in {\cal X}$ of $x^{\star}$. For example, in a classification task based on $n$ features corresponding to the components of $x^{\star}$,  not all features are informative, hence we want to keep the most informative components and make the less informative ones equal to zero.   Since the naive approach by choosing $\Psi(x)=\|x\|_{0}$,
                  where $\|x\|_{0}$  stands for the number of nonzero components of $x$,  makes  the problem (\ref{intro:involveL}) 
		in general NP-hard, its convex envelope $\Psi(x)=\|x\|_{1}:=\sum_{i=1}^{n}|x_{i}|$ has been utilized in many applications.  Although this type of regularizations appeared in 70s at the latest in seismology, e.g.,    \cite{Claerbout-Muir73,Taylor-Bank-McCoy79,Santosa-Symes86}, it has attracted an intensive revived interest in statistics  \cite{Tibshirani1996-lasso}, which addressed the LASSO (Least Absolute Shrinkage and Selection Operator) task,  as well as in signal processing and machine learning, in particular in compressed sensing \cite{Candes-Romberg-Tao06,Donoho2006-compressed} and related sparsity aware applications \cite{Elad2010-sparse,Theodoridis15}.  
		\item[(c)] ({\sl Linearly involved $\ell_{p}$ regularization / Wavelet-based regularization  / Total-Variation based  regularization}) By letting 
		${\cal X}={\mathbb R}^{n}$, ${\cal Z}={\mathbb R}^{l}$, 
		$\Psi(z)=(\|z\|_{p})^{p}
		:=\left(\sqrt[p]{\sum_{i=1}^{l}|z_{i}|^{p}}\right)^{p}$ ($p\geq 1$) for 
		$z:=(z_1,\ldots, z_l)\in \bbR^{l}$,  the problem (\ref{intro:involveL}) reproduces the linearly involved $\ell_{p}$ regularizations. For example, setting 
${\mathfrak L}=W$, where $W$ is a wavelet transform matrix,  the problem (\ref{intro:involveL}) reproduces the  so-called wavelet-based regularization, e.g., in  \cite{Daubechies-Defrise-Mol06,Starck-Murtagh-Fadili15}. If we set $\Psi(\cdot)=\|\cdot\|_{1}$ and ${\mathfrak L}=D$, where $D$ is the first order differential  operator (see (\ref{def:D})),  the problem (\ref{intro:involveL}) reproduces the  so-called convex Total Variation (TV) regularization \cite{rudin1992nonlinear}.  The choices of $\Psi(\cdot)=(\|\cdot\|_{p})^{p}$ ($1\leq p<2$), in such applications,  have been preferred to $p=2$ because smaller $p$ is more effective than $p=2$ in order to promote the sparsity of ${\mathfrak L}(x)$ in (\ref{intro:involveL}) 
and also because the choice  $0\leq p<1$ looses the convexity of the function $\Psi$, which usually makes it very hard to find a global minimizer of $J_{\Psi\circ {\mathfrak L}}$.  
The great success of  the model $J_{\Psi\circ {\mathfrak L}}$ with $\Psi(\cdot)=(\|\cdot\|_{p})^{p}$ ($1\leq p<2$)  especially for large scale applications has been achieved 
by the modern  computational techniques,   e.g., proximal splitting \cite{Beck-Teboulle10,Combettes-Pesquet2011,Yamada-Yukawa-Yamagishi2011}  in convex analysis \cite{Rockafellar70,Ekeland-Temam99,Rockafellar2009-variational,Boyd-Vandenberghe04,Bauschke2011-convex,Combettes2015-Compositions}.
\item[(d)]  ({\em Regularized least-squares with multiple penalties})  
Thanks to the remarkable expressive ability of the abstract Hilbert space, the simple form of the regularized least-squares minimization problem in (\ref{intro:involveL}) is very flexible. 
For example,  by letting ${\cal X}={\mathbb R}^{m\times n}\times
{\mathbb R}^{m\times n}=
\left\{
{\mathbf z}=(z_{1}, z_{2})\mid z_{i}\in {\mathbb R}^{m\times n}\ (i=1,2)\right\}$ 
equipped with the addition ${\cal X}\times {\cal X}\rightarrow 
{\cal X}: ({\mathbf x},{\mathbf y})\mapsto 
(x_{1}+y_{1}, x_{2}+y_{2})$, the scalar multiplication 
${\mathbb R}\times {\cal X}\rightarrow {\cal X}:(\alpha, {\mathbf z})\mapsto (\alpha z_{1}, \alpha z_{2})$, and the inner product 
$\langle \cdot, \cdot\rangle_{\cal X}:({\mathbf x},{\mathbf y})\mapsto 
{\rm tr}(x_{1}^{\top}y_{1})+{\rm tr}(x_{2}^{\top}y_{2})
$, we can use  \eqref{intro:involveL} for estimation of a pair of matrices. Moreover,  the form (\ref{intro:involveL})  covers 
seemingly much more general case: 
\begin{equation}
\minimize_{x \in {\cal X}} \  J_{\Psi \circ L}(x) := 
\frac{1}{2} \|y - Ax\|_{\cal Y}^2 + \sum_{i=1}^{\cal M}\mu_{i} \Psi^{\langle i\rangle} \circ {\mathfrak L}_{i}(x), 
\label{intro:multiple-involveL}
\end{equation}
 where multiple penalties are employed in terms of 
 real Hilbert spaces 
 $({\cal Z}_{i}, \langle\cdot,\cdot\rangle_{{\cal Z}_{i}}, \|\cdot\|_{{\cal Z}_{i}})$,   functions $\Psi^{\langle i\rangle}:{\cal Z}_{i}\rightarrow (-\infty, \infty]$,  bounded linear operators 
 ${\mathfrak L}_{i}:{\cal X}\rightarrow {\cal Z}_{i}$ and weights 
 $\mu_{i}>0$
 $(i=1,\ldots, {\cal M})$.    
 This fact can be understood through the following simple translation 
 (see, e.g., \cite{Yamada2017-SVM,Yamada-Yukawa-Yamagishi2011,gandy2011tensor,combettes2008proximal,pierra76:_method,pierra1984decomposition})
 of 
 \eqref{intro:multiple-involveL}  into the form  (\ref{intro:involveL}) by redefining a new Hilbert space  
\begin{equation}\label{prod-space0}
{\cal Z}:={\cal Z}_{1}\times \cdots \times {\cal Z}_{\cal M}=
\left\{
{\mathbf z}=(z_{1},\ldots, z_{\cal M})\mid z_{i}\in {\cal Z}_{i}\ (i=1,\ldots, {\cal M})\right\}
\end{equation}  
equipped with the addition ${\cal Z}\times {\cal Z}\rightarrow 
{\cal Z}: ({\mathbf x},{\mathbf y})\mapsto 
(x_{1}+y_{1},\ldots, x_{\cal M}+y_{\cal M})$, the scalar multiplication 
${\mathbb R}\times {\cal Z}\rightarrow {\cal Z}:(\alpha, {\mathbf z})\mapsto (\alpha z_{1},\ldots, \alpha z_{\cal M})$, and the inner product $({\mathbf x},{\mathbf y})\mapsto 
\langle {\mathbf x},{\mathbf y}\rangle_{\cal Z}:=\sum_{i=1}^{\cal M}\langle x_{i},y_{i}\rangle_{{\cal Z}_{i}}$, and by introducing a new function 
\begin{equation}\label{prod-space-g}
\hspace{-2mm}\Psi:=\bigoplus_{i=1}^{\cal M}\frac{\mu_{i}}{\mu}\Psi^{\langle i\rangle}:  
{\cal Z}\rightarrow (-\infty,\infty]: {\mathbf z}:=(z_{1},\ldots, z_{\cal M})\mapsto 
\sum_{i=1}^{\cal M}\frac{\mu_{i}}{\mu}\Psi^{\langle i\rangle}(z_{i}), 
\end{equation}
together with a new bounded linear operator 
\begin{equation}\label{prod-space-A}
{\mathfrak L}:{\cal X}\rightarrow {\cal Z}: x\mapsto ({\mathfrak L}_{1}x,\ldots, {\mathfrak L}_{\cal M}x). 
\end{equation}
For example, by letting ${\cal X}={\mathbb R}^{m\times n}$ with 
$
\langle \cdot, \cdot\rangle_{\cal X}:(X,Y)\mapsto {\rm tr} \left(X^{\top}Y\right)$,
${\cal Z}_{i}={\mathbb R}^{M_{i}\times N_{i}}$ with   
$
\langle \cdot, \cdot\rangle_{{\cal Z}_{i}}:(X_{i},Y_{i})\mapsto {\rm tr} \left(X_{i}^{\top}Y_{i}\right)
$, we can promote multiple desired features of $X\in {\mathbb R}^{m\times n}$ flexibly by  the model \eqref{intro:multiple-involveL} with $(\Psi^{\langle i\rangle},{\mathfrak L}_{i}, \mu_{i})$ $(i=1,2,\ldots, {\cal M})$.   
\item[(e)]  ({\em Convexity-preserving nonconvex penalties})  The convexity is certainly a key for global optimization.  Indeed, the popularity of 
		$\|\cdot \|_1$ in (b) and (c) has been supported strongly by the fact 
		that it is a convex envelope of  $\|\cdot\|_{0}$, i.e., $\|\cdot\|_{1}$ is the largest convex minorant of $\|\cdot\|_{0}$,  in a vicinity of $0\in \bbR^{l}$.  However restricting   the choice of function  $\Psi$ within convex functions is not the only realistic compromise for ensuring the convexity of   $J_{\Psi\circ {\mathfrak L}}$ in the problem (\ref{intro:involveL}).  For example, by designing strategically a regularizer $\Psi \circ {\mathfrak L}$ combined with the least-squares term in  (\ref{intro:involveL}), we could have alternative possibility to achieve the overall convexity of  (\ref{intro:involveL}), i.e., the convexity of  $J_{\Psi\circ {\mathfrak L}}$.
		The so-called   convexity-preserving nonconvex penalties were introduced, in late 80's by Blake and Zisserman \cite{Blake1987-visual}, and followed for example by Nikolova \cite{nikolova1998-estimation,nikolova1999-Markovian,Nikolova2015-energy}, as  nonconvex regularizers that can maintain the overall convexity after combined with  
		some convex data-fidelity terms.  
		For recent developments of the convexity-preserving nonconvex penalties, see \cite{ding2015-ArtifactFree,Mollenhoff2015-primal,bayram2016-convergence,carlsson2016-Convexification,Lanza2016-convex,Mohammadi2016-overall_convexity,lanza2017-Nonconvex,selesnick2017-Sparsea,soubies2017-Continuous} and references therein.
		Most of these works rely on certain strong convexity assumptions in the least squares  term, which corresponds to the assumption for the nonsingularity of $A^{\sfT}A$ in the scenario of \eqref{intro:involveL}, where $A^{\sfT}$ stands for the adjoint operator of $A$.
		An exceptional example, which is free from such an assumption, has been introduced by Selesnick \cite{Selesnick2017-sparse}
                as the \emph{generalized minimax concave (GMC) penalty function}\renewcommand{\thefootnote}{\arabic{footnote}}\setcounter{footnote}{0}\footnote{We use the notation $(\|\cdot\|_{1})_B$ 
		in place of its original notation $\Psi_{B}$ used  in \cite{Selesnick2017-sparse} for   
		the  GMC penalty  because  the GMC penalty in \cite{Selesnick2017-sparse} was introduced 
		 as a nonconvex alternative to $\|\cdot\|_{1}$ with  $B \in \bbR^{q \times n}$. In Definition \ref{def:GMC} of the present  paper, we will use $\Psi_{B}$  
		 in much wider sense to denote a nonconvex alternative to 
a general proximable convex function $\Psi$ defined on finite dimensional real Hilbert space. } 
		\begin{equation}
(\|\cdot\|_{1})_{B}(\cdot):=\|\cdot\|_{1}-\min_{v\in {\mathbb R}^{n}}
\left[\|v\|_{1}+\frac{1}{2}\|B(\cdot-v)\|_{{\mathbb R}^{q}}^{2}\right]
\label{GMC-penalty-intro}
\end{equation}
with a parameter $B \in \bbR^{q \times n}$. The GMC penalty function is a parameterized multidimensional extension of the \emph{minimax concave (MC) penalty function} \cite{Zhang2010-mcp} (see also \cite{bayram2015-penalty,fornasier2008-iterative})\footnote{The MC penalty 
			\[
                          \mbox{}^{\beta}|\cdot|_{\mathrm{MC}}
                          :{\mathbb R}\rightarrow {\mathbb R}_{+}:x\mapsto \left\{
			\begin{array}{ll}
			|x|-\frac{1}{2\beta}x^{2},&\mbox{ if }|x|\leq \beta,\\
			\frac{\beta}{2},&\mbox{ otherwise,}
			\end{array}
			\right.
			\]
			where $\beta\in {\mathbb R}_{++}$, was introduced in \cite{Zhang2010-mcp} for achieving a nearly unbiased estimate by minimizing 
			$J_{\mathrm{MC}}:{\mathbb R}^{n}\rightarrow {\mathbb R}:x=(x_{1},\ldots, x_{n})^{\top}\mapsto \frac{1}{2} \| y - A x \|^2 + \mu \sum_{i=1}^{n}\mbox{}^{\beta}|x_{i}|_{\mathrm{MC}}$. In fact, by setting 
			$B^{\sfT}B = \beta {\rm Id}$, the GMC penalty function $(\|\cdot\|_{1})_B$ 			
			reproduces the MC penalty function as  
			$(\|\cdot\|_{1})_B(x)=\sum_{i=1}^{n}\mbox{}^{\beta}|x_{i}|_{\mathrm{MC}}$ \cite[Proposition~12]{Selesnick2017-sparse}.}.  It is known that  (i) the GMC penalty function $(\|\cdot\|_{1})_B$ is nonconvex except for $(\|\cdot\|_{1})_{\rmO_{q \times n}}=\|\cdot\|_{1}$ (see
                       Remark~\ref{rem-prop:exJ_cond}(ii));
                       (ii) for any $A \in \bbR^{m \times n}$, $(\|\cdot\|_{1})_B$ can maintain the overall convexity of $J_{(\|\cdot\|_{1})_B \circ {\rm Id}}$ in \eqref{intro:involveL} if $A^{\sfT}A - \mu B^{\sfT}B \succeq \rmO_{n}$ is satisfied (see Proposition~\ref{prop:exJ_cond}(b), Remark~\ref{rem-prop:exJ_cond}(iii), and {\cite[Theorem~1]{Selesnick2017-sparse}}).
	\end{enumerate}
\end{example}

The GMC penalty $(\|\cdot\|_{1})_B$ has  great potential for dealing with many nonconvex variations of $\|\cdot\|_1$ 
under single umbrella of the modern convex analysis.
Indeed,  as will be seen in Example \ref{example1:LiGME},  the  GMC function 
can serve as a parametric penalty which bridges the gap between the direct discrete measure of sparsity and its convex envelope function. 
Moreover, for computing a global minimizer of
\begin{equation}
\minimize_{x \in \bbR^n} \  J_{(\|\cdot\|_{1})_B \circ {\mathfrak L}}(x) := \frac{1}{2} \|y - A x \|_{{\mathbb R}^{m}}^2 + \mu (\|\cdot\|_{1})_B \circ {\rm Id}(x), \ \ \mu > 0,
\label{intro:GMCL}
\end{equation}
an iterative algorithm was presented by Selesnick \cite{Selesnick2017-sparse} (see \ref{app:FB}) but only for a special case satisfying 
$B^{\sfT}B = (\theta / \mu) A^{\sfT}A$ ($0 \leq \theta \leq 1$).  
Despite its great potential of the GMC penalty, so far the  applicability of the algorithm in \cite{Selesnick2017-sparse} is very limited. 
 For example, it is not applicable directly  to most  scenarios in Example \ref{ex-regularizations}(c) and (d).  

To maximize the applicability of the excellent ideas of the MC penalty function \cite{Zhang2010-mcp} followed by the GMC penalty function $(\|\cdot\|_{1})_B$  \cite{Selesnick2017-sparse},  we  are interested in the following questions: 
\begin{enumerate}
  \item[(Q1)] Can we extend the model (\ref{intro:GMCL}) proposed in \cite{Selesnick2017-sparse},  without loosing its inherent computational benefit,  to  
\begin{equation}
\minimize_{x \in {\cal X}} \  J_{\Psi_{B} \circ {\mathfrak L}}(x) := 
\frac{1}{2} \|y - Ax\|_{\cal Y}^2 + \mu \Psi_{B} \circ {\mathfrak L}(x), \ \ \mu > 0,
\label{intro:involveL'}
\end{equation}
where ${\cal X}$, ${\cal Y}$, ${\cal Z}$ and $\widetilde{\cal Z}$ are finite dimensional real Hilbert spaces, $y\in {\cal Y}$, $A\in {\cal B}({\cal X},{\cal Y})$, ${\mathfrak L}\in {\cal B}({\cal X},{\cal Z})$ and 
\begin{equation}
\Psi_{B}(\cdot):=\Psi(\cdot)-\min_{v\in {\cal Z}}\left[\Psi(v)+\frac{1}{2}\|B(\cdot-v)\|_{\widetilde{Z}}^{2}\right]
\label{Moreau-E-penalty}
\end{equation}
with $\Psi\in \Gamma_{0}({\cal Z})$ and  $B\in {\cal B}({\cal Z},\widetilde{\cal Z})$ ? 

\item[(Q2)] For given $A \in {\cal B}({\cal X},{\cal Y})$ and 
	${\mathfrak L}\in {\cal B}({\cal X},{\cal Z})$,  what is the general condition for $B \in {\cal B}({\cal Z},\widetilde{\cal Z})$ and $\mu>0$ to ensure the overall convexity of 
	$J_{\Psi_B \circ {\mathfrak L}}$ in (\ref{intro:involveL'}) ?
	\item[(Q3)]  Can we establish any iterative algorithm of guaranteed convergence to globally optimal solution of  (\ref{intro:involveL'})  under general overall-convexity condition ?
	\item[(Q4)]  For given $A \in {\cal B}({\cal X},{\cal Y})$ and 
	${\mathfrak L}\in {\cal B}({\cal X},{\cal Z})$,  can we choose $B \in {\cal B}({\cal Z},\widetilde{\cal Z})$ and  $\mu>0$ flexibly to ensure the overall-convexity $J_{\Psi_B \circ {\mathfrak L}}$ in (\ref{intro:involveL'}) ?  
\end{enumerate}

\begin{remark}
\item[(a)]  (On Q1) 
The function $\Psi_{B}$  in (\ref{Moreau-E-penalty}) is defined in a way similar to  the GMC penalty function $(\|\cdot\|_{1})_B$  in \eqref{GMC-penalty-intro} and  
can be seen as a nonconvexly enhanced penalty 
 for a given much more general convex penalty $\Psi\in \Gamma_{0}({\cal Z})$ than 
$\|\cdot\|_{1}\in \Gamma_{0}({\mathbb R}^{n})$.
	\item[(b)] (On Q2)  In \cite{Selesnick2017-sparse} specially for $({\cal X},{\cal Z},\Psi,{\mathfrak L}) = ({\mathbb R}^{n},{\mathbb R}^{n},\|\cdot\|_{1},{\rm Id})$,  
	a sufficient condition is found for $B$ and $\mu$ to ensure the convexity of $J_{(\|\cdot\|_{1})_B \circ {\rm Id}}$.   We will see in Remark \ref{rem-prop:exJ_cond} that this sufficient condition is  indeed a necessary and sufficient condition to ensure the convexity of $J_{(\|\cdot\|_{1})_B \circ {\rm Id}}$. 
We consider for general $({\cal X},{\cal Z},\Psi,{\mathfrak L})$ the overall convexity condition of \eqref{intro:involveL'}.  
	\item[(c)] (On Q3)  Any iterative algorithm applicable, under fully general overall-convexity conditions, does not seem to have been reported yet 
	even for $({\cal X},{\cal Z},\Psi,{\mathfrak L}) = ({\mathbb R}^{n},{\mathbb R}^{n},\|\cdot\|_{1},{\rm Id})$. 
As imaginable by the significant effort in the art of proximal splitting  \cite{chambolle05,Combettes-Wajs2005,Bauschke2011-convex,Combettes-Pesquet2011,Condat2013-primal_dual,Vu2013-splitting,Combettes2015-Compositions,yamagishi17:_nonex_lagran,Yamada2017-SVM} for minimizing sum of nonsmooth convex functions, 
	it is not trivial to establish algorithm for (\ref{intro:involveL'})  due to the nonconvexity of $\Psi_B$ for general $({\cal X},{\cal Z},\Psi,{\mathfrak L})$ even under the overall convexity condition. 
	\item[(d)] (On Q4)  For practical applications, it is important to establish a flexible way to design $B$ and $\mu$ under the convexity of $J_{\Psi_B \circ {\mathfrak L}}$.

The GMC penalties in the form of  (\ref{intro:involveL'})  with   
$({\cal X},{\cal Z},\Psi)$ have already been reported (see, e.g., \cite{Zhong2018-novel,du2018minmax}).  However these reports do not present any mathematical analysis related to the above key questions (Q1)-(Q4). 
\end{remark}

This paper considers the questions (Q1)-(Q4) and presents a proximal splitting algorithm for problem \eqref{intro:involveL'} with \eqref{Moreau-E-penalty} under as much general overall-convexity condition for $(A, B, {\mathfrak L}, \mu)$ as possible.  After the preliminary section including short reviews on (i)
the elements of convex analysis and optimization and (ii) fixed point theory of nonexpansive operators, we will present in Proposition~\ref{prop:exJ_cond} useful conditions for the overall convexity of $J_{\Psi_B \circ {\mathfrak L}}$ in \eqref{intro:involveL'}. 
Under the overall convexity condition, we next propose a proximal splitting algorithm (Algorithm 1) for problem \eqref{intro:involveL'}.
The proposed algorithm has theoretical guarantee of convergence to a global minimizer of \eqref{intro:involveL'} (see Theorem~\ref{def:Tprop} in Section~\ref{sec:algorithm}) 
and is designed in a way similar to an idea behind the primal-dual splitting method \cite{Vu2013-splitting,Condat2013-primal_dual,Ono2015-hierarchical} which was established specially for minimization of sum of linearly involved convex terms.
Furthermore, we also present 
a flexible way to design $B$ and $\mu$ in Proposition~\ref{rem:selectionB} for  the convexity of $J_{\Psi_B \circ {\mathfrak L}}$. 
To demonstrate the effectiveness of the proposed algorithm, we present numerical experiments in four different sparsity-rank-aware signal processing scenarios.

Preliminary short versions of this paper were presented at conferences \cite{abe19:_convex_edge_preser_signal_recov,yamada19:_global}.

\section{Preliminaries}
Let $\bbN$, $\bbR$, $\bbR_+$, and $\bbR_{++}$ be the sets of natural numbers, real numbers, nonnegative real numbers, and positive real numbers, respectively.
The superscript $(\cdot)^{\top}$ denotes transpose.
For a vector $\bmx := (x_1, x_2, \dots, x_n) \in \bbR^{n}$, we use $\|\bmx\|_p := (\sum_{i=1}^{n} |x_i|^{p} )^{1/p} \ (0 < p < \infty)$, $\|\bmx\|_{\infty} := \max\{|x_1|, \dots, |x_n| \}$, and $\|\bmx\|_{0}:=\#\{i\in \mathbb{N}\cap[1,n]\mid x_i \not = 0\}$.
$\bmzero_n \in \bbR^n$ stands for the zero vector.
In Section~\ref{sec:elem_convex} and Section~\ref{sec:elem_nonexp}, we use finite dimensional real Hilbert spaces $(\Hlb,\langle \cdot, \cdot \rangle_{\Hlb}, \| \cdot \|_{\Hlb})$ and $(\mathcal{K},\langle \cdot, \cdot \rangle_{\mathcal{K}}, \| \cdot \|_{\mathcal{K}})$.
For $S \subset \Hlb$, $\operatorname{cone}(S)$ denotes the conical hull (see, e.g., \cite[Def. 6.1]{Bauschke2011-convex}) of $S$ and $\operatorname{span}(S)$ the span of $S$. $\mathcal{B} (\Hlb, \mathcal{K})$ denotes the set of all bounded linear operators\footnote{
  In real finite dimensional Hilbert space, $\mathcal{B} (\Hlb, \mathcal{K})$ is identical to the set of all linear operators.
} from $(\Hlb, \langle \cdot, \cdot \rangle_{\Hlb}, \| \cdot \|_{\Hlb})$ to $(\mathcal{K}, \langle \cdot, \cdot \rangle_{\mathcal{K}}, \| \cdot \|_{\mathcal{K}})$.
For $L \in \mathcal{B}(\Hlb, \mathcal{K})$, we use $\|L\|_{\rm op}:=\sup_{x \in \Hlb\colon \|x\|_{\Hlb} \leq 1}\|Lx\|_{\mathcal{K}}$.
For $L \in \mathcal{B}(\Hlb, \mathcal{K})$, $L^{\ast} \in \mathcal{B}(\mathcal{K}, \Hlb)$ denotes the adjoint of $L$, i.e., $\langle Lx, y \rangle_{\mathcal{K}}= \langle x, L^*y \rangle_{\Hlb} \ (\forall (x,y) \in \mathcal{H} \times \mathcal{K})$. We also use $\rmId$ to denote the identity operator for general Hilbert spaces.
$\rmO_{\mathcal{B}(\Hlb, \mathcal{K})} \in \mathcal{B}(\Hlb, \mathcal{K})$ and $\rmO_{\Hlb} \in \mathcal{B}(\Hlb, \Hlb)$ stand for the zero operators.
For $L \in \mathcal{B}(\Hlb, \mathcal{K})$, $L^{\dagger} \in \mathcal{B}(\mathcal{K}, \Hlb)$ stands for the Moore-Penrose pseudo inverse of $L$, $\ran(L) := \{ L\bmx \in \mathcal{K} \mid \bmx \in \Hlb \}$ and $\Null(L) := \{ \bmx \in \Hlb \mid L\bmx = \bmzero \}$ denote respectively the range and the null spaces of $L$.
The positive definiteness and positive semidefiniteness of a self-adjoint operator $L \in \mathcal{B}(\Hlb, \Hlb)$ are expressed respectively as $L \succ \rmO_{\Hlb}$ and $L \succeq \rmO_{\Hlb}$.
For $L \succeq \rmO_{\Hlb}$, $\rho(L)$ denotes the maximum eigenvalue of $L$. 
For any $L \succ \rmO_{\Hlb}$, by defining an inner product $\langle \cdot, \cdot \rangle_{L} : \Hlb \times \Hlb \to \bbR\colon (\bmx, \bmy) \mapsto  \langle \bmx, L \bmy \rangle_{\Hlb}$ and its induced norm $\|\bmx\|_{L} := \sqrt{\langle \bmx, \bmx \rangle_{L}}$, $(\Hlb, \langle \cdot, \cdot \rangle_{L}, \|\bmx\|_{L})$ becomes a real Hilbert space. 

Note that, in any real finite dimensional space, a linear operator can be expressed with matrix multiplication and identified with a matrix. We use $\rmI_n \in \mathbb{R}^{n \times n}$ to denote the identity matrix for $\mathbb{R}^n$. $\rmO_{m,n} \in \mathbb{R}^{m \times n}$ and $\rmO_{n} \in \mathbb{R}^{n \times n}$ stand for the zero matrices.

\subsection{Selected elements of convex analysis and optimization}
\label{sec:elem_convex}

The class of proper lower semicontinuous convex functions $f \colon \Hlb \to (-\infty, \infty]$, i.e., $f$ is convex function whose lower level set $\{x \in \Hlb \mid f(x) \leq \alpha\}$ is closed for every $\alpha \in \mathbb{R}$ and $\dom (f) := \{ x \in \Hlb \mid f(x) < \infty \} \neq \varnothing$, is denoted by $\Gamma_0 (\Hlb)$. For convex $C \subset \mathcal{H}$, the relative interior of $C$ is $\operatorname{ri} C:=\left\{x \in \mathcal{H} \mid \operatorname{cone}(C-x) = \operatorname{span}(C-x) \right\}$ (see, e.g., \cite[Def. 6.9]{Bauschke2011-convex}).

\noindent\textbf{(Subdifferential)}
For a function $f \in \Gamma_0(\Hlb)$, the \emph{subdifferential} of $f$ is defined as the set valued operator
\begin{equation*}
\partial f \colon \Hlb \to 2^{\Hlb} \colon x \mapsto \{ u \in \Hlb \mid \langle y - x, u \rangle_{\Hlb} + f(x) \leq f(y), \ \forall y \in \Hlb \}.
\end{equation*}
	
\noindent Subdifferential has the following properties:
\begin{enumerate}[(a)]
	\item (Fermat's rule \cite[Theorem~16.3]{Bauschke2011-convex}) Let $f \in \Gamma_0(\Hlb)$ and $\bar{x} \in \Hlb$. Then
	\begin{eqnarray}
		\label{eq:Fermat}
		\bar{x} \in \argmin_{x \in \Hlb} f(x) \IFF 0 \in \partial f(\bar{x}).
	\end{eqnarray}
        \item (Sum rule \cite[Corollary~16.48]{Bauschke2011-convex})  Let $f, g \in \Gamma_0(\Hlb)$ with $\dom(g) = \Hlb$. Then
	\begin{align}
		\label{eq:sumrule}
		\partial (f+g) = \partial f+ \partial g.
	\end{align}        
      \item (Chain rule \cite[Corollary~16.53, Fact 6.14(i), Sec. 6.2]{Bauschke2011-convex})        
        Let $g \in \Gamma_0(\Hlb)$
        and  $L \in \mathcal{B}(\Hlb, \mathcal{K})$ satisfy $0_{\Hlb} \in \operatorname{ri}\left(\operatorname{dom}(g) - \ran L \right)$. Then
	\begin{align}
		\label{eq:chainrule}
		\partial (g \circ L) = L^{\ast} \circ (\partial g) \circ L.
	\end{align}
	\item (\cite[Proposition~17.31]{Bauschke2011-convex}) Let $f \in \Gamma_0(\Hlb)$, let $x \in \dom(f)$, and suppose that $f$ is (G{\^{a}}teaux) differentiable at $x$. Then $\partial f(x) = \{ \nabla f(x) \}$.
\end{enumerate}


\noindent\textbf{(Legendre-Fenchel conjugate)}
For any $f \in \Gamma_0(\Hlb)$, the function defined by
\begin{equation*}
	f^{\ast} \colon \Hlb \to (-\infty, \infty] \colon y \mapsto \sup_{x \in \Hlb} \{ \langle x, y \rangle_{\Hlb} - f(x) \}
\end{equation*}
satisfies $f^{\ast} \in \Gamma_0(\Hlb)$. This function is called the \emph{conjugate} (also named \emph{Legendre-Fenchel conjugate}) of $f$.
Let $f \in \Gamma_0 (\Hlb)$. Then, for any $(x, u) \in \Hlb \times \Hlb$,
\begin{align}
\label{eq:subdifferentialinversion}
u \in \partial f(x) \IFF x \in \partial f^{\ast} (u).
\end{align}

\subsection{Selected elements of fixed point theory of nonexpansive operators}
\label{sec:elem_nonexp}
\noindent\textbf{(Nonexpansive operator)}
An operator $T \colon \Hlb \to \Hlb$ is said to be $\kappa$-\emph{Lipschitzian} with constant $\kappa > 0$ if
\begin{equation*}
(\forall x, y \in \Hlb) \quad \| T(x) - T(y) \|_{\Hlb} \leq \kappa \| x - y \|_{\Hlb}.
\end{equation*}
In particular, an operator $T \colon \Hlb \to \Hlb$ is said to be \emph{nonexpansive} if it is 1-Lipschitzian, i.e.,
\begin{equation*}
(\forall x, y \in \Hlb) \quad \| T(x) - T(y) \|_{\Hlb} \leq \| x - y \|_{\Hlb}.
\end{equation*}
For $\alpha \in (0, 1)$, a nonexpansive operator $T$ is called \emph{$\alpha$-averaged} if there exists a nonexpansive operator $\widehat{T} \colon \Hlb \to \Hlb$ such that
\begin{equation*}
	T = (1 - \alpha) \rmId + \alpha \widehat{T},
\end{equation*}
i.e., $T$ is a convex combination of the identity operator $\rmId$ and some nonexpansive operator $\widehat{T}$.

\begin{fact}[Compositions of averaged nonexpansive operators {\cite{ogura02:_non}\cite[Proposition~2.4]{Combettes2015-Compositions}}]
  \label{fact:averaged}
	Suppose that each $T_i \colon \Hlb \to \Hlb$ $(i = 1,2)$ is $\alpha_i$-averaged nonexpansive for some $\alpha_i \in (0,1)$. Then $T_1 \circ T_2$ is $\alpha$-averaged nonexpansive for $\alpha := \frac{\alpha_1 + \alpha_2 - 2\alpha_1\alpha_2}{1-\alpha_1\alpha_2} \in (0,1)$.
\end{fact}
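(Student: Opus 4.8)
The plan is to bypass the decomposition $(1-\alpha)\rmId + \alpha\widehat{T}$ directly and instead work through the equivalent quadratic characterization of averagedness, which reduces the composition to pure algebra on squared norms. First I would record the identity that, for $\alpha\in(0,1)$, an operator $T\colon\Hlb\to\Hlb$ is $\alpha$-averaged if and only if
\[
\|T(x)-T(y)\|_{\Hlb}^2 \leq \|x-y\|_{\Hlb}^2 - \frac{1-\alpha}{\alpha}\,\|(\rmId-T)(x)-(\rmId-T)(y)\|_{\Hlb}^2 \qquad (\forall x,y\in\Hlb).
\]
This follows by setting $\widehat{T} := (1-\tfrac{1}{\alpha})\rmId + \tfrac{1}{\alpha}T$, observing that $T$ is $\alpha$-averaged exactly when $\widehat{T}$ is nonexpansive, and expanding $\|\widehat{T}(x)-\widehat{T}(y)\|_{\Hlb}^2$ via the convex-combination identity $\|\lambda u+\mu v\|_{\Hlb}^2 = \lambda\|u\|_{\Hlb}^2 + \mu\|v\|_{\Hlb}^2 - \lambda\mu\|u-v\|_{\Hlb}^2$, which holds whenever $\lambda+\mu=1$, including the present case $\lambda = 1-\tfrac{1}{\alpha}<0$.

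Next I would apply this characterization to $T_1$ and $T_2$ along the composition. Fixing $x,y\in\Hlb$ and abbreviating $p:=x-y$, $q:=T_2(x)-T_2(y)$, and $r:=T_1(T_2(x))-T_1(T_2(y))$, the two hypotheses read $\|q\|_{\Hlb}^2 \leq \|p\|_{\Hlb}^2 - \beta_2\|p-q\|_{\Hlb}^2$ and $\|r\|_{\Hlb}^2 \leq \|q\|_{\Hlb}^2 - \beta_1\|q-r\|_{\Hlb}^2$, where $\beta_i := (1-\alpha_i)/\alpha_i > 0$. Chaining them gives
\[
\|r\|_{\Hlb}^2 \leq \|p\|_{\Hlb}^2 - \beta_2\|p-q\|_{\Hlb}^2 - \beta_1\|q-r\|_{\Hlb}^2 .
\]
Since $(\rmId-T_1T_2)(x)-(\rmId-T_1T_2)(y) = p-r$, applying the same characterization to $T_1\circ T_2$ shows it suffices to verify that the stated $\alpha$ satisfies $\tfrac{1-\alpha}{\alpha} = \tfrac{\beta_1\beta_2}{\beta_1+\beta_2}$ and that $\beta_2\|p-q\|_{\Hlb}^2 + \beta_1\|q-r\|_{\Hlb}^2 \geq \tfrac{\beta_1\beta_2}{\beta_1+\beta_2}\|p-r\|_{\Hlb}^2$.

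Finally, writing $u:=p-q$ and $v:=q-r$ so that $p-r=u+v$, the remaining inequality $\beta_2\|u\|_{\Hlb}^2 + \beta_1\|v\|_{\Hlb}^2 \geq \tfrac{\beta_1\beta_2}{\beta_1+\beta_2}\|u+v\|_{\Hlb}^2$, after multiplying through by $\beta_1+\beta_2$, becomes precisely $\|\beta_2 u - \beta_1 v\|_{\Hlb}^2 \geq 0$, which is trivially true. The identity $\tfrac{1-\alpha}{\alpha}=\tfrac{\beta_1\beta_2}{\beta_1+\beta_2}$ is a direct computation starting from $1-\alpha = \tfrac{(1-\alpha_1)(1-\alpha_2)}{1-\alpha_1\alpha_2}$, and the same computation certifies $\alpha\in(0,1)$, since both $\alpha_1+\alpha_2-2\alpha_1\alpha_2 = \alpha_1(1-\alpha_2)+\alpha_2(1-\alpha_1)$ and $1-\alpha_1\alpha_2$ are strictly positive. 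I expect the only genuine obstacle to be identifying the correct constant: recognizing that the harmonic-mean-type weight $\tfrac{\beta_1\beta_2}{\beta_1+\beta_2}$ is exactly what collapses the two deficiency terms into the single perfect square $\|\beta_2 u - \beta_1 v\|_{\Hlb}^2$. Once that weight is guessed, every remaining step is elementary.
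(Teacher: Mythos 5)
Note first that the paper itself offers no proof of this statement: it is imported verbatim as a Fact, with the argument deferred to the cited references (Ogura--Yamada and Combettes--Yamada, Proposition~2.4). So the comparison can only be against the standard literature proof, and your proposal is, in essence, exactly that proof: it is correct and complete. The quadratic characterization you start from (that $T$ is $\alpha$-averaged iff $\|T(x)-T(y)\|_{\Hlb}^2 \leq \|x-y\|_{\Hlb}^2 - \tfrac{1-\alpha}{\alpha}\|(\rmId-T)(x)-(\rmId-T)(y)\|_{\Hlb}^2$ for all $x,y$) is the key lemma in the cited works, and your derivation of it is sound --- in particular you correctly flag that the identity $\|\lambda u+\mu v\|_{\Hlb}^2=\lambda\|u\|_{\Hlb}^2+\mu\|v\|_{\Hlb}^2-\lambda\mu\|u-v\|_{\Hlb}^2$ holds for \emph{all} real $\lambda+\mu=1$, so applying it with $\lambda=1-\tfrac{1}{\alpha}<0$ is legitimate, and the chain of equivalences gives you both directions of the characterization (necessity for $T_1,T_2$; sufficiency for $T_1\circ T_2$). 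The two closing computations also check out: $1-\alpha=\tfrac{(1-\alpha_1)(1-\alpha_2)}{1-\alpha_1\alpha_2}$ yields $\tfrac{1-\alpha}{\alpha}=\tfrac{(1-\alpha_1)(1-\alpha_2)}{\alpha_1+\alpha_2-2\alpha_1\alpha_2}=\tfrac{\beta_1\beta_2}{\beta_1+\beta_2}$, the positivity of $\alpha_1(1-\alpha_2)+\alpha_2(1-\alpha_1)$ and of $1-\alpha_1\alpha_2$ gives $\alpha\in(0,1)$, and multiplying the target inequality by $\beta_1+\beta_2$ leaves precisely $\|\beta_2 u-\beta_1 v\|_{\Hlb}^2\geq 0$. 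No gaps.
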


\begin{fact}[Krasnosel'ski{\u{\i}}-Mann iteration for finding a fixed point of averaged nonexpansive operator {\cite[Section~5.2]{Bauschke2011-convex}\cite{Groetsch1972-mann}}]
	\label{fact:KM}
	For a nonexpansive operator $T \colon \Hlb \to \Hlb$ with $\Fix(T) := \{ x \in \Hlb \mid T(x) = x \} \neq \varnothing$ and any initial point $x_0 \in \Hlb$, the sequence $(x_k)_{k \in \bbN} \subset \Hlb$ generated by
	\begin{equation}
		x_{k+1} = [(1-\alpha_k) \rmId + \alpha_k T] (x_k)
		\label{eq:Mann_proc}
	\end{equation}
	converges weakly to a point in $\Fix(T)$ if $(\alpha_k)_{k \in \bbN} \subset [0,1]$ satisfies $\sum_{k \in \bbN} \alpha_k(1-\alpha_k) = \infty$.
	In particular, if $T$ is $\alpha$-averaged for some $\alpha \in (0,1)$, a simple iteration
	\begin{equation*}
		x_{k+1} = T(x_{k})
	\end{equation*}
	converges weakly to a point in $\Fix(T)$.	
\end{fact}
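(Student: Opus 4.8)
The plan is to run the classical three-step argument for the Krasnosel'ski{\u{\i}}--Mann scheme: establish Fej\'er monotonicity, upgrade it to asymptotic regularity, and then identify weak cluster points as fixed points. Throughout I would fix an arbitrary $z \in \Fix(T)$ and abbreviate the displacement $r_k := \|x_k - T(x_k)\|_{\Hlb}$. The cornerstone is a single quadratic estimate. Writing the iterate \eqref{eq:Mann_proc} as the convex combination $x_{k+1} = (1-\alpha_k)x_k + \alpha_k T(x_k)$ and applying the identity $\|(1-\lambda)u + \lambda v\|_{\Hlb}^2 = (1-\lambda)\|u\|_{\Hlb}^2 + \lambda\|v\|_{\Hlb}^2 - \lambda(1-\lambda)\|u-v\|_{\Hlb}^2$ with $u = x_k - z$, $v = T(x_k) - T(z)$, and $\lambda = \alpha_k$, the nonexpansiveness bound $\|T(x_k)-T(z)\|_{\Hlb} \le \|x_k - z\|_{\Hlb}$ yields
\begin{equation}
\|x_{k+1} - z\|_{\Hlb}^2 \le \|x_k - z\|_{\Hlb}^2 - \alpha_k(1-\alpha_k)\, r_k^2 .
\label{eq:fejerdrop}
\end{equation}

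From \eqref{eq:fejerdrop} I would read off that $(\|x_k - z\|_{\Hlb})_{k\in\bbN}$ is nonincreasing, hence convergent, that $(x_k)_{k\in\bbN}$ is bounded, and, after telescoping, that $\sum_{k\in\bbN}\alpha_k(1-\alpha_k)r_k^2 \le \|x_0 - z\|_{\Hlb}^2 < \infty$. The next step is to promote this to the full limit $r_k \to 0$. Here I would use the decomposition $x_{k+1} - T(x_{k+1}) = (1-\alpha_k)\bigl(x_k - T(x_k)\bigr) + \bigl(T(x_k) - T(x_{k+1})\bigr)$ together with $\|T(x_k)-T(x_{k+1})\|_{\Hlb} \le \|x_k - x_{k+1}\|_{\Hlb} = \alpha_k r_k$ to obtain $r_{k+1}\le r_k$, so $(r_k)_{k\in\bbN}$ is nonincreasing and converges to some $r \ge 0$. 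If $r>0$, then $r_k \ge r$ for all $k$ and the summability above forces $r^2\sum_{k}\alpha_k(1-\alpha_k) < \infty$, contradicting the hypothesis $\sum_{k}\alpha_k(1-\alpha_k) = \infty$; hence $r=0$ and $\|x_k - T(x_k)\|_{\Hlb}\to 0$. I expect this monotonicity step to be the main obstacle, since the bare telescoped bound only yields $\liminf_k r_k = 0$, and it is the nonexpansive drift estimate for $r_k$ that rules out oscillation.

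Finally I would conclude convergence. Since $\rmId - T$ is demiclosed at $0$ (Browder's demiclosedness principle), every weak cluster point $\bar x$ of the bounded sequence $(x_k)$ satisfies $(\rmId - T)\bar x = 0$, i.e.\ $\bar x \in \Fix(T)$; in the present finite dimensional setting this is immediate, because $T$ is continuous and weak and strong convergence coincide, so any subsequential limit of $(x_k)$ is fixed by $T$. Because $\lim_k\|x_k - w\|_{\Hlb}$ exists for every $w \in \Fix(T)$ by the first step, an Opial-type argument forces the weak cluster point to be unique: taking $x_{k_j}\to\bar x\in\Fix(T)$ along a subsequence, the limit $\lim_k\|x_k-\bar x\|_{\Hlb}$ equals $\lim_j\|x_{k_j}-\bar x\|_{\Hlb}=0$, so the whole sequence converges (weakly, and here strongly) to $\bar x \in \Fix(T)$.

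For the ``in particular'' claim, I would simply specialize the first part. If $T = (1-\alpha)\rmId + \alpha\widehat T$ is $\alpha$-averaged with $\widehat T$ nonexpansive, then $T(x)=x \IFF \widehat T(x)=x$, so $\Fix(\widehat T) = \Fix(T) \neq \varnothing$, and the simple iteration $x_{k+1}=T(x_k)$ is exactly \eqref{eq:Mann_proc} applied to $\widehat T$ with the constant choice $\alpha_k \equiv \alpha \in (0,1)$. Since $\sum_{k\in\bbN}\alpha(1-\alpha) = \infty$, the already established statement applies to $\widehat T$ and gives weak convergence of $(x_k)$ to a point of $\Fix(\widehat T) = \Fix(T)$, as required.
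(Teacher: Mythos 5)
Your proof is correct, and it is essentially the classical argument behind this statement: the paper itself gives no proof, stating it as a known Fact with citations to Groetsch and to Section~5.2 of Bauschke--Combettes, and your three-step route (Fej\'er monotonicity via the convex-combination identity, asymptotic regularity via the telescoped sum combined with the monotone decrease of $\|x_k - T(x_k)\|_{\Hlb}$, then demiclosedness plus the Opial-type uniqueness argument) is exactly the standard proof found in those references. Your handling of the ``in particular'' clause --- rewriting the $\alpha$-averaged iteration as the Mann scheme for $\widehat{T}$ with $\alpha_k \equiv \alpha$ and noting $\Fix(T) = \Fix(\widehat{T})$ --- is also the intended specialization.
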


\noindent\textbf{(Monotone operator)}
A set-valued operator $T \colon \Hlb \to 2^{\Hlb}$ is said to be \emph{monotone} if
\begin{equation*}
	(\forall (x,u) \in \gra(T))(\forall (x',u') \in \gra(T)) \quad \langle x - x', u - u' \rangle_{\Hlb} \geq 0,
\end{equation*}
where $\gra(T) := \{ (x, u) \in \Hlb \times \Hlb \mid u \in T(x) \}$ is the graph of $T$.
In particular, $T$ is called \emph{maximally monotone} if, for every $(x, u) \in \Hlb \times \Hlb$,
\begin{equation*}
(x, u) \in \gra(T) \IFF (\forall (x', u') \in \gra(T)) \quad \langle x - x', u - u' \rangle_{\Hlb} \geq 0.
\end{equation*}
For a given $f \in \Gamma_0(\Hlb)$, $\partial f \colon \Hlb \to 2^{\Hlb}$ is maximally monotone. Furthermore, $T \colon \Hlb \to 2^{\Hlb}$ is maximally monotone if and only if the \emph{resolvent} $R_T := (\rmId + T)^{-1} \colon \Hlb \to 2^{\Hlb} \colon u \mapsto \{ x \in \Hlb \mid u \in x + T(x)\}$ is single-valued $(1/2)$-averaged nonexpansive operator.

\noindent\textbf{(Proximity operator)}
The \emph{proximity operator} of $f \in \Gamma_0(\Hlb)$ is defined by
\begin{equation*}
\Prox_{f} \colon \Hlb \to \Hlb : x \mapsto \argmin_{y \in \Hlb} \left[ f(y) + \frac{1}{2} \|x - y \|_{\Hlb}^2  \right].
\end{equation*}
Note that $\Prox_f (x) \in \Hlb$ is well-defined for all $x \in \Hlb$ due to the coercivity and the strict convexity of $f(\cdot) + \frac{1}{2}\| x - \cdot \|_{\Hlb}^2 \in \Gamma_0(\Hlb)$.
It is also well known that $\Prox_{f}$ is nothing but the resolvent of $\partial f$, i.e., $\Prox_{f} = (\rmId + \partial f)^{-1} = R_{\partial f}$, which implies that
\begin{align}
&\bar{x} \in \Fix(\Prox_f) \IFF \Prox_{f}(\bar{x}) = \bar{x} \IFF (\rmId + \partial f)^{-1} (\bar{x}) = \bar{x} \\
&\IFF \bar{x} \in (\rmId + \partial f)(\bar{x}) \IFF 0 \in \partial f(\bar{x}) \IFF \bar{x} \in \argmin_{x \in \Hlb} f(x).
\end{align}
 The proximity operator of $\Psi^*$ can be expressed as
  $\Prox_{ \Psi^*}= \rmId -  \Prox_{\Psi} $ (see e.g. \cite[Theorem 14.3(ii)]{Bauschke2011-convex}).

\noindent\textbf{(Moreau envelope)}
For $f \in \Gamma_0(\Hlb)$,
\begin{equation}
\label{eq:Moreauenvelope}
{}^{\gamma}\! f \colon \Hlb \to \bbR \colon x \mapsto \min_{y \in \Hlb} \left[ f(y) + \frac{1}{2\gamma} \| x - y \|_{\Hlb}^2  \right],
\end{equation}
is called the \emph{Moreau envelope} of $f$ of index $\gamma > 0$.
The Moreau envelope of $f \in \Gamma_0(\Hlb)$ converges pointwise to $f$ on $\dom(f)$ as $\gamma \downarrow 0$, i.e. $\lim_{\gamma \downarrow 0} {}^{\gamma}\! f(x) = f(x)$ for every $x \in \dom(f)$.
The function ${}^{\gamma}\! f$ is Fr{\'{e}}chet differentiable convex function with ($1/\gamma$)-Lipschitzian gradient
\begin{align}
\label{eq:gradientMoreauenvelope}
\nabla {}^{\gamma}\! f \colon \Hlb \to \Hlb \colon x \mapsto \frac{x - \Prox_{\gamma f}(x)}{\gamma}. 
\end{align}

\section{Linearly involved Generalized-Moreau-Enhanced (LiGME) model and proximal splitting algorithm}
In this section, after introducing LiGME model (see Definition \ref{def:GMC}), we then presents a proximal splitting type algorithm of guaranteed convergence to a globally optimal solution of the model under an overall convexity condition (see Theorem~\ref{def:Tprop}).

\subsection{Linearly involved Generalized-Moreau-Enhanced (LiGME) Model } %
We impose the relatively strong assumption $\dom{\Psi}=\mathcal{Z}$ for $\Psi$ in \eqref{Moreau-E-penalty}, to reduce technical complexity in the later discussion, although there would be many ways to relax. 
\begin{definition}[Linearly involved Generalized-Moreau-Enhanced (LiGME) Model]
  \label{def:GMC}
  Let $(\calX, \langle \cdot, \cdot \rangle_{\calX}, \|\cdot\|_{\calX})$,
  $(\calY, \langle \cdot, \cdot \rangle_{\calY}, \|\cdot\|_{\calY})$,
  $(\calZ, \langle \cdot, \cdot \rangle_{\calZ}, \|\cdot\|_{\calZ})$,
  and $(\widetilde{\calZ}, \langle \cdot, \cdot \rangle_{\widetilde{\calZ}}, \|\cdot\|_{\widetilde{\calZ}})$ be finite dimensional real Hilbert spaces, $\Psi \in \Gamma_0(\mathcal{Z})$ coercive with $\dom{\Psi}=\mathcal{Z}$, $B \in \mathcal{B}(\mathcal{Z}, \widetilde{\mathcal{Z}})$, $\mathfrak{L} \in \mathcal{B}(\mathcal{X},\mathcal{Z})$, and $(A,\mathfrak{L},\mu) \in \mathcal{B}(\mathcal{X}, \mathcal{Y}) \times \mathcal{B}(\mathcal{X}, \mathcal{Z}) \times \bbR_{+}$.   Then:
  \\
  (a) \emph{GME penalty function} $\Psi_B \in \Gamma_0(\mathcal{Z})$ is defined as
	\begin{equation}
	\Psi_B(\cdot) := \Psi( \cdot ) - \min_{v \in \mathcal{Z}} \left[ \Psi(v) + \frac{1}{2} \|B(\cdot - v)\|^2_{\widetilde{\mathcal{Z}}} \right].
	\label{eq:defGMC}
      \end{equation}
      (b) Linearly involved Generalized-Moreau-Enhanced (LiGME) penalty is defined as $\Psi_B \circ \mathfrak{L} \colon \mathcal{X} \to (-\infty, \infty]$. \\
      (c) LiGME model is defined as the minimization of
    \begin{align}
      \label{eq:J}
      J_{\Psi_B \circ \mathfrak{L}}\colon \mathcal{X} \to \mathbb{R}\colon x \mapsto \frac{1}{2} \| y - A x \|^2_{\mathcal{Y}} + \mu \Psi_B \circ \mathfrak{L}(x).
    \end{align}
      \end{definition}

\begin{example} \label{example1:LiGME}(LiGME penalty bridges the gap between the direct discrete measures and their convex envelopes)
 \begin{enumerate}
 \item[(a)] (Normalized MC penalty) 
By letting ${\cal X}={\cal Z}={\mathbb R}$, $\Psi=|\cdot|$, $\mathfrak{L}=1$,  $B=\frac{1}{\sqrt{\gamma}}$ for $\gamma\in {\mathbb R}_{++}$ and $\mu=\frac{2}{\gamma}$, the function $\mu\Psi_{B}\circ \mathfrak{L}$ in 
(\ref{eq:J}) reproduces 
 \begin{equation}
\frac{2}{\gamma} \left({}^{\gamma}|\cdot|_{\rm MC}\right):{\mathbb R}\rightarrow {\mathbb R}:
x\mapsto  
\left\{
\begin{array}{ll}
\frac{2}{\gamma}|x|-\frac{1}{\gamma^{2}}x^{2}, &\mbox{\rm if } |x|\leq {\gamma}\ ; \\
1,&\mbox{\rm otherwise,}
\end{array}
\right.
\label{normal-MC}
\end{equation}
which satisfies 
\begin{equation}
\lim_{\gamma \downarrow 0}\frac{2}{\gamma} \left({}^{\gamma}|x|_{\rm MC}\right)
=
\left\{
\begin{array}{ll}
0, &\mbox{\rm if } x=0\ ; \\
1,&\mbox{\rm otherwise.}
\end{array}
\right.
\label{lim-normal-MC}
\end{equation}
 \item[(b)] (LiGME penalty bridges the gap between $\|\cdot\|_0$ and $\|\cdot\|_1$ ) 
Let ${\cal X}={\cal Z}={\mathbb R}^{n}$, $\Psi=\|\cdot\|_{1}$, $\mathfrak{L}={\rm Id}$,  $B=\frac{1}{\sqrt{\gamma}}{\rm Id}$ for $\gamma\in {\mathbb R}_{++}$ and $\mu=\frac{2}{\gamma}$. 
Then the function $\mu\Psi_{B}\circ \mathfrak{L}$ in 
(\ref{eq:J}) reproduces 
 \begin{equation}
\frac{2}{\gamma} (\|\cdot\|_{1})_{\frac{1}{\sqrt{\gamma}}{\rm Id}}:
{\mathbb R}^{n}\rightarrow {\mathbb R}:
(x_{1},\ldots, x_{n})\mapsto  
\sum_{i=1}^{n}\frac{2}{\gamma} \left({}^{\gamma}|x_{i}|_{\rm MC}\right)
\label{n-normal-MC}
\end{equation}
which satisfies for $(x_{1},\ldots, x_{n})\in {\mathbb R}^{n}$
\begin{equation}
\lim_{\gamma \downarrow 0}
\frac{2}{\gamma} (\|\cdot\|_{1})_{\frac{1}{\sqrt{\gamma}}{\rm Id}}(x_{1},\ldots, x_{n})=\|(x_{1},\ldots, x_{n})\|_{0}.
\label{lim-normal-MC2}
\end{equation}
This fact together with 
$(\|\cdot\|_{1})_{\rmO_{m,n}}(x_{1},\ldots, x_{n})=\|(x_{1},\ldots, x_{n})\|_{1}$ 
validates that the LiGME penalty can serve as  a parametrized bridge between $\|\cdot\|_0$ and $\|\cdot\|_1$. 
 \item[(c)]  (LiGME penalty bridges the gap between ${\rm rank}(\cdot)$ and $\|\cdot\|_{\rm nuc}$) 
 Let ${\cal X}={\cal Z}={\mathbb R}^{m\times n}$, $\Psi=\|\cdot\|_{\rm nuc}$, $\mathfrak{L}={\rm Id}$,  $B=\frac{1}{\sqrt{\gamma}}{\rm Id}$ for $\gamma\in {\mathbb R}_{++}$ and $\mu=\frac{2}{\gamma}$, where  $\|\cdot\|_{\rm nuc}:{\mathbb R}^{m\times n}\rightarrow {\mathbb R}: X\mapsto \sum_{i=1}^{r}\sigma_{i}(X)$ with 
 $r={\rm rank}(X)$ and $i$-th largest singular value $\sigma_{i}(X)$ ($i=1,2,\ldots,r$) of $X$. 
 It is well-known that $\|\cdot\|_{\rm nuc}$ is a convex envelope of ${\rm rank}(\cdot)$, i.e.,
the largest convex minorant of ${\rm rank}(\cdot)$, in a vicinity of $\rmO_{m,n}$.
By \cite[Prop. 24.68]{Bauschke2011-convex}, the function $\mu\Psi_{B}\circ \mathfrak{L}$ in 
(\ref{eq:J}) reproduces 
\begin{equation}
\frac{2}{\gamma} (\|\cdot\|_{\rm nuc})_{\frac{1}{\sqrt{\gamma}}{\rm Id}}:
{\mathbb R}^{m\times n}\rightarrow {\mathbb R}:
X\mapsto  
\sum_{i=1}^{r}\frac{2}{\gamma} \left({}^{\gamma}|\sigma_{i}(X)|_{\rm MC}\right)
\label{n-normal-MC-rank}
\end{equation}
which satisfies for $X\in {\mathbb R}^{m\times n}$
\begin{equation}
\lim_{\gamma \downarrow 0}
\frac{2}{\gamma} (\|\cdot\|_{\rm nuc})_{\frac{1}{\sqrt{\gamma}}{\rm Id}}(X)=\|(\sigma_{1}(X),\ldots, \sigma_{r}(X))\|_{0}={\rm rank}(X).
\label{lim-normal-MC4}
\end{equation}
This fact together with 
$(\|\cdot\|_{\rm nuc})_{\rmO_{m,n}}(X)=\|X\|_{\rm nuc}$ 
validates that the LiGME penalty can serve as  a parametrized bridge between ${\rm rank}(\cdot)$ and $\|\cdot\|_{\rm nuc}$.  
\end{enumerate} 
 \end{example}     

\begin{example}
  \label{prop:productspce}(The sum of multiple LiGME penalties can be expressed as a single LiGME penalty on product space)
  Let $\mathcal{Z}_i, \widetilde{\mathcal{Z}}_i \ (i= 1,2,\ldots, \mathcal{M})$, 
  $\mathcal{Z}=\mathcal{Z}_1 \times \mathcal{Z}_2 \times \ldots \times \mathcal{Z}_{\mathcal{M}}$,
  and 
  $\widetilde{\mathcal{Z}}=\widetilde{\mathcal{Z}}_1 \times \widetilde{\mathcal{Z}}_2 \times \ldots \times \widetilde{\mathcal{Z}}_{\mathcal{M}}$
  be real Hilbert spaces. For coercive $\Psi^{\langle i\rangle} \in \Gamma_0(\mathcal{Z}_i)$ with $\dom{\Psi^{\langle i\rangle}}=\mathcal{Z}_i$, $B^{\langle i\rangle}  \in \mathcal{B}(\mathcal{Z}_i, \widetilde{\mathcal{Z}}_i)$ and $\mathfrak{L}_i \in \mathcal{B}(\mathcal{X}, \mathcal{Z}_i)$ $\ (i= 1,2,\ldots, \mathcal{M})$, let $\Psi:=\mu_1\Psi^{\langle 1\rangle}\oplus \mu_2\Psi^{\langle 2\rangle}\oplus \ldots \oplus \mu_{\mathcal{M}}\Psi^{\langle\mathcal{M}\rangle}$, $B\colon \calZ \to \widetilde{\calZ} \colon (z_1,\ldots,z_M)\mapsto \left(\sqrt{\mu_1}B^{\langle 1\rangle} z_1,\ldots, \sqrt{\mu_{\mathcal{M}}}B^{\langle \mathcal{M}\rangle}z_{\mathcal{M}}\right)$, and $\mathfrak{L}\colon\mathcal{X} \to \mathcal{Z} \colon x \mapsto (\mathfrak{L}_ix)_{i=1}^{\mathcal{M}}$. Then we have
  \begin{align}
    \label{eq:ex3PBL}
    \Psi_B \circ \mathfrak{L}=\sum_{i=1}^{\mathcal{M}} \mu_i (\Psi^{\langle i\rangle})_{B^{\langle i\rangle}}\circ \mathfrak{L}_i,
    \end{align}
where $
    (\Psi^{\langle i\rangle})_{B^{\langle i\rangle}}(\cdot) = \Psi^{\langle i\rangle}(\cdot)-\min_{v \in \mathcal{Z}_i}\left[
    \Psi^{\langle i\rangle}(v) + \frac{1}{2}\| B^{\langle i\rangle}(\cdot - v)\|_{\widetilde{\mathcal{Z}}_i}^2
    \right]$.
\end{example}

\begin{remark}
  The LS-CNC penalty function in \cite[Definition 2]{yin2019stable} is reproduced as an LiGME penalty by setting
  $\mathcal{X}=\mathcal{Z}=\mathbb{R}^{m \times n} \oplus \mathbb{R}^{m \times n}$, $\mathfrak{L}=\rmId$, and $\Psi=\Psi_1\oplus\Psi_2$ with $\Psi_1:=\alpha\|\cdot\|_{\rm nuc}$ and $\Psi_2:= \beta\|\cdot\|_1$ in \eqref{prod-space-g}, where $\alpha, \beta \geq 0$. Moreover, the LiGME penalty in Example~\ref{prop:productspce} can also be utilized to enhance the so-called morphologicl component analysis in \cite{Starck-Murtagh-Fadili15}.

\end{remark}



\begin{proposition}[Overall convexity condition for the LiGME model]
  \label{prop:exJ_cond}
  The GME penalty function $\Psi_B$ in Definition~\ref{def:GMC} has the following properties:
  \begin{enumerate}[(a)]
  \item[\rm (a)] $\Psi_B\circ \mathfrak{L}(x) = \Psi(\mathfrak{L}x) - \left[\Psi(0_{\mathcal{Z}})+\frac{1}{2}\|B\mathfrak{L}x\|_{\widetilde{\mathcal{Z}}}^2\right]$ if and only if $B^{\sfT}B\mathfrak{L}x \in \operatorname{argmin}(\Psi^*)$.
  \item[\rm (b)] Let $(A,\mathfrak{L},\mu) \in \mathcal{B}(\mathcal{X}, \mathcal{Y}) \times \mathcal{B}(\mathcal{X}, \mathcal{Z}) \times \bbR_{++}$. Then, for the three conditions (C$_1$) $A^{\sfT}A - \mu \mathfrak{L}^{\sfT}B^{\sfT}B\mathfrak{L} \succeq \rmO_{\mathcal{X}}$, (C$_2$)
    $J_{\Psi_B \circ \mathfrak{L}} \in \Gamma_0(\mathcal{X})$ for any $y \in \mathcal{Y}$,
    and (C$_3$) $J_{\Psi_B \circ \mathfrak{L}}^{(0)}:=\frac{1}{2}\|A\cdot\|_{\calY}^2+\mu \Psi_B\circ \mathfrak{L} \in \Gamma_0(\calX)$,
    the relation $(C_1) \Rightarrow (C_2) \IFF (C_3)$ holds.
  \end{enumerate}
  In particular, if $\Psi$ is a certain norm, say $\opnorm{\cdot}$, over the vector space $\mathcal{Z}$, these properties are enhanced as:
  \begin{enumerate}[(a)]
  \item[\rm (a')] $(\opnorm{\cdot})_B\circ \mathfrak{L}(x) = \opnorm{\mathfrak{L}x} - \frac{1}{2}\|B\mathfrak{L}x\|_{\widetilde{\mathcal{Z}}}^2$ if and only if $\opnorm{B^{\sfT}B\mathfrak{L}x}_* \leq 1$, where $\opnorm{\cdot}_*\colon \mathcal{Z} \to \mathbb{R}\colon v \mapsto \sup_{w \in \mathcal{Z}\colon \opnorm{w}\leq 1}|\langle w, v  \rangle|$ is the dual norm\footnote{
      See, e.g., \cite[Def. 5.4.12]{horn2012matrix}, \cite[Def. 2.10.3]{kreyszig1978introductory}, and \cite[Example 3.26]{Boyd-Vandenberghe04}.
      } of $\opnorm{\cdot}$ .
  \item[\rm (b')] The equivalence $(C_1) \IFF (C_2) \IFF (C_3)$ holds.
  \end{enumerate}
\end{proposition}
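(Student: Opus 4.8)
The plan is to reduce every claim to the first-order optimality characterizations and conjugacy facts collected in Section~\ref{sec:elem_convex}. For (a), I first observe that the asserted identity is equivalent to the statement that $v=0_{\calZ}$ attains the minimum defining $\Psi_B(\mathfrak{L}x)$: writing $h_x(v):=\Psi(v)+\tfrac{1}{2}\|B(\mathfrak{L}x-v)\|_{\widetilde{\calZ}}^2$, one always has $\min_{v}h_x(v)\le h_x(0_{\calZ})=\Psi(0_{\calZ})+\tfrac{1}{2}\|B\mathfrak{L}x\|_{\widetilde{\calZ}}^2$, with equality precisely when $0_{\calZ}\in\operatorname{argmin}_{v}h_x$. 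Since $h_x\in\Gamma_0(\calZ)$ is the sum of $\Psi$ and a differentiable quadratic of full domain, Fermat's rule \eqref{eq:Fermat} together with the sum rule \eqref{eq:sumrule} yields $0_{\calZ}\in\operatorname{argmin}_{v}h_x\IFF 0_{\calZ}\in\partial\Psi(0_{\calZ})-B^{\sfT}B\mathfrak{L}x\IFF B^{\sfT}B\mathfrak{L}x\in\partial\Psi(0_{\calZ})$. Finally the subdifferential inversion \eqref{eq:subdifferentialinversion} followed once more by Fermat's rule gives $B^{\sfT}B\mathfrak{L}x\in\partial\Psi(0_{\calZ})\IFF 0_{\calZ}\in\partial\Psi^*(B^{\sfT}B\mathfrak{L}x)\IFF B^{\sfT}B\mathfrak{L}x\in\operatorname{argmin}(\Psi^*)$, which is exactly (a).

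For (b), the crux is a saddle-type rewriting of $J_{\Psi_B\circ\mathfrak{L}}$. Replacing $-\min_{v}(\cdots)$ by $\max_{v}(-(\cdots))$ and absorbing the $v$-independent least-squares term into the maximization gives
\[ J_{\Psi_B\circ\mathfrak{L}}(x)=\mu\Psi(\mathfrak{L}x)+\max_{v\in\calZ}\left[\tfrac{1}{2}\|y-Ax\|_{\calY}^2-\tfrac{\mu}{2}\|B(\mathfrak{L}x-v)\|_{\widetilde{\calZ}}^2-\mu\Psi(v)\right]. \]
For each fixed $v$ the bracketed function of $x$ is a quadratic whose Hessian equals $A^{\sfT}A-\mu\mathfrak{L}^{\sfT}B^{\sfT}B\mathfrak{L}$, independently of $v$, so $(C_1)$ makes every such slice convex in $x$; a pointwise supremum of convex functions is convex, and adding the convex term $\mu\Psi(\mathfrak{L}x)$ preserves convexity, which establishes $(C_1)\Rightarrow(C_2)$ (properness and lower semicontinuity are immediate because $\Psi$ has full domain, whence $\Psi_B\circ\mathfrak{L}$ is finite-valued and continuous). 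For $(C_2)\IFF(C_3)$, I note that $J_{\Psi_B\circ\mathfrak{L}}^{(0)}$ is exactly $J_{\Psi_B\circ\mathfrak{L}}$ evaluated at $y=0_{\calY}$, whereas for a general $y$ one has $J_{\Psi_B\circ\mathfrak{L}}=J_{\Psi_B\circ\mathfrak{L}}^{(0)}+\tfrac{1}{2}\|y\|_{\calY}^2-\langle A^{\sfT}y,\cdot\rangle_{\calX}$, an affine perturbation; since adding an affine function preserves membership in $\Gamma_0(\calX)$, the two conditions coincide.

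When $\Psi=\opnorm{\cdot}$ is a norm, (a') follows from (a) as soon as $\operatorname{argmin}(\Psi^*)$ is identified: the conjugate of a norm is the indicator of the dual unit ball $\{u\in\calZ:\opnorm{u}_*\le 1\}$, whose set of minimizers is that whole ball, and $\opnorm{0_{\calZ}}=0$, so the condition $B^{\sfT}B\mathfrak{L}x\in\operatorname{argmin}(\Psi^*)$ becomes $\opnorm{B^{\sfT}B\mathfrak{L}x}_*\le 1$. The only new content of (b') is the converse $(C_3)\Rightarrow(C_1)$, which I establish by contraposition. If $(C_1)$ fails, there is $d\in\calX$ with $c:=\langle d,(A^{\sfT}A-\mu\mathfrak{L}^{\sfT}B^{\sfT}B\mathfrak{L})d\rangle_{\calX}<0$, and necessarily $\mathfrak{L}d\neq 0_{\calZ}$ since otherwise $c=\|Ad\|_{\calY}^2\ge 0$. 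For small $t>0$ we have $\opnorm{B^{\sfT}B\mathfrak{L}(td)}_*=t\,\opnorm{B^{\sfT}B\mathfrak{L}d}_*\le 1$, so (a') applies along the ray $x=td$ and $J_{\Psi_B\circ\mathfrak{L}}^{(0)}(td)=\tfrac{c}{2}t^2+\mu t\opnorm{\mathfrak{L}d}$ on some interval $(0,\varepsilon)$; with $c<0$ this is a downward parabola, strictly concave there, so $J_{\Psi_B\circ\mathfrak{L}}^{(0)}\notin\Gamma_0(\calX)$ and $(C_3)$ fails. Together with (b) this yields $(C_1)\IFF(C_2)\IFF(C_3)$.

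I expect the main obstacle to be the discovery and justification of the max-reformulation underlying (b): recognizing that, after exchanging the minus sign with the minimization, the quadratic Moreau term contributes the single, $v$-independent Hessian $-\mu\mathfrak{L}^{\sfT}B^{\sfT}B\mathfrak{L}$ to every slice is exactly what singles out $(C_1)$ as the natural sufficient condition. The upgrade to necessity in (b') then rests on the positive homogeneity of the norm, which guarantees that the exact quadratic expansion from (a') holds on a full one-sided neighbourhood of the origin along any ray, exposing the forbidden concavity whenever $(C_1)$ is violated.
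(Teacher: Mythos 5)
Your proposal is correct and takes essentially the same route as the paper's proof: (a) via Fermat's rule, the sum rule, and subdifferential inversion; (b) by rewriting $-\mu\min_{v}(\cdot)$ as a supremum whose $x$-slices all carry the common curvature $A^{\sfT}A-\mu\mathfrak{L}^{\sfT}B^{\sfT}B\mathfrak{L}$ (the paper extracts this quadratic once so that its slices $\psi_v$ are affine in $x$, while you keep it inside each slice --- a cosmetic difference), with $(C_2)\IFF(C_3)$ from the affine perturbation in $y$; and (a')/(b') by identifying $\opnorm{\cdot}^{*}$ with the indicator of the dual unit ball and proving $(C_3)\Rightarrow(C_1)$ by contraposition. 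Your version of the contraposition --- strict concavity of $t\mapsto\frac{c}{2}t^{2}+\mu t\opnorm{\mathfrak{L}d}$ on an interval $(0,\varepsilon)$ where (a') applies by positive homogeneity --- is interchangeable with the paper's single midpoint-convexity violation at the normalized point $\bar{x}$ with $\opnorm{B^{\sfT}B\mathfrak{L}\bar{x}}_{*}=1$, and your observation that $c<0$ forces $B^{\sfT}B\mathfrak{L}d\neq 0_{\mathcal{Z}}$ (so the small-$t$ window exists) correctly handles the one point where care is needed.
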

\begin{proof}
  See \ref{app:prop1}.
\end{proof}


\begin{remark}\label{rem-prop:exJ_cond}
  (i) Proposition~\ref{prop:exJ_cond}(a') for special case $(\mathcal{X},\mathcal{Z}, \opnorm{\cdot},\mathfrak{L})=(\mathbb{R}^n, \mathbb{R}^n, \|\cdot\|_1, \rmId)$ reproduces {\cite[Corollary~2]{Selesnick2017-sparse}} (i.e. $(\|x\|_1)_B = \|\bmx\|_1 - \frac{1}{2}\|Bx\|_{\widetilde{\mathcal{Z}}}^2$ if and only if $\|B^{\sfT}Bx\|_{\infty} \leq 1$) because the dual norm of $\|\cdot \|_1$ is $\|\cdot\|_{\infty}$. For the whole shape of the graph of $(\|\cdot\|_1)_B$,
  see the graphs in \cite[Figs. 3, 8, and 9]{Selesnick2017-sparse} of the GMC penalty.  \\
  (ii) Proposition~\ref{prop:exJ_cond}(b') specialized for $\mu >0$ and $A =\rmO_{\mathcal{B}(\mathcal{X}, \mathcal{Y})}$ yields
  \begin{equation}
    \label{eq:AO}
  B = \rmO_{\mathcal{B}(\mathcal{Z}, \widetilde{\mathcal{Z}})} \left(\IFF - B^{\sfT}B \succeq \rmO_{\mathcal{Z}}\right) \IFF (\opnorm{\cdot})_B = \opnorm{\cdot} \in \Gamma_0(\mathcal{X}).
\end{equation}
(iii) (C$_1$) $\Rightarrow$ (C$_2$) is found in {\cite[Theorem~1]{Selesnick2017-sparse}} but only for special case $\Psi=\|\cdot\|_1$
(compare this with Proposition~\ref{prop:exJ_cond}(b) and Proposition~\ref{prop:exJ_cond}(b')).

\end{remark}

\subsection{A proximal splitting algorithm for the LiGME model and its global convergence property}
\label{sec:algorithm}
Our target is the following convex optimization problem:
\begin{problem}[LiGME model in Definition~\ref{def:GMC} under an overall convexity condition]
	\label{prob:optim}
        Assume that $\Psi \in \Gamma_0(\calZ)$ satisfies the even symmetry\footnote{
          In this case, for $B=\rmO_{\cal{Z}}$, we have 
          $\Psi_B(\cdot)=\Psi(\cdot) - \Psi(0_{\calZ})$ (See also \eqref{eq:AO}).
          } $\Psi\circ(-\rmId)=\Psi$ and is \emph{proximable}, i.e., $\operatorname{prox}_{\gamma \Psi}$ is available  as a computable operator for every $\gamma \in \mathbb{R}_{++}$.
        Then 
        for $(A, \mathfrak{L}, B, y,\mu) \!\in\! \mathcal{B}(\mathcal{X}, \mathcal{Y}) \times \mathcal{B}(\mathcal{X}, \mathcal{Z}) \times \mathcal{B}(\mathcal{Z}, \widetilde{\mathcal{Z}}) \times \mathcal{Y} \times \bbR_{++}$ satisfying $A^{\sfT}A - \mu \mathfrak{L}^{\sfT}B^{\sfT}B\mathfrak{L} \!\succeq\! \rmO_{\mathcal{X}}$,
	\begin{equation}  
          \text{find } x^{\star} \in \mathcal{S} := \argmin_{x \in \mathcal{X}} J_{\Psi_B \circ \mathfrak{L}}(x).
	\label{prob:involve}
	\end{equation}
\end{problem}

We will use a technical lemma below.
\begin{lemma}
  \label{lem:qualification}
  In Definition~\ref{def:GMC}, if $\Psi$ satisfies $\Psi\circ(-\rmId)=\Psi$, we have
  \begin{align}
    \label{eq:qualification}
    0_{\mathcal{Z}} \in \operatorname{ri}\left(\dom \left(\left(\Psi+ \frac{1}{2} \|B \cdot \|_{\widetilde{\mathcal{Z}}}^2\right)^*\right) - \ran(B^*)  \right).
  \end{align}
\end{lemma}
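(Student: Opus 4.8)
The plan is to reduce the claimed relative-interior membership to a symmetry property of $\dom(\Psi^*)$. Set $q:=\tfrac12\|B\cdot\|_{\widetilde{\calZ}}^2$ and $h:=\Psi+q$. Since $\dom\Psi=\calZ$ and $q$ is a finite, continuous (quadratic) function, we have $h\in\Gamma_0(\calZ)$ and the constraint qualification $\ri(\dom\Psi)\cap\ri(\dom q)=\calZ\neq\varnothing$ holds; hence the standard conjugate-of-sum rule (cf. \cite{Bauschke2011-convex}) gives $h^*=\Psi^*\,\square\,q^*$ with exact infimal convolution, and in particular, since $h^*$ is proper, $\dom(h^*)=\dom(\Psi^*)+\dom(q^*)$.

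Next I would compute $\dom(q^*)$. Writing $q(x)=\tfrac12\langle x,B^*Bx\rangle_{\calZ}$ with $M:=B^*B\succeq\rmO_{\calZ}$, the conjugate of a (possibly degenerate) positive semidefinite quadratic is $q^*(u)=\tfrac12\langle u,M^\dagger u\rangle_{\calZ}$ on $\dom(q^*)=\ran(M)$. Because $\Null(B^*B)=\Null(B)$ and $B^*B$ is self-adjoint, $\ran(M)=\Null(B)^\perp=\ran(B^*)$. Therefore $\dom(h^*)=\dom(\Psi^*)+\ran(B^*)$, and since $\ran(B^*)$ is a linear subspace (so that $\ran(B^*)-\ran(B^*)=\ran(B^*)=\ran(B^*)+\ran(B^*)$) one obtains
\[
\dom(h^*)-\ran(B^*)=\dom(\Psi^*)+\ran(B^*).
\]
It thus suffices to prove $0_{\calZ}\in\ri\!\left(\dom(\Psi^*)+\ran(B^*)\right)$.

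The crux is the even symmetry. From $\Psi\circ(-\rmId)=\Psi$ one gets $\Psi^*\circ(-\rmId)=\Psi^*$ by the change of variable $x\mapsto -x$ in the definition of the conjugate; hence $C:=\dom(\Psi^*)$ is a symmetric convex set, $C=-C$. Coercivity of $\Psi$ makes $\inf\Psi$ finite, so $\Psi^*(0_{\calZ})=-\inf\Psi<\infty$ and $C\neq\varnothing$. For a nonempty symmetric convex set in a finite-dimensional space, $\ri(C)$ is nonempty and symmetric ($\ri(-C)=-\ri(C)$), so picking $p\in\ri(C)$ gives $-p\in\ri(C)$ and $0_{\calZ}=\tfrac12 p+\tfrac12(-p)\in\ri(C)$ by convexity of $\ri(C)$. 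Finally, the relative-interior calculus $\ri(C+V)=\ri(C)+\ri(V)=\ri(C)+V$ for the subspace $V:=\ran(B^*)$ yields $0_{\calZ}=0_{\calZ}+0_{\calZ}\in\ri(C)+V=\ri\!\left(\dom(h^*)-\ran(B^*)\right)$, which is exactly \eqref{eq:qualification}.

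The step requiring the most care is the passage $\dom(h^*)=\dom(\Psi^*)+\dom(q^*)$: it relies on the constraint qualification for the conjugate of a sum (available here only because we assumed $\dom\Psi=\calZ$) and on correctly identifying $\dom(q^*)=\ran(B^*)$ when $B$ is not injective, i.e. when $q$ is degenerate along $\Null(B)$. Once the domain is computed, the symmetry argument upgrading $0_{\calZ}\in\dom(\Psi^*)$ to $0_{\calZ}\in\ri(\dom(\Psi^*))$ is the conceptual heart, and it is precisely where the hypothesis $\Psi\circ(-\rmId)=\Psi$ is indispensable.
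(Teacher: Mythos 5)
Your proof is correct, but it takes a genuinely different route from the paper's. The paper works directly with the definition of relative interior adopted in its preliminaries, $\ri C=\{x \mid \cone(C-x)=\rmspan(C-x)\}$: it first observes that the even symmetry of $\Psi$ (together with that of $\frac{1}{2}\|B\cdot\|_{\widetilde{\calZ}}^2$) makes the conjugate $\bigl(\Psi+\frac{1}{2}\|B\cdot\|_{\widetilde{\calZ}}^2\bigr)^*$ even, so its domain $\mathcal{D}$ is a symmetric convex set, and then proves $\rmspan\bigl(\mathcal{D}-\ran(B^*)\bigr)\subset\cone\bigl(\mathcal{D}-\ran(B^*)\bigr)$ by an explicit computation: any finite combination $\sum_{i}\alpha_i(v_i-w_i)$ is rewritten as a positive multiple of a convex combination of the points $\mathrm{sgn}(\alpha_i)v_i\in\mathcal{D}$ minus an element of the subspace $\ran(B^*)$. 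That argument is elementary and self-contained, and it never needs to identify the set $\mathcal{D}$. You instead compute the set first: via the conjugate-of-a-sum rule (whose qualification holds because $\dom\Psi=\calZ$), the domain formula $\dom(f\infconv g)=\dom f+\dom g$ (note exactness of the infimal convolution is not actually needed for this), and the identification $\dom(q^*)=\ran(B^*B)=\ran(B^*)$ for the possibly degenerate quadratic $q=\frac{1}{2}\|B\cdot\|_{\widetilde{\calZ}}^2$, you get $\dom\bigl((\Psi+q)^*\bigr)-\ran(B^*)=\dom(\Psi^*)+\ran(B^*)$, and then finish with the symmetry of $\dom(\Psi^*)$ and the relative-interior calculus $\ri(C_1+C_2)=\ri(C_1)+\ri(C_2)$, $\ri(V)=V$ for a subspace $V$. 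Your route leans on heavier (though standard, finite-dimensional) convex-analytic machinery, whereas the paper's verification is bare-hands; in exchange, yours yields a sharper structural statement -- the explicit description of the set whose relative interior is at issue as $\dom(\Psi^*)+\ran(B^*)$ -- and isolates where each hypothesis enters ($\dom\Psi=\calZ$ for the sum rule, coercivity only for $0_{\calZ}\in\dom(\Psi^*)$, which properness of $\Psi^*$ would already give). Both proofs spend the evenness hypothesis at the same conceptual point: producing a symmetric convex set that forces $0_{\calZ}$ into the relative interior.
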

\begin{proof}
See \ref{app:qualification}.
\end{proof}

In the next theorem, (a) and (b) show that the set $\mathcal{S}$ of all globally optimal solutions of Problem~1 can be expressed in terms of the fixed-point set of a computable averaged nonexpansive operator in a certain real Hilbert space,
and (c) presents an iterative algorithm, for Problem~\ref{prob:optim}, based on the Krasnosel'ski{\u{\i}}-Mann iteration in Fact~\ref{fact:KM}.

\begin{theorem}[Nonexpansive operator $\Tlcp$ and iterative algorithm for Problem~\ref{prob:optim}]
  \label{def:Tprop}
  \ \\
  In Problem~\ref{prob:optim}, let
  $(\Hlb:= \mathcal{X} \times \mathcal{Z} \times \mathcal{Z}, \langle \cdot, \cdot \rangle_{\Hlb}, \| \cdot\|_{\Hlb})$ be a real Hilbert space whose inner product $\langle \cdot, \cdot \rangle_{\Hlb}$ is defined as the one for the product space in Example~\ref{ex-regularizations}(d), and 
  define $\Tlcp : \calR \to \calR : (\bmx,\bmv,\bmw) \mapsto (\bmxi,\bmzeta,\bmeta)$, with $(\sigma, \tau) \in \bbR_{++} \times \bbR_{++}$, by
	\begin{align}
	\bmxi &:= \left[\rmId - \frac{1}{\sigma}(A^{\sfT}A-\mu \mathfrak{L}^{\sfT}B^{\sfT}B\mathfrak{L})\right]\bmx -\frac{\mu}{\sigma}\mathfrak{L}^{\sfT}B^{\sfT}B\bmv-\frac{\mu}{\sigma}\mathfrak{L}^{\sfT}\bmw+\frac{1}{\sigma}A^{\sfT}\bmy,  \nonumber \\
	\bmzeta &:= \Prox_{\frac{\mu}{\tau} \Psi} \left[ \frac{2\mu}{\tau}B^{\sfT}B\mathfrak{L}\bmxi - \frac{\mu}{\tau} B^{\sfT}B\mathfrak{L}\bmx + \left( \rmId - \frac{\mu}{\tau} B^{\sfT}B \right) \bmv \right], \nonumber \\
          \bmeta &:=  \Prox_{\Psi^*}\left( 2 \mathfrak{L}\bmxi -  \mathfrak{L}\bmx + \bmw \right).  \nonumber
	\end{align}
        Then
	\begin{enumerate}[(a)]
        \item[\rm (a)] the solution set $\mathcal{S}$ of  Problem 1 can be expressed as
          \begin{align}
            \mathcal{S} = \Xi (\Fix(\Tlcp)):=\{\Xi(\bmx^{\star},\bmv^{\star},\bmw^{\star})\in \mathcal{X} \mid (\bmx^{\star},\bmv^{\star},\bmw^{\star}) \in \Fix(\Tlcp) \}
          \end{align}
          with $\Xi : \calR \to \mathcal{X} : (\bmx,\bmv,\bmw) \mapsto \bmx$.
		\item[\rm (b)] Choose $(\sigma, \tau, \kappa) \in \bbR_{++} \times \bbR_{++}  \times (1,\infty)$ satisfying\footnote{
                    For example,
                    \eqref{eq:stepsize_condition} is satisfied by any $\kappa > 1$ and 
			\begin{equation*}
			\left[ \begin{array}{l}
			\sigma := \left\|\frac{\kappa}{2} A^{\sfT}A + \mu \mathfrak{L}^{\sfT}\mathfrak{L}\right\|_{\rm op} + (\kappa - 1), \\
			\tau := (\frac{\kappa}{2} + \frac{2}{\kappa})\mu \|B\|_{\rm op}^2 + (\kappa - 1).
			\end{array} \right.
			\end{equation*}
		}
		\begin{equation}
		\left[ \begin{array}{l}
                         \sigma \rmId - \frac{\kappa}{2} A^{\sfT}A - \mu \mathfrak{L}^{\sfT}\mathfrak{L} \succ \rmO_{\mathcal{X}}, \\
                         \tau \geq \left( \frac{\kappa}{2} + \frac{2}{\kappa} \right) \mu \|B\|_{\rm op}^2.
		\end{array} \right.
		\label{eq:stepsize_condition}
		\end{equation}
		Then
		\begin{equation}
		\mathfrak{P} := \begin{bmatrix}
		\sigma \rmId & -\mu \mathfrak{L}^{\sfT} B^{\sfT}B & -\mu \mathfrak{L}^{\sfT} \\
		-\mu B^{\sfT}B\mathfrak{L} & \tau \rmId & \rmO_{\mathcal{Z}} \\
		-\mu \mathfrak{L} & \rmO_{\mathcal{Z}} & \mu \rmId
		\end{bmatrix} \succ \rmO_{\calR}
		\label{eq:S_def}
              \end{equation}
		and $\Tlcp$ is $\frac{\kappa}{2\kappa - 1}$-averaged nonexpansive in the Hilbert space $(\calR, \langle \cdot, \cdot \rangle_{\mathfrak{P}}, \| \cdot\|_{\mathfrak{P}})$.
              \item[\rm (c)] \label{item:thalgorithm} Assume
                $(\sigma, \tau, \kappa) \in \bbR_{++} \times \bbR_{++} \times (1,\infty)$ satisfies \eqref{eq:stepsize_condition}. Then, for any initial point $(\bmx_0, \bmv_0,\bmw_0) \in \calR$, the sequence $(\bmx_k, \bmv_k, \bmw_k)_{k \in \mathbb{N}} \subset \calR$ generated by
		\begin{equation}
		(\bmx_{k+1}, \bmv_{k+1}, \bmw_{k+1}) = \Tlcp(\bmx_k, \bmv_k, \bmw_k)
		\label{eq:algorithm_1step}
		\end{equation}
		converges weakly to a point $(\bmx^{\star}, \bmv^{\star}, \bmw^{\star}) \in \Fix(\Tlcp)$ and
		\begin{equation*}
		\lim_{k \to \infty} \bmx_k = \bmx^{\star} \in \mathcal{S}.
              \end{equation*}     
	\end{enumerate}
      \end{theorem}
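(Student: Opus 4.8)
The plan is to recast Problem~\ref{prob:optim} as a monotone inclusion and to realise $\Tlcp$ as a preconditioned forward--backward operator in $(\calR,\langle\cdot,\cdot\rangle_{\mathfrak{P}})$. First I would dualise the two minimisations hidden in $J_{\Psi_B\circ\mathfrak{L}}$: with $g(\cdot):=\min_{\bmv\in\calZ}[\Psi(\bmv)+\frac{1}{2}\|B(\cdot-\bmv)\|_{\widetilde{\mathcal{Z}}}^{2}]$ one has $\mu\Psi(\mathfrak{L}\bmx)=\max_{\bmw}[\mu\langle\mathfrak{L}\bmx,\bmw\rangle_{\calZ}-\mu\Psi^{*}(\bmw)]$ and, from \eqref{eq:defGMC}, $-\mu\,g(\mathfrak{L}\bmx)=\max_{\bmv}[-\mu\Psi(\bmv)-\frac{\mu}{2}\|B\mathfrak{L}\bmx-B\bmv\|_{\widetilde{\mathcal{Z}}}^{2}]$, so that $\min_{\bmx}J_{\Psi_B\circ\mathfrak{L}}=\min_{\bmx}\max_{\bmv,\bmw}\Phi(\bmx,\bmv,\bmw)$ for a saddle function $\Phi$ whose $\bmx$-part is the quadratic governed by $A^{\sfT}A-\mu\mathfrak{L}^{\sfT}B^{\sfT}B\mathfrak{L}$; convexity of $\Phi$ in $\bmx$ is therefore exactly the overall convexity condition $A^{\sfT}A-\mu\mathfrak{L}^{\sfT}B^{\sfT}B\mathfrak{L}\succeq\rmO_{\calX}$ of Proposition~\ref{prop:exJ_cond}. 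Annihilating the partial (sub)differentials yields the KKT system
\[
(A^{\sfT}A-\mu\mathfrak{L}^{\sfT}B^{\sfT}B\mathfrak{L})\bmx+\mu\mathfrak{L}^{\sfT}B^{\sfT}B\bmv+\mu\mathfrak{L}^{\sfT}\bmw=A^{\sfT}\bmy,\quad B^{\sfT}B\mathfrak{L}\bmx-B^{\sfT}B\bmv\in\partial\Psi(\bmv),\quad\mathfrak{L}\bmx\in\partial\Psi^{*}(\bmw),
\]
which I abbreviate (I)--(III).

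For (a) I would set $\bmxi=\bmx$, $\bmzeta=\bmv$, $\bmeta=\bmw$ in the three maps defining $\Tlcp$. The extrapolation terms collapse ($2\mathfrak{L}\bmxi-\mathfrak{L}\bmx=\mathfrak{L}\bmx$, etc.), and the fixed-point characterisation $\bar u=\Prox_{f}(\bar u+d)\IFF d\in\partial f(\bar u)$ turns the $\bmzeta$- and $\bmeta$-equations into the second and third relations of (I)--(III), while the $\bmxi$-equation (after multiplying by $\sigma$) becomes the first; thus $\Fix(\Tlcp)$ is exactly the solution set of (I)--(III). Conversely, because $\dom\Psi=\calZ$ makes the chain rule \eqref{eq:chainrule} applicable and $g\circ\mathfrak{L}$ is differentiable with $\nabla(g\circ\mathfrak{L})(\bmx)=\mathfrak{L}^{\sfT}B^{\sfT}B(\mathfrak{L}\bmx-\bmv^{\star})$, the Fermat rule \eqref{eq:Fermat} applied to $J_{\Psi_B\circ\mathfrak{L}}$ unfolds into (I)--(III) for a suitable inner minimiser $\bmv$ and a subgradient $\bmw\in\partial\Psi(\mathfrak{L}\bmx)$ (nonempty since $\dom\Psi=\calZ$); hence every minimiser lifts to a fixed point and $\mathcal{S}=\Xi(\Fix(\Tlcp))$. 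The qualification of Lemma~\ref{lem:qualification} is what legitimises these conjugate/chain-rule manipulations.

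For (b) I would exhibit the splitting $0\in\mathcal{A}\bmz+\mathcal{C}\bmz$ of (I)--(III), where $\bmz=(\bmx,\bmv,\bmw)$, $\mathcal{A}:=\mathcal{A}_0+\mathcal{S}$ with $\mathcal{A}_0(\bmx,\bmv,\bmw)=(0,\mu\partial\Psi(\bmv),\mu\partial\Psi^{*}(\bmw))$ maximally monotone and $\mathcal{S}\colon(\bmx,\bmv,\bmw)\mapsto(\mu\mathfrak{L}^{\sfT}B^{\sfT}B\bmv+\mu\mathfrak{L}^{\sfT}\bmw,\,-\mu B^{\sfT}B\mathfrak{L}\bmx,\,-\mu\mathfrak{L}\bmx)$ bounded skew-symmetric (hence monotone), while $\mathcal{C}=\nabla h$ is the gradient of $h(\bmx,\bmv,\bmw)=\frac{1}{2}\langle\bmx,(A^{\sfT}A-\mu\mathfrak{L}^{\sfT}B^{\sfT}B\mathfrak{L})\bmx\rangle_{\calX}-\langle A^{\sfT}\bmy,\bmx\rangle_{\calX}+\frac{\mu}{2}\|B\bmv\|_{\widetilde{\mathcal{Z}}}^{2}$. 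Condition (C$_1$) makes $h$ convex, so its Hessian $\mathcal{Q}=\diag(A^{\sfT}A-\mu\mathfrak{L}^{\sfT}B^{\sfT}B\mathfrak{L},\,\mu B^{\sfT}B,\,\rmO)\succeq\rmO$ and $\mathcal{C}$ is monotone. Writing $\bmz^{+}=(\bmxi,\bmzeta,\bmeta)=\Tlcp(\bmz)$ and using that $\mathfrak{P}+\mathcal{S}$ is block lower-triangular (so the resolvent step decouples into the stated Gauss--Seidel updates), a direct substitution verifies $\mathfrak{P}(\bmz-\bmz^{+})\in\mathcal{A}\bmz^{+}+\mathcal{C}\bmz$, i.e. $\Tlcp=(\rmId+\mathfrak{P}^{-1}\mathcal{A})^{-1}(\rmId-\mathfrak{P}^{-1}\mathcal{C})$ in $(\calR,\langle\cdot,\cdot\rangle_{\mathfrak{P}})$. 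Since $\mathcal{C}$ is affine with PSD linear part $\mathcal{Q}$, the $\frac{\kappa}{2}$-cocoercivity of $\mathfrak{P}^{-1}\mathcal{C}$ in the $\mathfrak{P}$-metric is equivalent to $\mathfrak{P}\succeq\frac{\kappa}{2}\mathcal{Q}$, and I would derive both $\mathfrak{P}\succ\rmO_{\calR}$ and $\mathfrak{P}\succeq\frac{\kappa}{2}\mathcal{Q}$ from \eqref{eq:stepsize_condition} by Schur complements.

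The heart of the argument --- and the step I expect to be the main obstacle --- is this last semidefinite inequality. Eliminating the $\bmw$-block ($\mu\rmId$) of $\mathfrak{P}-\frac{\kappa}{2}\mathcal{Q}$ leaves the top-left block $P_{11}=(\sigma\rmId-\frac{\kappa}{2}A^{\sfT}A-\mu\mathfrak{L}^{\sfT}\mathfrak{L})+\frac{\kappa\mu}{2}\mathfrak{L}^{\sfT}B^{\sfT}B\mathfrak{L}$, which is $\succ\frac{\kappa\mu}{2}(B\mathfrak{L})^{\sfT}(B\mathfrak{L})$ by the first line of \eqref{eq:stepsize_condition}. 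The elementary fact ``$P\succeq\alpha L^{\sfT}L\Rightarrow\alpha LP^{-1}L^{\sfT}\preceq\rmId$'' then gives $\frac{\kappa\mu}{2}B\mathfrak{L}P_{11}^{-1}\mathfrak{L}^{\sfT}B^{\sfT}\preceq\rmId$, and sandwiching by $B^{\sfT}(\cdot)B$ yields $\mu^{2}B^{\sfT}B\mathfrak{L}P_{11}^{-1}\mathfrak{L}^{\sfT}B^{\sfT}B\preceq\frac{2\mu}{\kappa}B^{\sfT}B$; the remaining Schur complement is therefore dominated by $\tau\rmId-(\frac{\kappa}{2}+\frac{2}{\kappa})\mu B^{\sfT}B\succeq\rmO$, which is precisely the second line of \eqref{eq:stepsize_condition}. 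With $\mathfrak{P}\succeq\frac{\kappa}{2}\mathcal{Q}$ in hand, $\rmId-\mathfrak{P}^{-1}\mathcal{C}$ is $\frac{1}{\kappa}$-averaged and the $\mathfrak{P}$-resolvent $(\rmId+\mathfrak{P}^{-1}\mathcal{A})^{-1}$ is $\frac{1}{2}$-averaged, so Fact~\ref{fact:averaged} yields the composite constant $\frac{\frac12+\frac1\kappa-\frac1\kappa}{1-\frac{1}{2\kappa}}=\frac{\kappa}{2\kappa-1}$, proving (b). Finally, for (c), coercivity of $\Psi$ makes $J_{\Psi_B\circ\mathfrak{L}}\in\Gamma_0(\calX)$ coercive, so $\mathcal{S}\neq\varnothing$ and hence $\Fix(\Tlcp)\neq\varnothing$; applying Fact~\ref{fact:KM} to the $\frac{\kappa}{2\kappa-1}$-averaged $\Tlcp$ gives convergence of $(\bmx_k,\bmv_k,\bmw_k)$ to some $(\bmx^{\star},\bmv^{\star},\bmw^{\star})\in\Fix(\Tlcp)$, whence $\bmx_k\to\bmx^{\star}\in\mathcal{S}$ by (a).
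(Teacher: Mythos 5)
Your proposal follows, in essence, the paper's own proof: your KKT system (I)--(III) is exactly the paper's characterization of $\mathcal{S}$ via zeros of $F+G$ (your $\mathcal{C}$ is the paper's forward operator $F$, and your $\mathcal{A}_0$ plus the skew coupling is the paper's $G$), your factorization $\Tlcp=(\rmId+\mathfrak{P}^{-1}\mathcal{A})^{-1}(\rmId-\mathfrak{P}^{-1}\mathcal{C})$ in the $\mathfrak{P}$-metric is the paper's \eqref{eq:Tlcpex}, and the averagedness bookkeeping ($\tfrac12$ for the resolvent, $\tfrac1\kappa$ for the forward step, Fact~\ref{fact:averaged} for $\tfrac{\kappa}{2\kappa-1}$) is identical, as is the appeal to Fact~\ref{fact:KM}. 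Two local variations are genuinely different and both sound. In (a) you reach the stationarity system through differentiability of the inner envelope $g$, with $\nabla(g\circ\mathfrak{L})(x)=\mathfrak{L}^{*}B^{*}B(\mathfrak{L}x-v^{\star})$; this is correct (exactness of the infimal convolution plus uniqueness of $Bv^{\star}$ forces a singleton subdifferential), and it actually bypasses the paper's double application of the chain rule to $\bigl(\Psi+\tfrac12\|B\cdot\|_{\widetilde{\calZ}}^{2}\bigr)^{*}\circ B^{*}\circ B\mathfrak{L}$, which is the only place Lemma~\ref{lem:qualification} is needed --- so your remark that Lemma~\ref{lem:qualification} legitimises \emph{your} manipulations is inaccurate for your route (it is what the paper's route needs). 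In (b), your verification of $\mathfrak{P}\succeq\tfrac{\kappa}{2}\mathcal{Q}$ by eliminating the $w$-block, noting $P_{11}\succ\tfrac{\kappa\mu}{2}(B\mathfrak{L})^{*}(B\mathfrak{L})$ from the first line of \eqref{eq:stepsize_condition}, and applying the sandwich $P\succeq\alpha L^{*}L\Rightarrow\alpha LP^{-1}L^{*}\preceq\rmId$ is a cleaner computation than the paper's splitting of $\mathfrak{P}-\tfrac{\kappa}{2}M$ into two PSD summands checked separately. Note also that your asserted \emph{equivalence} between $\tfrac{\kappa}{2}$-cocoercivity of $\mathfrak{P}^{-1}\mathcal{C}$ and $\mathfrak{P}\succeq\tfrac{\kappa}{2}\mathcal{Q}$ is true for $\mathcal{Q}\succeq\rmO$ (diagonalize $\mathfrak{P}^{-1/2}\mathcal{Q}\mathfrak{P}^{-1/2}$: the conditions read $\lambda_i-\tfrac{\kappa}{2}\lambda_i^2\geq0$ and $\lambda_i\leq\tfrac{2}{\kappa}$, which coincide), but you should record this argument; the paper only proves and only uses the sufficiency direction.

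The one genuine error is in (c): coercivity of $\Psi$ does \emph{not} make $J_{\Psi_B\circ\mathfrak{L}}$ coercive. Whenever $\Null(A)\cap\Null(\mathfrak{L})\neq\{0_{\calX}\}$, the objective is constant along that common null space and hence non-coercive: for instance $\calX=\bbR^{2}$, $\calY=\calZ=\bbR$, $A=\mathfrak{L}=[1\ \ 0]$, $B=\rmO$, $\Psi=|\cdot|$ satisfies every hypothesis of Problem~\ref{prob:optim}, yet $J_{\Psi_B\circ\mathfrak{L}}(x_1,x_2)=\tfrac12(y-x_1)^2+\mu|x_1|$ is independent of $x_2$. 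So your justification of $\mathcal{S}\neq\varnothing$ (hence of $\Fix(\Tlcp)\neq\varnothing$, which Fact~\ref{fact:KM} requires) fails as stated. The conclusion is nevertheless true and can be repaired by a recession argument: if $d$ is a recession direction of $J_{\Psi_B\circ\mathfrak{L}}$, then $Ad=0$ (else quadratic growth), whence $B\mathfrak{L}d=0$ by the condition $A^{*}A-\mu\mathfrak{L}^{*}B^{*}B\mathfrak{L}\succeq\rmO_{\calX}$, whence the recession function equals $\mu\Psi^{\infty}(\mathfrak{L}d)$, which is positive unless $\mathfrak{L}d=0$ by coercivity of $\Psi$; thus $J_{\Psi_B\circ\mathfrak{L}}$ is invariant along all of its recession directions and attains its minimum. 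Alternatively you may simply observe, as the paper tacitly does, that Problem~\ref{prob:optim} presupposes $\mathcal{S}\neq\varnothing$; the paper's own proof of (c) never addresses nonemptiness, so your instinct to prove it was good --- only the coercivity shortcut is wrong.
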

\begin{proof}
See \ref{app:theo1}.
\end{proof}
Detailed description of the algorithm proposed in Theorem~\ref{def:Tprop} is shown in Algorithm~1.

\begin{remark}[Algorithm~1 versus existing algorithms]
  (a) The derivation of Algorithm~1 is inspired by Condat's primal-dual algorithm \cite{Condat2013-primal_dual} and is essentially based on the so-called forward-backward splitting method (see also \eqref{eq:Tlcpex} demonstrating that $\Tlcp$ is a forward-backward operator).
  Since Condat's primal-dual algorithm was proposed for minimization of sum of linearly involved convex terms, it is not directly applicable to the LiGME model involving nonconvex functions.
  
  \noindent (b)
  The proposed algorithm in \eqref{eq:algorithm_1step} differs clearly from 
  Combettes-Pesquet primal-dual algorithm \cite{combettes2012primal} which is for monotone inclusion problems and based on the so-called forward-backward-forward splitting method (or Tseng's method), i.e., requires an extra forward step compared with the so-called forward-backward splitting method.
  
  \noindent (c) Vu's primal-dual algorithm \cite{Vu2013-splitting} for monotone inclusion is also based on the so-called forward-backward splitting method. However, to the best of the authors' knowledge,
  the strongly monotone assumption (of $D_i$) in \cite[Problem 1.1]{Vu2013-splitting} prevents from applying directly the Vu's primal-dual algorithm to Problem~1 if $\operatorname{null}(B)\not = \{0_{\mathcal{Z}}\}$. Algorithm~1 is applicable to general $B \in \mathcal{B}(\mathcal{Z}, \widetilde{\mathcal{Z}})$.

\end{remark}

\begin{table}[t]
	\centering
	\begin{small}
		\begin{tabular}{p{12cm}}
			\hline \hline \\[-4mm]
			\hspace{-2mm} {\normalsize {\bf Algorithm 1} for Problem~\ref{prob:optim}.} \\
			\hline
                  \hspace{-2mm} Choose ($\bmx_0$, $\bmv_0$, $\bmw_0$) $\in \calR(=\mathcal{X} \times \mathcal{Z} \times \mathcal{Z})$. \\
			\hspace{-2mm} Let $(\sigma, \tau, \kappa) \in \mathbb{R}_{++} \times \mathbb{R}_{++} \times (1,\infty)$ satisfying \eqref{eq:stepsize_condition}.
                  \\
			\hspace{-2mm} Define $\mathfrak{P}$ as \eqref{eq:S_def}. \\
			\hspace{-2mm} $k \gets 0$. \\
			\hspace{-2mm} {\bf Do} \\
			\hspace{2mm}$\bmx_{k+1}  \gets  \left[\rmId - \frac{1}{\sigma}(A^{\sfT}A-\mu \mathfrak{L}^{\sfT}B^{\sfT}B\mathfrak{L})\right]\bmx_k - \frac{\mu}{\sigma}\mathfrak{L}^{\sfT}B^{\sfT}B\bmv_k-\frac{\mu}{\sigma}\mathfrak{L}^{\sfT}\bmw_k+\frac{1}{\sigma}A^{\sfT}\bmy$ \\[2mm]
			\hspace{2mm}$\bmv_{k+1}  \gets  \Prox_{\frac{\mu}{\tau}\Psi}  \left[  \frac{2\mu}{\tau}B^{\sfT}B\mathfrak{L}\bmx_{k+1} - \frac{\mu}{\tau} B^{\sfT}B\mathfrak{L}\bmx_k + \left( \rmId - \frac{\mu}{\tau} B^{\sfT}B \right) \bmv_{k}  \right]$ \\
			\hspace{2mm}$\bmw_{k+1}  \gets  \Prox_{\Psi^{\ast}} \left( 2\mathfrak{L}\bmx_{k+1} - \mathfrak{L}\bmx_k + \bmw_k \right)$
                  \\[2mm]
			\hspace{2mm}$k \gets k+1$ \\
			\hspace{-2mm} {\bf while} $\|(\bmx_k, \bmv_k, \bmw_k) - (\bmx_{k-1}, \bmv_{k-1}, \bmw_{k-1})\|_{\mathfrak{P}}$ is not sufficiently small \\
			\hspace{-2mm} {\bf return} $\bmx_k$ \\
			\hline
		\end{tabular}
	\end{small}
	\ \\[-4mm]
\end{table}

\subsection{How to choose $B$ to ensure overall-convexity of $J_{\Psi_B \circ L}$}
\label{sec:choose_B}
Choices of $B$ to guarantee $J_{\Psi_{B} \circ L} \in \Gamma_0(\bbR^n)$ are given, e.g., as follows.
\renewcommand{\sfT}{\top} 
\begin{proposition}[A design of $B$ to ensure the overall-convexity condition in Proposition~\ref{prop:exJ_cond}(b)]
  \label{rem:selectionB}
  In Definition~\ref{def:GMC}, let $(\calX,\calY,\calZ) = (\bbR^{n},\bbR^{m},\bbR^{l})$, $(A, \mathfrak{L}, \mu) \in \bbR^{m \times n} \times \bbR^{l \times n} \times \bbR_{++}$, and $\rank(\mathfrak{L}) = l$. Choose a nonsingular $\tilde{\mathfrak{L}} \in \bbR^{n \times n}$ satisfying\footnote{
    Such a choice is always possible. See Corollary~\ref{col:selectionBmultiple} and
    numerical experiments in four different scenarios in Section~\ref{sec:NumericalExperiment}.
  }$
	[\rmO_{l \times (n-l)}\ \ \rmI_{l}] \tilde{\mathfrak{L}} = \mathfrak{L}$.
	Then
	\begin{equation}
	\label{eq:Btheta}
	B_{\theta} := \sqrt{\theta / \mu} \Lambda^{1/2} U^{\sfT} \in \mathbb{R}^{l \times l}, \quad \theta \in [0,1],
	\end{equation}
	ensures $J_{\Psi_{B_{\theta}} \circ \mathfrak{L}} \in \Gamma_0(\bbR^n)$, where               
        \begin{align}
	\label{eq:tildeAdef}
	[\tilde{A}_1\ \ \tilde{A}_2]:=A(\tilde{\mathfrak{L}})^{-1}
	\end{align}
        and $U\Lambda U^{\sfT} := \tilde{A}_2^{\sfT} \tilde{A}_2 - \tilde{A}_2^{\sfT} \tilde{A}_1 (\tilde{A}_1^{\sfT}\tilde{A}_1)^{\dagger} \tilde{A}_1^{\sfT} \tilde{A}_2 \in \bbR^{l \times l}$ is an eigendecomposition.
      \end{proposition}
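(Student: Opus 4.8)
The plan is to verify the overall-convexity condition (C$_1$) of Proposition~\ref{prop:exJ_cond}(b), namely $A^{\top}A - \mu \mathfrak{L}^{\top}B_{\theta}^{\top}B_{\theta}\mathfrak{L} \succeq \rmO_n$, and then invoke the implication (C$_1$)$\Rightarrow$(C$_2$) established there to conclude $J_{\Psi_{B_{\theta}}\circ\mathfrak{L}}\in\Gamma_0(\bbR^n)$. First I would record that, from \eqref{eq:Btheta},
\[
\mu B_{\theta}^{\top}B_{\theta} = \theta\, U\Lambda^{1/2}\Lambda^{1/2}U^{\top} = \theta\, U\Lambda U^{\top} = \theta\left(\tilde{A}_2^{\top}\tilde{A}_2 - \tilde{A}_2^{\top}\tilde{A}_1(\tilde{A}_1^{\top}\tilde{A}_1)^{\dagger}\tilde{A}_1^{\top}\tilde{A}_2\right),
\]
where this matrix is positive semidefinite (shown below), so that $\Lambda\succeq\rmO_l$ and $B_{\theta}$ is genuinely well defined.

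Next I would change variables by $\xi := \tilde{\mathfrak{L}}x\in\bbR^n$, which is a bijection of $\bbR^n$ since $\tilde{\mathfrak{L}}$ is nonsingular, and split $\xi=(\xi_1,\xi_2)$ with $\xi_1\in\bbR^{n-l}$, $\xi_2\in\bbR^l$. Multiplying the defining relations $[\tilde{A}_1\ \tilde{A}_2]=A\tilde{\mathfrak{L}}^{-1}$ and $[\rmO_{l\times(n-l)}\ \rmI_l]\tilde{\mathfrak{L}}=\mathfrak{L}$ on the right by $\tilde{\mathfrak{L}}^{-1}$ gives $Ax=\tilde{A}_1\xi_1+\tilde{A}_2\xi_2$ and $\mathfrak{L}x=\xi_2$. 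Hence $x^{\top}(A^{\top}A-\mu\mathfrak{L}^{\top}B_{\theta}^{\top}B_{\theta}\mathfrak{L})x = \|\tilde{A}_1\xi_1+\tilde{A}_2\xi_2\|^2 - \theta\,\xi_2^{\top}U\Lambda U^{\top}\xi_2$, and since $\xi$ ranges over all of $\bbR^n$ as $x$ does, (C$_1$) is equivalent to
\[
\|\tilde{A}_1\xi_1+\tilde{A}_2\xi_2\|^2 \ \geq\ \theta\,\xi_2^{\top}U\Lambda U^{\top}\xi_2 \qquad \text{for all } \xi_1\in\bbR^{n-l},\ \xi_2\in\bbR^l .
\]

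The key observation is that $\xi_2^{\top}U\Lambda U^{\top}\xi_2$ is precisely the minimal least-squares residual over $\xi_1$. Letting $P:=\tilde{A}_1(\tilde{A}_1^{\top}\tilde{A}_1)^{\dagger}\tilde{A}_1^{\top}$ be the orthogonal projector onto $\ran(\tilde{A}_1)$, I would use $\min_{\xi_1}\|\tilde{A}_1\xi_1+\tilde{A}_2\xi_2\|^2 = \|(\rmId-P)\tilde{A}_2\xi_2\|^2 = \xi_2^{\top}(\tilde{A}_2^{\top}\tilde{A}_2-\tilde{A}_2^{\top}P\tilde{A}_2)\xi_2 = \xi_2^{\top}U\Lambda U^{\top}\xi_2$, where the last equality uses $P^{\top}P=P$ together with $\tilde{A}_2^{\top}P\tilde{A}_2=\tilde{A}_2^{\top}\tilde{A}_1(\tilde{A}_1^{\top}\tilde{A}_1)^{\dagger}\tilde{A}_1^{\top}\tilde{A}_2$. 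In particular $U\Lambda U^{\top}\succeq\rmO_l$, confirming the earlier claim. Therefore, for every $\xi_1$,
\[
\|\tilde{A}_1\xi_1+\tilde{A}_2\xi_2\|^2 \ \geq\ \min_{\xi_1'}\|\tilde{A}_1\xi_1'+\tilde{A}_2\xi_2\|^2 = \xi_2^{\top}U\Lambda U^{\top}\xi_2 \ \geq\ \theta\,\xi_2^{\top}U\Lambda U^{\top}\xi_2,
\]
the final inequality holding because $\theta\in[0,1]$ and $\xi_2^{\top}U\Lambda U^{\top}\xi_2\geq 0$. This establishes (C$_1$), and applying Proposition~\ref{prop:exJ_cond}(b) completes the proof. (The hypothesis $\rank(\mathfrak{L})=l$ is exactly what permits completing $\mathfrak{L}$ to a nonsingular $\tilde{\mathfrak{L}}$ with $[\rmO_{l\times(n-l)}\ \rmI_l]\tilde{\mathfrak{L}}=\mathfrak{L}$, as asserted in the footnote.)

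The one point requiring genuine care is the identification of $\xi_2^{\top}U\Lambda U^{\top}\xi_2$ with the minimal residual when $\tilde{A}_1$ lacks full column rank: then $\tilde{A}_1^{\top}\tilde{A}_1$ is singular and one must work through the Moore-Penrose pseudoinverse, verifying that $P=\tilde{A}_1(\tilde{A}_1^{\top}\tilde{A}_1)^{\dagger}\tilde{A}_1^{\top}$ really is the orthogonal projector onto $\ran(\tilde{A}_1)$ (so that $P=P^{\top}=P^2$ and $P\tilde{A}_1=\tilde{A}_1$). This is a standard pseudoinverse identity, but it is the only place where the rank-deficient case needs explicit attention; once it is secured, both the Schur-complement evaluation and the inequality chain are routine.
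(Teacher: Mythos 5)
Your proof is correct, and it takes a genuinely different (and more self-contained) route than the paper. The paper rewrites (C$_1$) as positive semidefiniteness of the block matrix $\bigl[\begin{smallmatrix} \tilde{A}_1^{\top}\tilde{A}_1 & \tilde{A}_1^{\top}\tilde{A}_2 \\ \tilde{A}_2^{\top}\tilde{A}_1 & \tilde{A}_2^{\top}\tilde{A}_2 - \mu B_{\theta}^{\top}B_{\theta} \end{smallmatrix}\bigr]$ and then invokes Albert's theorem on generalized (pseudoinverse) Schur complements, which requires separately verifying the range condition $\tilde{A}_1^{\top}\tilde{A}_1(\tilde{A}_1^{\top}\tilde{A}_1)^{\dagger}\tilde{A}_1^{\top}\tilde{A}_2 = \tilde{A}_1^{\top}\tilde{A}_2$ via $\ran(\tilde{A}_1^{\top}\tilde{A}_2)\subset\ran(\tilde{A}_1^{\top})=\ran(\tilde{A}_1^{\top}\tilde{A}_1)$, before reducing to $\theta^{-1}\mu B_{\theta}^{\top}B_{\theta} - \mu B_{\theta}^{\top}B_{\theta} \succeq \rmO_l$; it also treats $\theta=0$ as a separate case. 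You reach the same reduction by interpreting $\xi_2^{\top}U\Lambda U^{\top}\xi_2$ directly as the minimal least-squares residual $\min_{\xi_1}\|\tilde{A}_1\xi_1+\tilde{A}_2\xi_2\|^2=\|(\rmId-P)\tilde{A}_2\xi_2\|^2$ with $P=\tilde{A}_1(\tilde{A}_1^{\top}\tilde{A}_1)^{\dagger}\tilde{A}_1^{\top}$ the orthogonal projector onto $\ran(\tilde{A}_1)$ --- this is, in effect, a direct proof of the one direction of Albert's theorem you need, and it makes the role of $U\Lambda U^{\top}$ transparent (it is the residual quadratic form, hence automatically positive semidefinite, which also legitimizes $\Lambda^{1/2}$ and the definition of $B_{\theta}$, a point the paper leaves implicit). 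Your version buys self-containedness (no external matrix-analysis citation), a uniform treatment of $\theta=0$ without a case split, and the same careful handling of the rank-deficient $\tilde{A}_1$ case via the standard identity $(\tilde{A}_1^{\top}\tilde{A}_1)^{\dagger}\tilde{A}_1^{\top}=\tilde{A}_1^{\dagger}$; the paper's version buys brevity and an equivalence (Albert's criterion is an if-and-only-if, so the paper's chain in fact characterizes exactly when the block form is positive semidefinite), though only the implication is needed for the stated conclusion.
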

      \renewcommand{\sfT}{*}
\begin{proof}
         See \ref{app:prop2}.
       \end{proof}

       The next corollary presents a way of design $B^{\langle i \rangle} \in \mathcal{B}(\calZ,\widetilde{\calZ}) \ (i=1,2,\ldots, \mathcal{M})$ in Example~\ref{prop:productspce} for $\Psi_B\circ \mathfrak{L}$ in \eqref{eq:ex3PBL} to ensure the overall-convexity condition in Proposition~\ref{prop:exJ_cond}(b).

\renewcommand{\sfT}{\top} 
\begin{corollary}[A design of $B^{\langle i \rangle}$ in Example~\ref{prop:productspce} to ensure the overall-convexity condition in Proposition~\ref{prop:exJ_cond}(b)]
	\label{col:selectionBmultiple}
	In Example~\ref{prop:productspce}, let $(\calX,\calY,\calZ_i) = (\bbR^{n},\bbR^{m},\bbR^{l_i})$, $(A, \mathfrak{L}_i, \mu) \in \bbR^{m \times n} \times \bbR^{l_i \times n} \times \bbR_{++}$,
        and $\rank(\mathfrak{L}_i) = l_i$ $(i = 1, 2, \dots, \mathcal{M})$.
%
        Choose nonsingular $\tilde{\mathfrak{L}_i} \in \bbR^{n \times n}$ satisfying $
	[\rmO_{l_i \times (n-l_i)}\ \ \rmI_{l_i}] \tilde{\mathfrak{L}_i} = \mathfrak{L}_i \ (i = 1,2,\dots,\mathcal{M})$
        and $\omega_i \in \mathbb{R}_{++}$ ($i=1,2,\ldots, \mathcal{M}$) satisfying $\sum_{i=1}^{\mathcal{M}} \omega_i = 1$.
        For each $i=1,2,\ldots, \mathcal{M}$, apply Proposition~\ref{rem:selectionB} to $\left(\sqrt{\frac{\omega_i}{\mu}} A, \mathfrak{L}_i, \mu_i\right)$
        to obtain $B_{\theta_i}^{\langle i \rangle} \in \mathbb{R}^{l_i \times l_i}$ satisfying \\
$\left(\sqrt{\frac{\omega_i}{\mu}} A\right)^{\sfT}\left(\sqrt{\frac{\omega_i}{\mu}} A\right) - \mu_i \mathcal{L}_i^{\sfT} {B_{\theta_i}^{\langle i \rangle}}^{\sfT} B_{\theta_i}^{\langle i \rangle} \mathfrak{L}_i  \succeq \rmO_{n \times n}$. Then 
%
$B_{\theta}\colon \bbR^{l_1} \times \bbR^{l_2} \times \ldots \times~\bbR^{l_\mathcal{M}} \to \bbR^{l_1} \times \bbR^{l_2} \times \ldots \times\bbR^{l_\mathcal{M}}  \colon (z_1,\ldots,z_M)\mapsto \left(\sqrt{\mu_1}B_{\theta_1}^{\langle 1\rangle} z_1,\ldots, \sqrt{\mu_{\mathcal{M}}}B_{\theta_{\mathcal{M}}}^{\langle \mathcal{M}\rangle}z_{\mathcal{M}}\right)$
ensures $J_{\Psi_{B_{\theta}} \circ \mathfrak{L}} \in \Gamma_0(\bbR^n)$.
\end{corollary}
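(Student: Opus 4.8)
The plan is to reduce the claim to the single overall-convexity condition (C$_1$) of Proposition~\ref{prop:exJ_cond}(b) for the product-space data $(A,\mathfrak{L},B_\theta,\mu)$, and then to invoke the implication (C$_1$)$\Rightarrow$(C$_2$) established there. Since $\Psi=\bigoplus_{i=1}^{\mathcal{M}}\mu_i\Psi^{\langle i\rangle}$ inherits coercivity and full domain from the $\Psi^{\langle i\rangle}$, and since $\mu\in\bbR_{++}$, $A\in\mathcal{B}(\calX,\calY)$, $\mathfrak{L}\in\mathcal{B}(\calX,\calZ)$, $B_\theta\in\mathcal{B}(\calZ,\widetilde{\calZ})$, it suffices to verify
\begin{equation}
A^{\top}A-\mu\,\mathfrak{L}^{\top}B_\theta^{\top}B_\theta\mathfrak{L}\succeq\rmO_{n}.
\label{eq:corgoal}
\end{equation}
Once \eqref{eq:corgoal} is in hand, Proposition~\ref{prop:exJ_cond}(b) immediately yields $J_{\Psi_{B_\theta}\circ\mathfrak{L}}\in\Gamma_0(\bbR^n)$.

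First I would unfold the operator in \eqref{eq:corgoal} using the block structure. Because $B_\theta$ acts block-diagonally and $\mathfrak{L}x=(\mathfrak{L}_ix)_{i=1}^{\mathcal{M}}$, for every $x\in\bbR^n$ we have
\begin{equation}
\|B_\theta\mathfrak{L}x\|_{\widetilde{\calZ}}^{2}=\sum_{i=1}^{\mathcal{M}}\mu_i\,\|B_{\theta_i}^{\langle i\rangle}\mathfrak{L}_ix\|_{\widetilde{\mathcal{Z}}_i}^{2},
\label{eq:corblock}
\end{equation}
and hence $\mathfrak{L}^{\top}B_\theta^{\top}B_\theta\mathfrak{L}=\sum_{i=1}^{\mathcal{M}}\mu_i\,\mathfrak{L}_i^{\top}{B_{\theta_i}^{\langle i\rangle}}^{\top}B_{\theta_i}^{\langle i\rangle}\mathfrak{L}_i$. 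This is the only genuine computation in the argument, and it is elementary once the block-diagonal action of $B_\theta$ and the stacking form of $\mathfrak{L}$ are written out.

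It then remains to assemble the per-block inequalities into \eqref{eq:corgoal}. By construction, each application of Proposition~\ref{rem:selectionB} to the triple $\left(\sqrt{\omega_i/\mu}\,A,\mathfrak{L}_i,\mu_i\right)$ produces $B_{\theta_i}^{\langle i\rangle}$ satisfying
\begin{equation}
\tfrac{\omega_i}{\mu}A^{\top}A-\mu_i\,\mathfrak{L}_i^{\top}{B_{\theta_i}^{\langle i\rangle}}^{\top}B_{\theta_i}^{\langle i\rangle}\mathfrak{L}_i\succeq\rmO_{n}\qquad(i=1,\dots,\mathcal{M}),
\label{eq:corperblock}
\end{equation}
each such invocation being admissible thanks to $\rank(\mathfrak{L}_i)=l_i$. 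Multiplying \eqref{eq:corperblock} by $\mu>0$, summing over $i$, and using $\sum_{i=1}^{\mathcal{M}}\omega_i=1$ gives $A^{\top}A-\mu\sum_{i=1}^{\mathcal{M}}\mu_i\mathfrak{L}_i^{\top}{B_{\theta_i}^{\langle i\rangle}}^{\top}B_{\theta_i}^{\langle i\rangle}\mathfrak{L}_i\succeq\rmO_{n}$, which is exactly \eqref{eq:corgoal} in view of the identity from the previous paragraph. The only point to watch is purely bookkeeping of the scalings $\mu$, $\mu_i$, and $\omega_i$: the factor $\mu$ must cancel the $\mu^{-1}$ hidden in $\sqrt{\omega_i/\mu}$, and the weights $\omega_i$ must recombine the $\mathcal{M}$ copies of $A^{\top}A$ into a single one. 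There is no deeper obstacle, so I expect the entire difficulty to lie in stating \eqref{eq:corblock} cleanly and then chaining \eqref{eq:corperblock} through the convex combination.
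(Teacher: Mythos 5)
Your proposal is correct and takes essentially the same approach as the paper's proof: expand $\mathfrak{L}^{\top}B_{\theta}^{\top}B_{\theta}\mathfrak{L}$ blockwise into $\sum_{i=1}^{\mathcal{M}}\mu_i\,\mathfrak{L}_i^{\top}{B_{\theta_i}^{\langle i\rangle}}^{\top}B_{\theta_i}^{\langle i\rangle}\mathfrak{L}_i$, use $\sum_{i=1}^{\mathcal{M}}\omega_i=1$ to rewrite $A^{\top}A-\mu\,\mathfrak{L}^{\top}B_{\theta}^{\top}B_{\theta}\mathfrak{L}$ as the nonnegative combination $\mu\sum_{i=1}^{\mathcal{M}}\left(\tfrac{\omega_i}{\mu}A^{\top}A-\mu_i\mathfrak{L}_i^{\top}{B_{\theta_i}^{\langle i\rangle}}^{\top}B_{\theta_i}^{\langle i\rangle}\mathfrak{L}_i\right)\succeq\rmO_{n}$ of per-block positive semidefinite terms, and conclude via Proposition~\ref{prop:exJ_cond}(b). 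Your write-up only differs in making explicit two points the paper leaves implicit: the blockwise norm identity justifying the first equality, and the check that the product-space $\Psi$ inherits coercivity and full domain so that Proposition~\ref{prop:exJ_cond}(b) applies.
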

\begin{proof}
  Verified by
	\begin{align*} 
          A^{\sfT}A - \mu \mathfrak{L}^{\sfT} B_{\theta}^{\sfT} B_{\theta} \mathfrak{L} &= A^{\sfT}A - \mu \sum_{i=1}^{\mathcal{M}} \mu_i \mathfrak{L}_i^{\sfT} {B_{\theta_i}^{\langle i \rangle}}^{\sfT} B_{\theta_i}^{\langle i \rangle} \mathfrak{L}_i  \\
          &= \mu \sum_{i=1}^{\mathcal{M}} \left( \frac{\omega_i}{\mu}A^{\sfT}A - \mu_i \mathcal{L}_i^{\sfT} {B_{\theta_i}^{\langle i \rangle}}^{\sfT} B_{\theta_i}^{\langle i \rangle} \mathfrak{L}_i \right) \succeq \rmO_{n \times n}.
	\end{align*}
\end{proof}
\renewcommand{\sfT}{*}

\section{Numerical Experiments}
\label{sec:NumericalExperiment}
To demonstrate the effectiveness of the proposed penalties (LiGME penalties) and the proposed algorithm for the LiGME model (see Algorithm~1), we present numerical experiments in four sparsity-rank-aware signal processing scenarios: (i) recovering a piecewise constant 1-d signal, (ii) deburring a piecewise constant image, (iii) filling missing entries of a low-rank matrix, which is a task so-called the matrix completion, (iv) filling missing entries of low-rank as well as piecewise constant matrix by handling two different LiGME penalties.

\renewcommand{\sfT}{\top}
\subsection{Piecewise constant 1-d signal recovery}
\label{section:exam1}
In this section, we present a numerical experiment in a scenario of edge-preserving signal recovery by considering Problem~\ref{prob:optim} with $(\calX,\calY,\calZ) = (\bbR^{N},\bbR^{M},\bbR^{N-1})$, $(N,M) := (128, 100)$, $\Psi = \| \cdot \|_1$, and $\frakL$ being the first order difference operator, i.e.,
\begin{equation}
	\label{def:D}
	\frakL = D := \begin{bmatrix}
		-1&1&&\\
		&\ddots&\ddots&\\
		&&-1&1
	\end{bmatrix} \in \bbR^{(N-1) \times N}.
\end{equation}
In this experiment, entries of $A \in \bbR^{M \times N}$ are drawn from i.i.d. zero-mean white Gaussian noise with unit variance.
The observation $\bmy \in \bbR^{M}$ is generated by $\bmy = A \bmx^{\star} + \varepsilon$, where $\bmx^{\star} \in \bbR^{N}$ is a piecewise constant signal (Figure~\ref{fig:exam1_estimates}: dotted) and $\varepsilon \in \bbR^{M}$ is additive white Gaussian noise. The signal-to-noise ratio (SNR) is -5dB, which is defined as
\begin{equation}
	\label{def:SNR}
	\text{SNR: } 10 \log_{10} \frac{\|\bmx^{\star}\|_{\calX}^2}{\|\varepsilon\|_{\calY}^2} \ \mathrm{[dB]}.
\end{equation}
We compared minimizers of Problem~1, estimated by Algorithm 1, with two penalties: one is the standard convex total variation (TV), i.e., $(\| \cdot \|_1)_{B_{0}} \circ D=(\| \cdot \|_1)_{\rmO_{\calZ}} \circ D = \|\cdot\|_1 \circ D$, the other is a nonconvex LiGME penalty $(\|\cdot\|_1)_{B_{\theta}} \circ D$ whose $B_{\theta} \in \bbR^{(N-1) \times (N-1)}$ is obtained by Proposition~\ref{rem:selectionB} with
\begin{equation}
	\tilde{L} = \tilde{D} := {\left[\,e_1\,|\,D^{\sfT}\,\right]}^{\sfT} \in \bbR^{N \times N},
	\label{eq:choice_tildeD}
\end{equation}
where $\theta = 0.99$ and $e_1=(1,0,\ldots,0)^{\sfT} \in \bbR^N$. Algorithm~1 with $\kappa = 1.001$ and $(\sigma,\tau)$ given in the footnote for Theorem~\ref{def:Tprop}(b) is applied to the minimization problems, where the common initial estimate is set as $(x_0, v_0, w_0)=(0_{\calX}, 0_{\calZ}, 0_{\calZ})$ for all experiments.

In Algorithm~1, $\Prox_{\gamma \| \cdot \|_1}$ for $\gamma \in \bbR_{++}$ can be calculated by the soft-thresholding whose $i$-th component is 
\begin{equation}
	\label{eq:Proxl1}
	\left[\Prox_{\gamma \| \cdot \|_1} \right]_i\colon \bbR^{N-1} \to \bbR \colon z=(z_1,\ldots, z_{N-1})^{\top} \mapsto
         \begin{cases}
		0, & \text{if } |z_i| \leq \gamma, \\
		(|z_i| - \gamma)\frac{z_i}{|z_i|} & \text{otherwise}.
	\end{cases}
\end{equation}

\begin{figure}[ht]
	\centering
	\subfloat[][]{\includegraphics[clip,width=0.45\hsize]{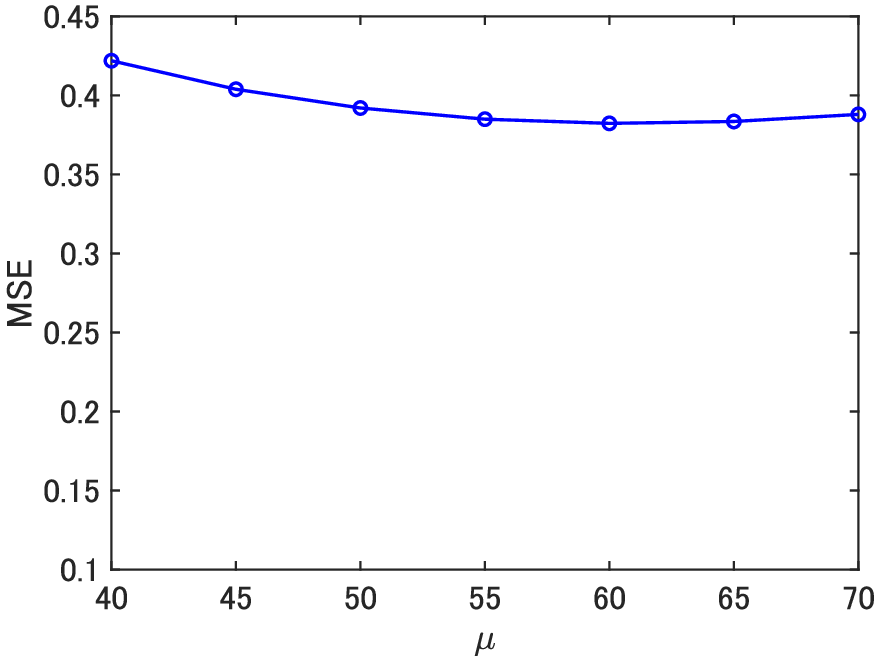}\label{subfig:exam1_tuneTV}} \quad
	\subfloat[][]{\includegraphics[clip,width=0.45\hsize]{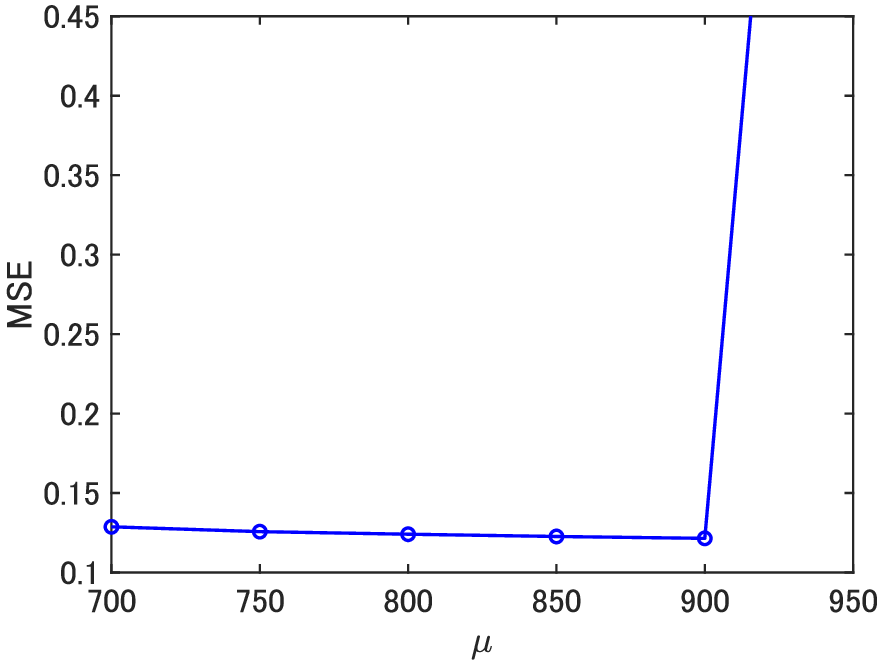}\label{subfig:exam1_tuneLiGME}}
	\caption{{MSE versus $\mu$ in Problem~1 at $k = 15,000$ iteration for (a) the standard convex TV penalty $\| \cdot \|_1 \circ D$ and (b) LiGME penalty $(\| \cdot \|_1)_{B_{\theta}} \circ D$.}}
	\label{fig:exam1_tunes}
\end{figure}
\begin{figure}[ht]
	\centering
	\includegraphics[clip,width=0.60\hsize]{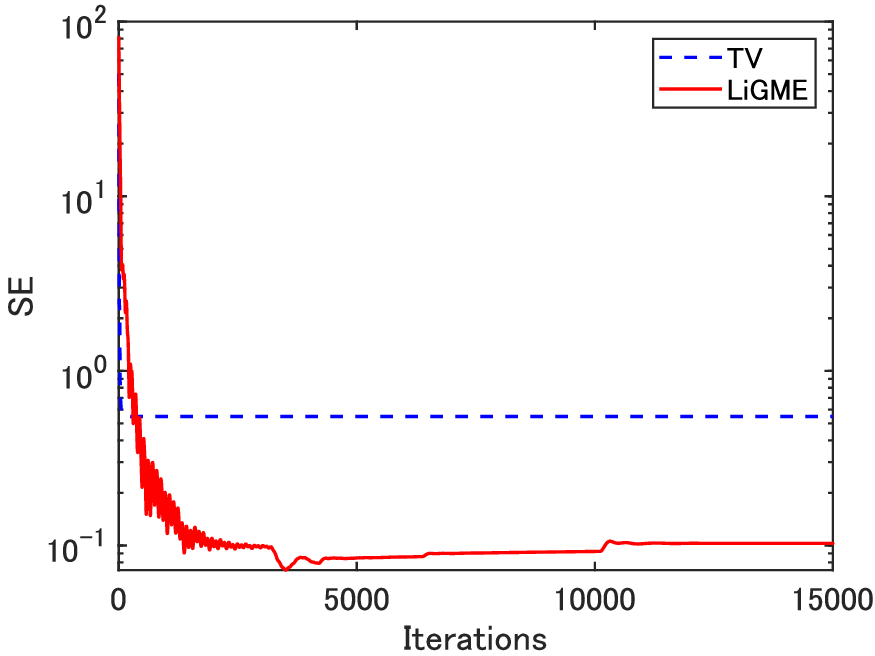}
	\caption{SE versus iterations for TV (dotted blue) and LiGME (solid red).}
	\label{fig:exam1_trace}
\end{figure}
\begin{figure}[ht]
	\centering
	\includegraphics[clip,width=0.60\hsize]{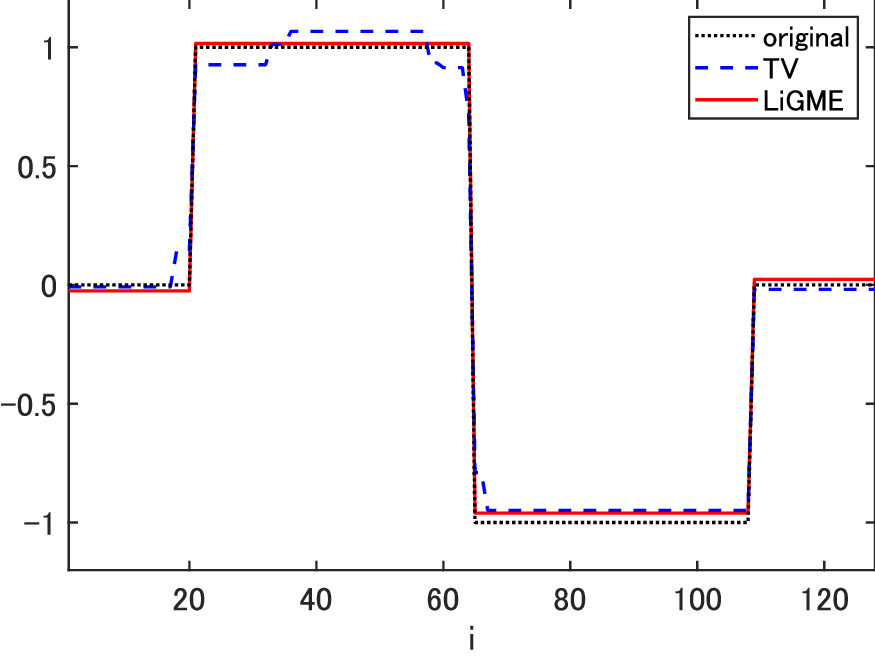}
	\caption{Entries in original piecewise constant signal ($\bmx^{\star}$: dotted black), recovered by the TV penalty ($\bmx_{\mathrm{TV}}$: dashed blue), and by the LiGME penalty ($\bmx_{\mathrm{LiGME}}$: solid red).}
	\label{fig:exam1_estimates}
\end{figure}
\begin{figure}[ht]
	\centering
	\includegraphics[clip,width=0.60\hsize]{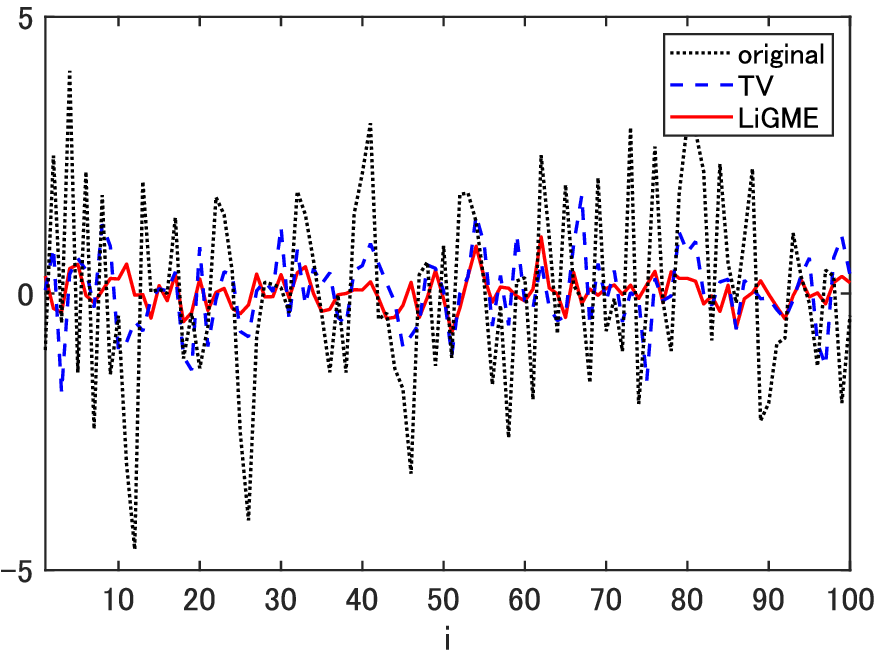}
	\caption{Entries in $\bmy - A\bmx^{\star}$ (dotted black), $A\bmx_{\mathrm{TV}} - A\bmx^{\star}$ (dashed blue), and $A\bmx_{\mathrm{LiGME}} - A\bmx^{\star}$ (solid red), for $\bmx^{\star}$, $\bmx_{\mathrm{TV}}$, and $\bmx_{\mathrm{LiGME}}$ in Figure~\ref{fig:exam1_estimates}.}
	\label{fig:exam1_suppress}
\end{figure}

Figure~\ref{fig:exam1_tunes} shows dependency of recovering performance on the parameter $\mu$ in Problem~\ref{prob:optim}.
The performance is measured by mean squared error (MSE) defined as the average of 
\begin{align}
\label{eq:SE}
\text{squared error (SE): \ } \| \bmx_k - \bmx^{\star}  \|_{\calX}^2
\end{align}
over $100$ independent realizations of the additive noise.
From Figure~\ref{fig:exam1_tunes}, we can see that (i) the best weights of the penalties are respectively $\mu_{\mathrm{TV}} := 60$ for $\| \cdot \|_1 \circ D$ and $\mu_{\mathrm{LiGME}} := 900$ for $(\| \cdot \|_1)_{B_{\theta}} \circ D$ and (ii) the estimation by LiGME penalty with $\mu_{\mathrm{LiGME}}$ outperforms the standard convex TV penalty with $\mu_{\mathrm{TV}}$ in the context of MSE.

Figure~\ref{fig:exam1_trace} shows dependency of the SE on the number of iterations under weights $(\mu_{\mathrm{TV}}, \mu_{\mathrm{LiGME}})$.
The accuracy of the approximation by the LiGME penalty becomes higher than the TV penalty after 400 iterations and SE for LiGME reaches 18.8\% of SE for TV in the end.

Figure~\ref{fig:exam1_estimates} shows the original signal and recovered signals by the penalties at $15,000$ iteration.
The estimation by LiGME $(\| \cdot \|_1)_{B_{\theta}} \circ D$ restores much more successfully the sharp edges than the standard convex TV, which also results in efficient noise suppression at 15,000 iteration depicted in Figure~\ref{fig:exam1_suppress}.

\subsection{Piecewise constant image deblurring}
\label{section:exam2}
We present a numerical experiment in a scenario of image deblurring for piecewise constant  $N$-by-$N$ image by considering Problem~\ref{prob:optim} and Example~3 with $(\mathcal{M}, \calX,\calY, \calZ_1,\calZ_2) = (2, \bbR^{N^2},\bbR^{N^2}, \bbR^{N(N-1)},\bbR^{N(N-1)})$, $N = 16$, $\Psi^{\langle 1 \rangle} = \Psi^{\langle 2 \rangle} = \| \cdot \|_1$, $\mu_1 = \mu_2 = 1$, and $\frakL = \bar{D} := [D_{\mathrm{V}}^{\top}, D_{\mathrm{H}}^{\top}]^{\top}$,
where the vertical difference operator $D_V \in \bbR^{N(N-1) \times N^2}$ and the horizontal difference operator $D_H \in \bbR^{N(N-1) \times N^2}$ are respectively defined as
\begin{equation}
	\label{eq:defD2d}
	\hspace{-1cm}\hspace{-1cm} D_{\mathrm{V}} := \begin{bmatrix}
		D &&&\\
		&D&&\\
		&&\ddots&\\
		&&&D
	\end{bmatrix}, \quad D_{\mathrm{H}} := \begin{bmatrix}
		-1 & \bmzero_{N-1}^{\top} & 1 && \\
		& \ddots & \ddots & \ddots & \\
		&& -1 & \bmzero_{N-1}^{\top} & 1
	\end{bmatrix}
\end{equation}
with $D \in \bbR^{(N-1) \times N}$ in \eqref{def:D}.
The blur matrix\footnote{
	The blur matrix used in this experiment is more ill-conditioned than the random matrix used in Section~4.1.
	The condition number, i.e., the ratio of the maximum singular value to the minimum singular value, of the blur matrix in \eqref{eq:blurmatrix} is about 593, and of the random matrix is about 12.4.
}
$A \in \bbR^{N^2 \times N^2}$ is designed by
\begin{equation}
  \label{eq:blurmatrix}
	A = \bar{A} \otimes \bar{A},
\end{equation}
where $\otimes$ denotes the Kronecker product and the $(i,j)$-entry of the matrix $\bar{A} \in \bbR^{N \times N}$ is given by
\begin{equation}
	\bar{A}_{i, j} := \begin{cases}
		\frac{1}{\sqrt{1.62\pi}} \exp \left( - \frac{{|i-j|}^2}{1.62} \right), & \text{if } |i-j| < 6, \\
		0, & \text{otherwise}.
	\end{cases}
\end{equation}
The observation $\bmy \in \bbR^{N^2}$ (Figure~\ref{fig:exam2_estimates}(b)) is generated by $\bmy = A \bmx^{\star} + \varepsilon$, where $\bmx^{\star} \in \bbR^{N^2}$ is given by the vectorization\footnote{
	The \emph{vectorization} of a matrix (or an image) is the mapping:
	\begin{equation*}
		\mathrm{vec} \colon \bbR^{m \times n} \to \bbR^{mn} \colon A \mapsto [a_1^{\top}, \cdots, a_n^{\top}]^{\top},
	\end{equation*}
	where, for $i \in \{1, \cdots, n\}$, $a_i \in \bbR^{m}$ is the $i$-th column vector of $A$. The inverse mapping of the \emph{vectorization} $\mathrm{vec}$ is denoted by $\mathrm{vec}^{-1}\colon  \bbR^{mn} \to \bbR^{m \times n}$.
        
} of a piecewise constant image (Figure~\ref{fig:exam2_estimates}(a)) and $\varepsilon \in \bbR^{N^2}$ is additive white Gaussian noise. The signal-to-noise ratio (SNR) defined in \eqref{def:SNR} is 20dB.
We compared minimizers of Problem~1, estimated by Algorithm 1, with two penalties: one is the anisotropic TV, i.e.,
$$
(\|\cdot\|_1)_{\rmO_{\calZ}} \circ D_{\mathrm{V}} + (\|\cdot\|_1)_{\rmO_{\calZ}} \circ D_{\mathrm{H}} = \|\cdot\|_1 \circ D_{\mathrm{V}} + \|\cdot\|_1 \circ D_{\mathrm{H}},
$$
the other is a LiGME penalty $(\|\cdot\|_1)_{B_{\theta}} \circ \bar{D}$ whose $B_{\theta}=\begin{bmatrix}
  B_{\theta_1} & \rmO_{N(N-1)} \\
  \rmO_{N(N-1)} & B_{\theta_2} 
\end{bmatrix}
\in \bbR^{2N(N-1) \times 2N(N-1)}$ is obtained by Corollary~\ref{col:selectionBmultiple} with $\theta_1=\theta_2 = 0.99$, $\omega_1=\omega_2=1/2$, and $(\tilde{\mathfrak{L}}_1, \tilde{\mathfrak{L}}_2)$ given as
\begin{equation}
\label{eq:choices_tildeD2}
\tilde{\mathfrak{L}}_1 = \tilde{D}_{\mathrm{V}} := \begin{bmatrix}
E \\
D_{\mathrm{V}}
\end{bmatrix} \in \bbR^{N^2 \times N^2}, \quad \tilde{\mathfrak{L}}_2 = \tilde{D}_{\mathrm{H}} := \begin{bmatrix}
\rmI_{N} & \rmO_{N \times N(N-1)} \\
\multicolumn{2}{c}{D_{\mathrm{H}}}
\end{bmatrix} \in \bbR^{N^2 \times N^2},
\end{equation}
where the $(i,j)$-entry of $E \in \bbR^{N \times N^2}$ is defined as
\begin{equation}
	E_{i,j} := \begin{cases}
		1, & \text{if } (i-1)N+1 = j \\
		0, & \text{otherwise}.
	\end{cases}
\end{equation}
Algorithm~1 with $\kappa = 1.001$ and $(\sigma,\tau)$ given in the footnote for Theorem~\ref{def:Tprop}(b) is applied to the minimization problems, where the common initial estimate is set as $(x_0, v_0, w_0)=(0_{\calX}, 0_{\calZ}, 0_{\calZ})$ for all experiments. The operator $\Prox_{\gamma \|\cdot\|_1}$ for $\gamma \in \bbR_{++}$ in Algorithm~1 can be calculated by \eqref{eq:Proxl1}.

\begin{figure}[ht]
	\centering
	\subfloat[][]{\includegraphics[clip,width=0.45\hsize]{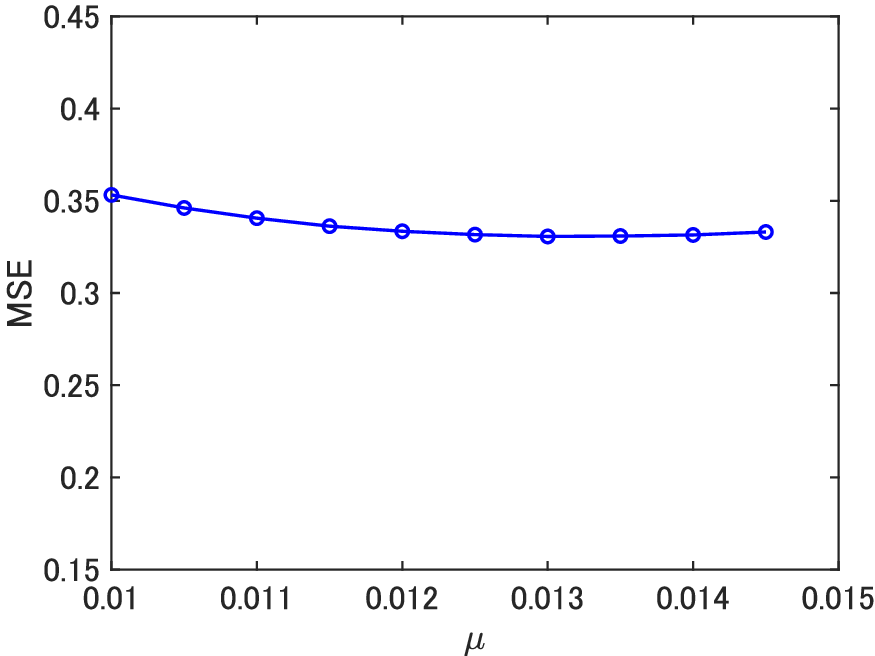}\label{subfig:exam2_tuneTV}} \quad
	\subfloat[][]{\includegraphics[clip,width=0.45\hsize]{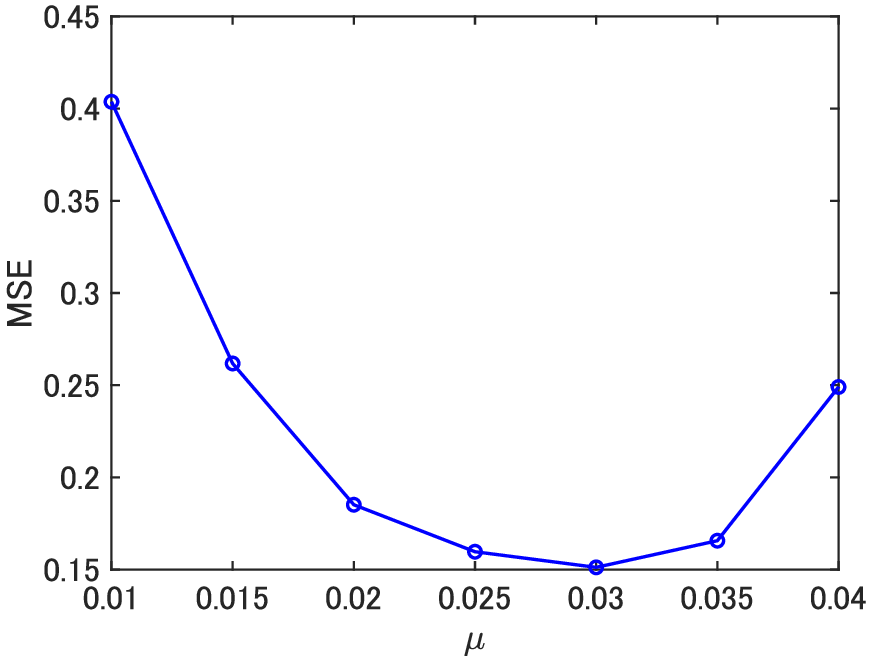}\label{subfig:exam2_tuneLiGME}}
	\caption{{MSE versus $\mu$ in Problem~1 at $k = 5,000$ iteration for (a) the anisotropic TV penalty $\| \cdot \|_1 \circ \bar{D}$ and (b) LiGME penalty $(\| \cdot \|_1)_{B_{\theta}} \circ \bar{D}$.}}
	\label{fig:exam2_tunes}
\end{figure}
\begin{figure}[ht]
	\centering
	\includegraphics[clip,width=0.60\hsize]{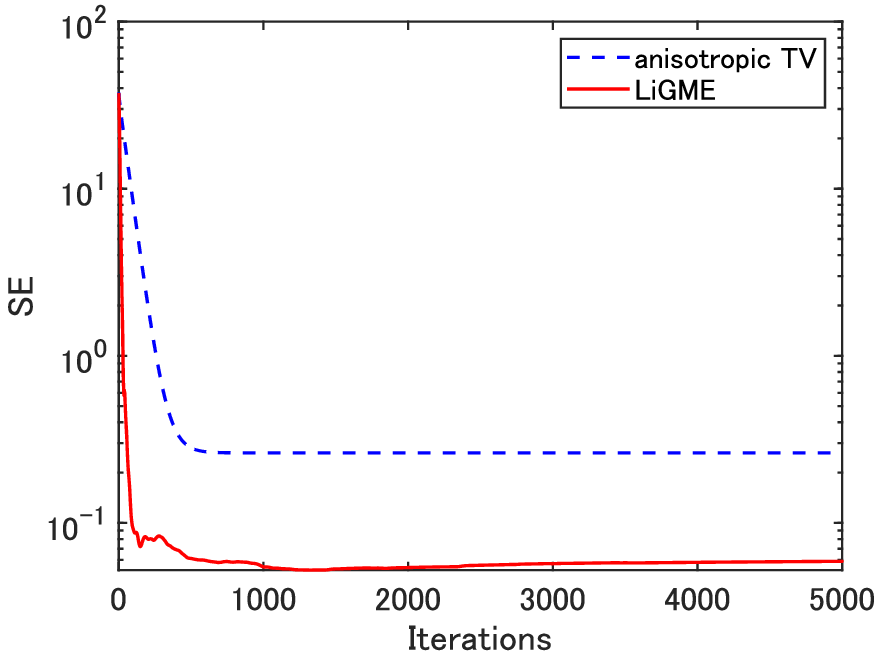}
	\caption{SE versus iterations for anisotropic TV (dotted blue) and LiGME (solid red).}
	\label{fig:exam2_trace}
\end{figure}
\begin{figure}[ht]
	\centering
	\subfloat[][]{\includegraphics[clip,width=0.45\hsize]{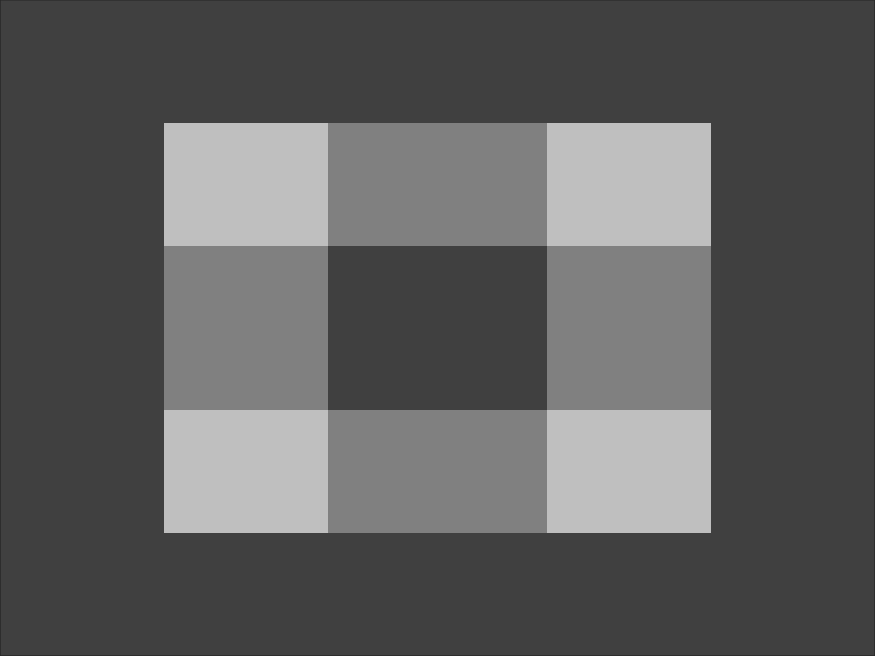}\label{subfig:exam2_x}} \quad
	\subfloat[][]{\includegraphics[clip,width=0.45\hsize]{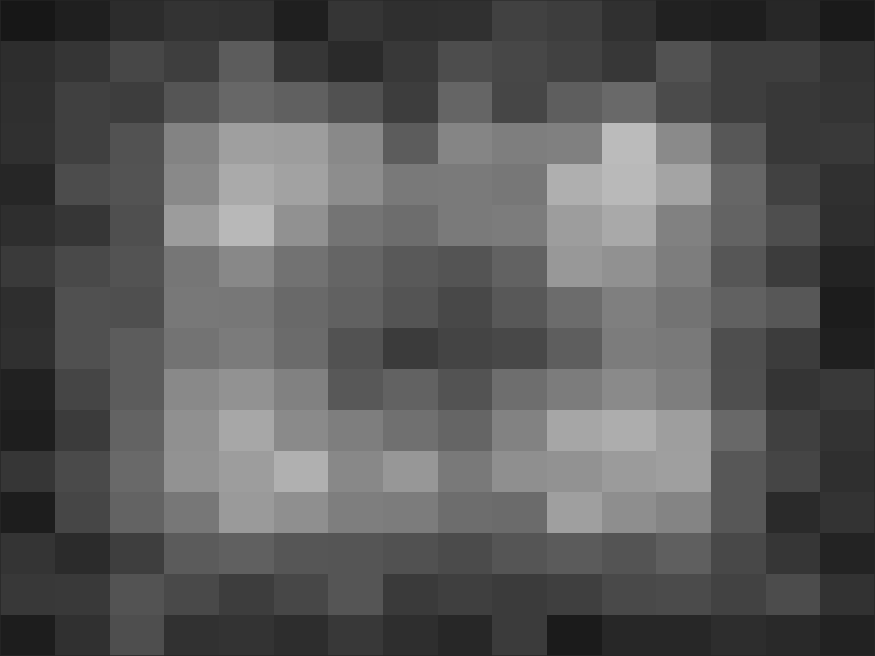}\label{subfig:exam2_y}} \quad
	\subfloat[][]{\includegraphics[clip,width=0.45\hsize]{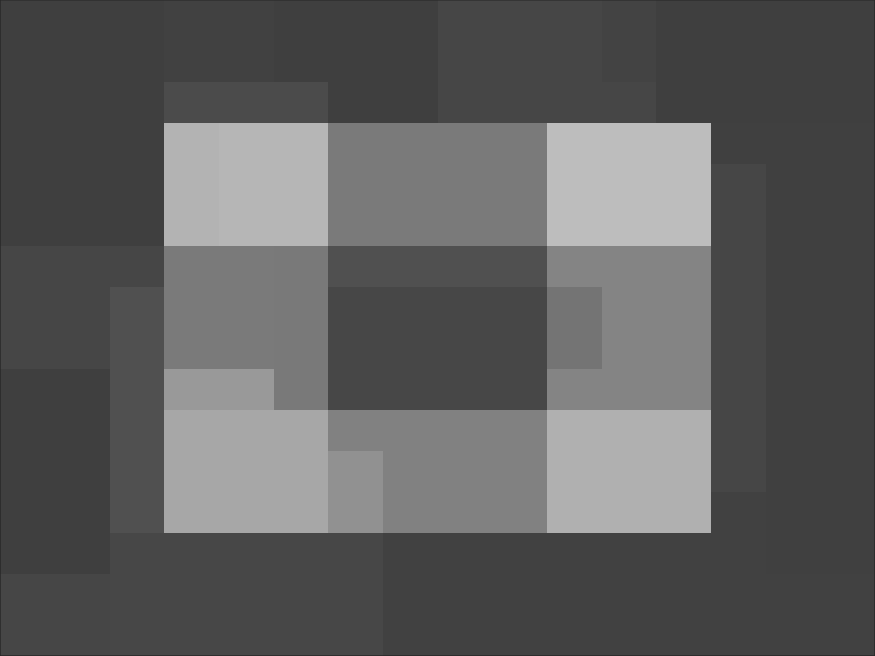}\label{subfig:exam2_estimateTV}} \quad
	\subfloat[][]{\includegraphics[clip,width=0.45\hsize]{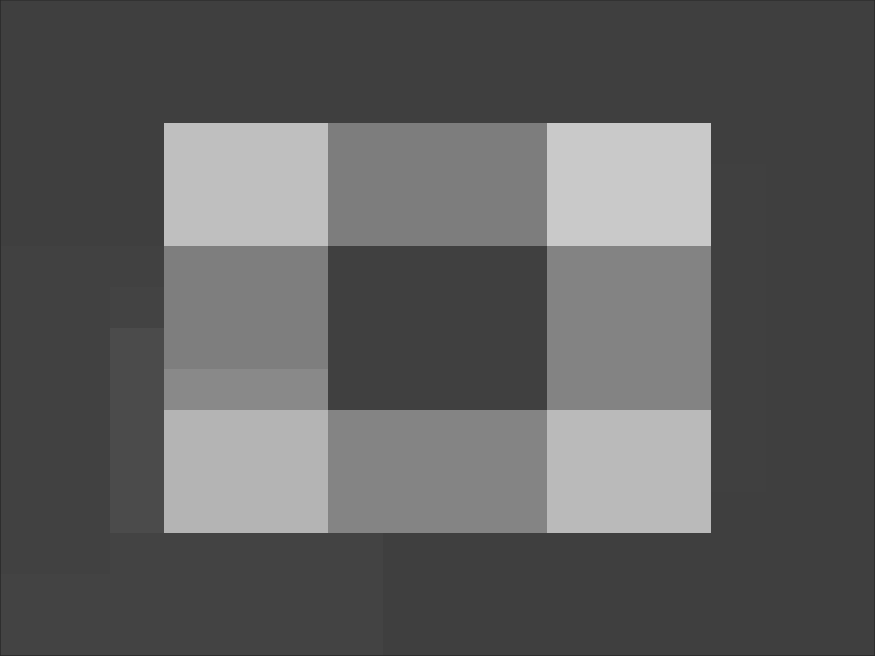}\label{subfig:exam2_estimateLiGME}} \quad
	\caption{{(a) original piecewise constant image whose pixels belong to $\{0.25, 0.50, 0.75\}$, (b) a noisy blurred image, (c) estimated image by using anisotropic TV penalty at $k = 5,000$ iteration, (d) estimated image by using LiGME penalty at $k = 5,000$ iteration. Each pixel is assigned a real value and displayed with under -0.2 in black and over 1.2 in white.}}
	\label{fig:exam2_estimates}
\end{figure}
Figure~\ref{fig:exam2_tunes} shows dependency of recovering performance on the parameter $\mu$ in Problem~\ref{prob:optim}.
The performance is measured by mean squared error (MSE) defined as the average of SE in \eqref{eq:SE} over $100$ independent realizations of the additive noise.
From Figure~\ref{fig:exam2_tunes}, we can see that (i) the best weights of the penalties are respectively $\mu_{\mathrm{TV}} := 0.013$ for $\| \cdot \|_1 \circ \bar{D}$ and $\mu_{\mathrm{LiGME}} := 0.03$ for $(\| \cdot \|_1)_{B_{\theta}} \circ \bar{D}$ and (ii) the estimation by LiGME penalty with $\mu_{\mathrm{LiGME}}$ outperforms the anisotropic TV penalty with $\mu_{\mathrm{TV}}$ in the context of MSE.

Figure~\ref{fig:exam2_trace} shows dependency of the SE on the number of iterations under weights $(\mu_{\mathrm{TV}}, \mu_{\mathrm{LiGME}})$.
The accuracy of the approximation by the LiGME penalty becomes higher than the anisotropic TV penalty from the beginning and SE for LiGME reaches 22.4\% of SE for anisotropic TV in the end.

Figure~\ref{fig:exam2_estimates} shows the original image, an observed image, and recovered images by the penalties at $5,000$ iteration.
The deblurring by LiGME $(\| \cdot \|_1)_{B_{\theta}} \circ \bar{D}$ restores much more successfully the sharp edges than the anisotropic TV.

\subsection{Matrix completion by promoting low-rankness}
\label{section:exam3}
We present a numerical experiment in a scenario of matrix completion by considering Problem~\ref{prob:optim} with $(\calX,\calY,\calZ) = (\bbR^{N^2},\bbR^{N^2},\bbR^{N^2})$, $N = 16$, $\Psi = \|{\rm vec}^{-1} (\cdot)\|_{\rm nuc}$ defined in Example~\ref{example1:LiGME}(c), and $\frakL = \rmId$.
In this experiment, the $(i,j)$-entry of $A$ is given by
\begin{equation}
	\label{eq:defmissing}
	A_{i,j} = \begin{cases}
		1, & \text{if } i = j \in \Omega, \\
		0, & \text{otherwise},
	\end{cases}
\end{equation}
where $\Omega \subset \{1,\dots,N^2\}$ satisfies $\#\Omega = N^2-M$ with $M = 64$, i.e., $25\%$ of entries are missing.
The matrix $A^{\sfT}A$ is singular because $\mathrm{rank}(A) = N^2 - M$.
The observation $\bmy \in \bbR^{N^2}$ (Figure~\ref{fig:exam3_estimates}(b)) is generated by $\bmy = A \bmx^{\star} + \varepsilon$, where $\bmx^{\star} \in \bbR^{N^2}$ is given by the vectorization of a low-rank matrix (Figure~\ref{fig:exam3_estimates}(a)) and $\varepsilon \in \bbR^{N^2}$ is additive white Gaussian noise. The signal-to-noise ratio (SNR) defined in \eqref{def:SNR} is 30dB.
We compared minimizers of Problem~1, estimated by Algorithm 1, with two penalties: one is the nuclear norm, i.e., $(\|{\rm vec}^{-1} (\cdot)\|_{\rm nuc})_{B_{0}}=(\|{\rm vec}^{-1} (\cdot)\|_{\rm nuc})_{\rmO_{\calZ}} = \|{\rm vec}^{-1} (\cdot)\|_{\rm nuc}$, the other is a LiGME penalty $(\|{\rm vec}^{-1}(\cdot)\|_{\rm nuc})_{B_{\theta}}$ whose $B_{\theta} \in \bbR^{N^2 \times N^2}$ is obtained by Proposition~\ref{rem:selectionB} with $\theta = 0.99$ and $\tilde{\frakL} = \rmId$.
Algorithm~1 with $\kappa = 1.001$ and $(\sigma,\tau)$ given in the footnote for Theorem~\ref{def:Tprop}(b) is applied to the minimization problems, where the common initial estimate is set as $(x_0, v_0, w_0)=(0_{\calX}, 0_{\calZ}, 0_{\calZ})$ for all experiments. In Algorithm~1, the operator $\Prox_{\gamma \|{\rm vec}^{-1}(\cdot) \|_{\rm nuc}}$ for $\gamma \in \bbR_{++}$ can be calculated by
\begin{equation}
	\label{eq:Proxnuclear}
	\Prox_{\gamma \|{\rm vec}^{-1}(\cdot) \|_{\rm nuc}}(z) = \mathrm{vec} \left( U \mathrm{diag}(\Prox_{\gamma \| \cdot \|_1} ([\sigma_1, \dots, \sigma_N]^{\sfT})) V^{\sfT} \right),
\end{equation}
where $U \mathrm{diag}([\sigma_1, \dots, \sigma_N]) V^{\sfT} \ (\sigma_1 \geq \cdots \geq \sigma_N \geq 0)$ is a singular value decomposition of $\mathrm{vec}^{-1}(z) \in \bbR^{N \times N}$.

\begin{figure}[t]
	\centering
	\subfloat[][]{\includegraphics[clip,width=0.45\hsize]{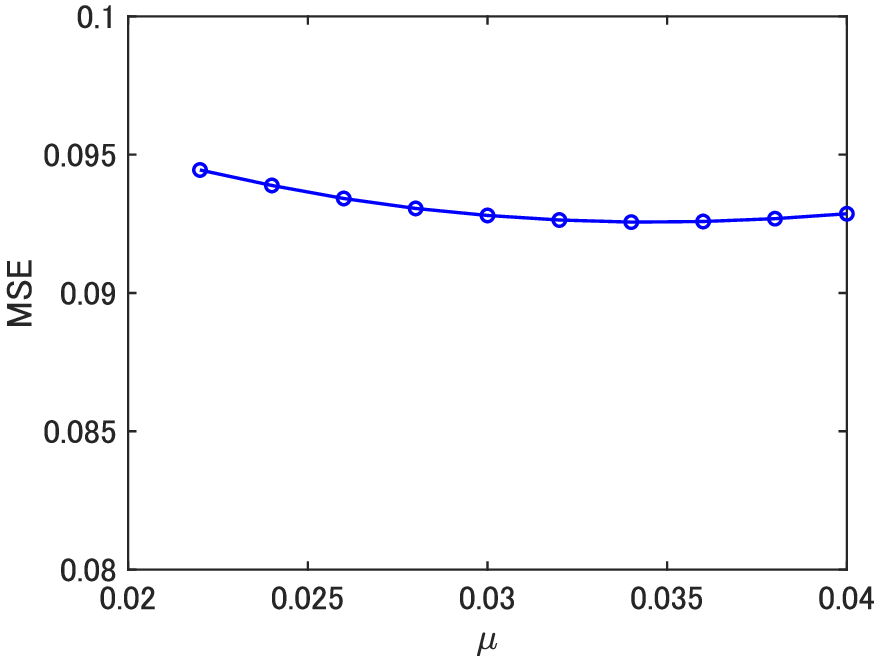}\label{subfig:exam3_tuneNC}} \quad
	\subfloat[][]{\includegraphics[clip,width=0.45\hsize]{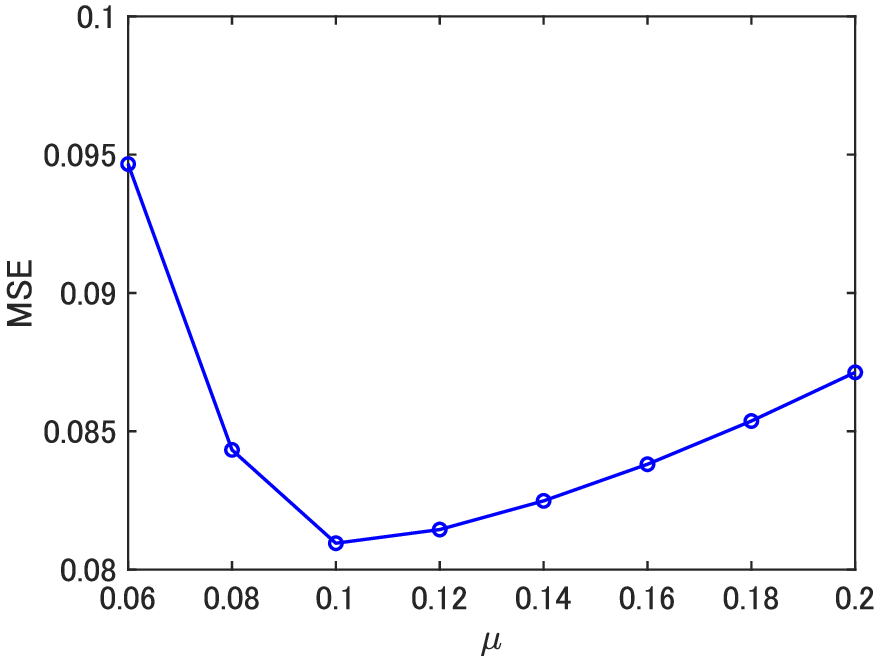}\label{subfig:exam3_tuneLiGME}} \quad
	\caption{{MSE versus $\mu$ in Problem~1 at $k = 500$ iteration for (a) the nuclear norm penalty $\| \cdot \|_{\rm nuc}$ and (b) LiGME penalty $(\| \cdot \|_{\rm nuc})_{B_{\theta}}$.}}
	\label{fig:exam3_tunes}
\end{figure}
\begin{figure}[t]
	\centering
	\includegraphics[clip,width=0.60\hsize]{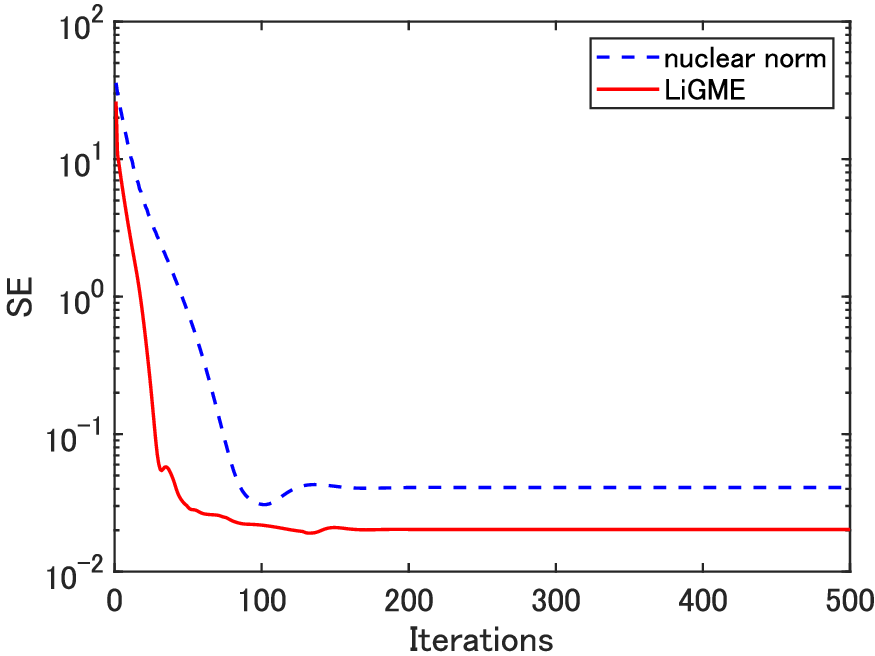}
	\caption{SE versus iterations for the nuclear norm (dotted blue) and LiGME (solid red).}
	\label{fig:exam3_trace}
\end{figure}
\begin{figure}[ht]
	\centering
	\subfloat[][]{\includegraphics[clip,width=0.45\hsize]{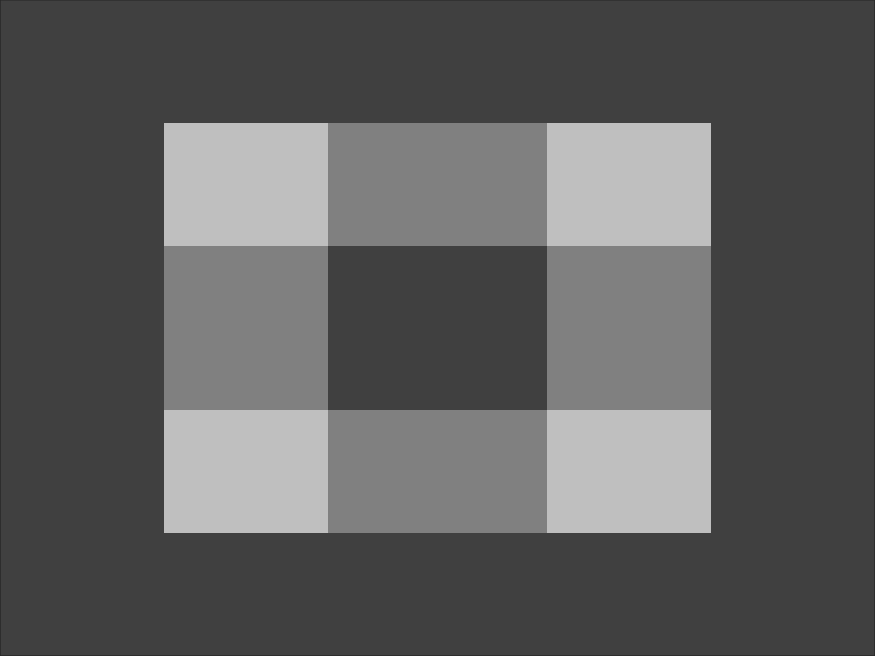}\label{subfig:exam3_x}} \quad
	\subfloat[][]{\includegraphics[clip,width=0.45\hsize]{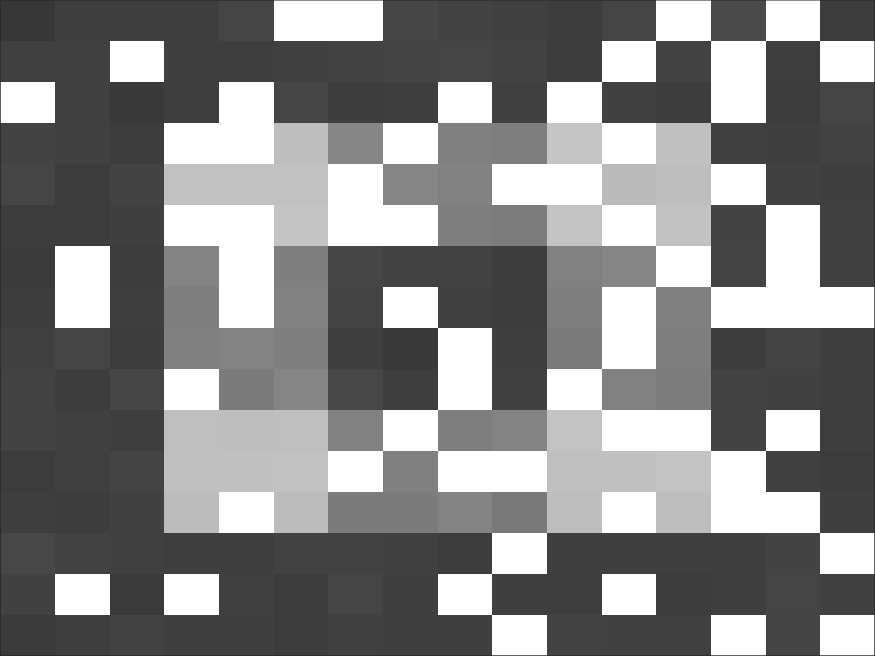}\label{subfig:exam3_y}} \quad
	\subfloat[][]{\includegraphics[clip,width=0.45\hsize]{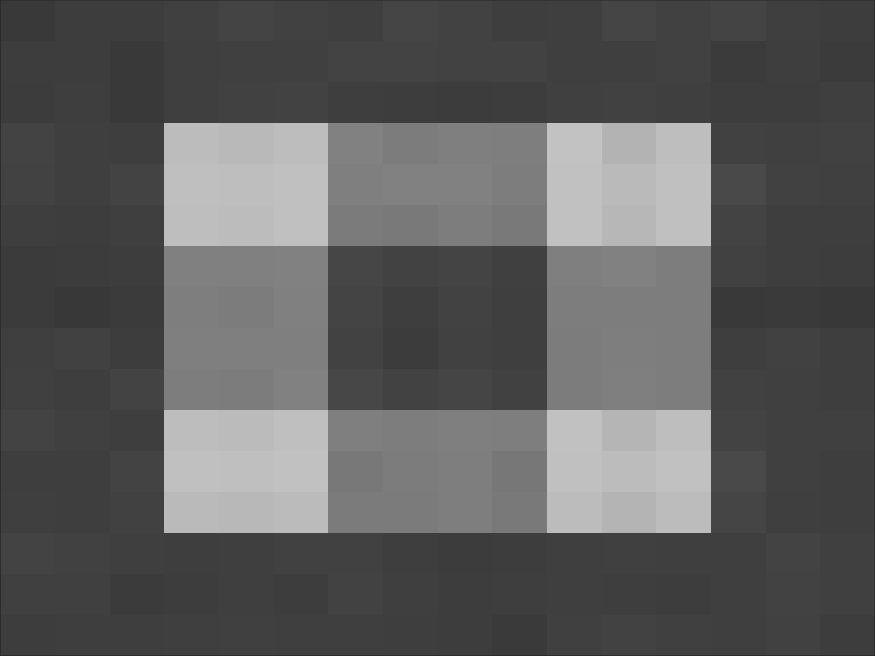}\label{subfig:exam3_estimateONE}} \quad
	\subfloat[][]{\includegraphics[clip,width=0.45\hsize]{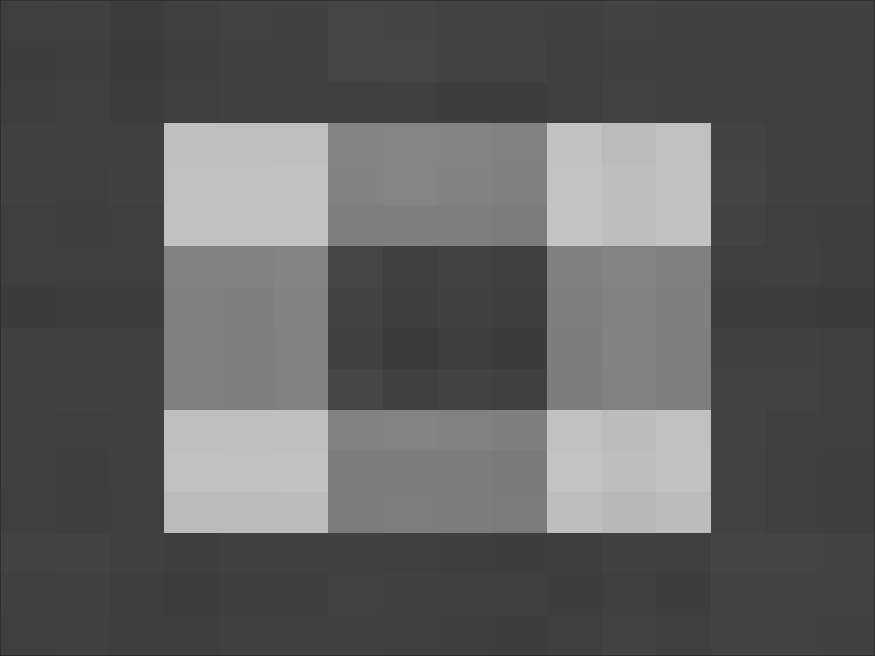}\label{subfig:exam3_estimateGMC}} \quad
	\caption{{(a) Original low-rank matrix whose rank is 3, (b) observed matrix whose missing entries are displayed in white, (c) estimated matrix by using the nuclear norm penalty at $k = 500$ iteration, (d) estimated matrix by using LiGME penalty at $k = 500$ iteration. Each entry is displayed with under -0.2 in black and over 1.2 in white.}}
	\label{fig:exam3_estimates}
\end{figure}

\begin{table}[ht]
	\centering
	\caption{Singular values $\sigma_1 \geq \cdots \geq \sigma_{16} \geq 0$ of the original and estimated matrices in Figure~\ref{fig:exam3_estimates} and the numerical rank (num-rank) which is defined as the number of singular values greater than $10^{-8}$.}
	\label{table:exam3_singularvalues}
        {\small
	\begin{tabular}{lccccccc} \hline \hline
		singular values & $\sigma_1$ & $\sigma_2$ & $\sigma_3$ & $\sigma_4$ & $\cdots$ & $\sigma_{16}$ & num-rank \\ \hline
		original & $6.48 \times 10^{0}$ & $9.01 \times 10^{-1}$ & $3.85 \times 10^{-1}$ & 0 & $\cdots$ & 0 & 3 \\
		nuclear norm & $6.42 \times 10^{0}$ & $8.55 \times 10^{-1}$ & $3.38 \times 10^{-1}$ & $6.66 \times 10^{-2}$ & $\cdots$ & $6.52 \times 10^{-11}$ & 8 \\
		LiGME & $6.48 \times 10^{0}$ & $9.11 \times 10^{-1}$ & $3.89 \times 10^{-1}$ & $1.10 \times 10^{-14}$ & $\cdots$ & $8.26 \times 10^{-17}$ & 3 \\ \hline 
	\end{tabular}
        }
\end{table}
Figure~\ref{fig:exam3_tunes} shows dependency of recovering performance on the parameter $\mu$ in Problem~\ref{prob:optim}.
The performance is measured by mean squared error (MSE) defined as the average of SE in \eqref{eq:SE} over $100$ independent realizations of the additive noise.
From Figure~\ref{fig:exam3_tunes}, we can see that (i) the best weights of the penalties are respectively $\mu_{\mathrm{nuc}} := 0.034$ for $\| \cdot \|_{\rm nuc}$ and $\mu_{\mathrm{LiGME}} := 0.1$ for $(\| \cdot \|_{\rm nuc})_{B_{\theta}}$ and (ii) the estimation by LiGME penalty with $\mu_{\mathrm{LiGME}}$ outperforms the nuclear norm penalty with $\mu_{\mathrm{nuc}}$ in the context of MSE.

Figure~\ref{fig:exam3_trace} shows dependency of the SE on the number of iterations under weights $(\mu_{\mathrm{nuc}}, \mu_{\mathrm{LiGME}})$.
The accuracy of the approximation by the LiGME penalty becomes higher than the nuclear norm penalty from the  beginning and SE for LiGME reaches 49.5\% of SE for nuclear norm in the end.

Figure~\ref{fig:exam3_estimates} shows the original matrix, an observed matrix, and recovered matrices by the penalties at $500$ iteration and Table~\ref{table:exam3_singularvalues} shows the singular values of the original matrix and the recovered matrices in Figure~\ref{fig:exam3_estimates}.
In the context of singular values in Table~\ref{table:exam3_singularvalues}, the recovered matrix by the LiGME penalty more accurately approximates the original than by the nuclear norm penalty.
Especially, the number of singular values greater than $10^{-8}$ (num-rank) of the recovered matrix by the LiGME is equal to of the original.

\subsection{Matrix completion by promoting low-rankness and smoothness}
\label{section:exam4}
We present a numerical experiment in a scenario of matrix completion by considering Problem~\ref{prob:optim} and Example~3 with $(\mathcal{M}, \calX, \calY, \calZ_1, \calZ_2, \calZ_3) = (3, \bbR^{N^2}, \bbR^{N^2}, \bbR^{N(N-1)},$ $\bbR^{N(N-1)}, \bbR^{N^2})$, $N=16$, $(\Psi^{\langle1\rangle}, \Psi^{\langle2\rangle}, \Psi^{\langle3\rangle}) = ({\| \cdot \|_1}, {\| \cdot \|_1}, {\| {\rm vec}^{-1} (\cdot) \|_{\rm nuc}})$, $\mu = 1$, $(\frakL_1, \frakL_2, \frakL_3) = (D_{\mathrm{V}}, D_{\mathrm{H}}, \rmId)$ defined in \eqref{eq:defD2d}.
In this experiment, for $\Omega \subset \{1,\dots,N^2\}$ with $\#\Omega = N^2-M$ and $M = 64$, the $(i,j)$-entry of $A$ is given by \eqref{eq:defmissing}, which satisfies $\mathrm{rank}(A) = N^2-M$.
The observation $\bmy \in \bbR^{N^2}$ (Figure~\ref{fig:exam4_estimates}(b)) is generated by $\bmy = A \bmx^{\star} + \varepsilon$, where $\bmx^{\star} \in \bbR^{N^2}$ is given by the vectorization of a piecewise constant image (Figure~\ref{fig:exam4_estimates}(a)) and $\varepsilon \in \bbR^{N^2}$ is additive white Gaussian noise. The signal-to-noise ratio (SNR) defined in \eqref{def:SNR} is 20dB, which is lower than the SNR set in Section~4.3.

We compared minimizers of Problem~1, estimated by Algorithm~1, with four penalties:
\begin{align*}
  \text{(i) } \Psi_{\rm \numI} \circ \frakL & := \mu_{a} \left[ (\| \cdot \|_1)_{B^{\langle 1\rangle}_{0}} \circ D_{\mathrm{V}} + (\| \cdot \|_1)_{B^{\langle 2\rangle}_{0}} \circ D_{\mathrm{H}}\right]  + \mu_{b} (\| {\rm vec}^{-1} (\cdot) \|_{\rm nuc})_{B^{\langle 3\rangle}_{0}} \\
  \phantom{\text{(i) } \Psi_{\rm \numI} \circ \frakL} & =
                                                        \mu_a \left[ (\| \cdot \|_1)_{\rmO_{\calZ_1}} \circ D_{\mathrm{V}} + (\| \cdot \|_1)_{\rmO_{\calZ_2}} \circ D_{\mathrm{H}} \right] + \mu_b (\| {\rm vec}^{-1} (\cdot) \|_{\rm nuc})_{\rmO_{\calZ_3}}, \\
 \text{(ii) } \Psi_{\rm \numII} \circ \frakL & :=
                                               \mu_a \left[ (\| \cdot \|_1)_{B^{\langle 1\rangle}_{\theta_1}} \circ D_{\mathrm{V}} + (\| \cdot \|_1)_{B^{\langle 2\rangle}_{\theta_2}} \circ D_{\mathrm{H}} \right] + \mu_b (\| {\rm vec}^{-1} (\cdot) \|_{\rm nuc})_{B^{\langle 3\rangle}_{0}} \\
  \phantom{ \text{(ii) } \Psi_{\rm \numII} \circ \frakL } & =
                                                            \mu_a \left[(\| \cdot \|_1)_{B^{\langle 1\rangle}_{\theta_1}} \circ D_{\mathrm{V}}  + (\| \cdot \|_1)_{B^{\langle 2\rangle}_{\theta_2}} \circ D_{\mathrm{H}} \right] + \mu_b (\| {\rm vec}^{-1} (\cdot) \|_{\rm nuc})_{\rmO_{\calZ_3}}, \\
  \text{(iii) } \Psi_{\rm \numIII} \circ \frakL & := \mu_a \left[ (\| \cdot \|_1)_{B^{\langle 1\rangle}_{0}} \circ D_{\mathrm{V}} + (\| \cdot \|_1)_{B^{\langle 2\rangle}_{0}} \circ D_{\mathrm{H}} \right] + \mu_b (\| {\rm vec}^{-1} (\cdot) \|_{\rm nuc})_{B^{\langle 3\rangle}_{\theta_3}} \\
  \phantom{  \text{(iii) } \Psi_{\rm \numIII} \circ \frakL } & =
                                                               \mu_a \left[ (\| \cdot \|_1)_{\rmO_{\calZ_1}} \circ D_{\mathrm{V}} + (\| \cdot \|_1)_{\rmO_{\calZ_2}} \circ D_{\mathrm{H}} \right] + \mu_b (\| {\rm vec}^{-1} (\cdot) \|_{\rm nuc})_{B^{\langle 3\rangle}_{\theta_3}}, \\
  \text{(iv) } \Psi_{\rm \numIV} \circ \frakL & := \mu_a \left[ (\| \cdot \|_1)_{B^{\langle 1\rangle}_{\theta_1}} \circ D_{\mathrm{V}} + (\| \cdot \|_1)_{B^{\langle 2\rangle}_{\theta_2}} \circ D_{\mathrm{H}} \right] + \mu_b (\| {\rm vec}^{-1} (\cdot) \|_{\rm nuc})_{B^{\langle 3\rangle}_{\theta_3}}.
\end{align*}
In each penalty, $B^{\langle i \rangle}_{\theta_i}$ $(i=1,2,3)$ are obtained by Corollary~\ref{col:selectionBmultiple} with $\mu_1=\mu_2=\mu_a$, $\mu_3=\mu_b$, $\omega_1=\omega_2=\omega_3=1/3$, $\theta_1=\theta_2=\theta_3=0.99$, $(\tilde{\frakL}_1, \tilde{\frakL}_2, \tilde{\frakL}_3) = (\tilde{D}_{\mathrm{V}}, \tilde{D}_{\mathrm{H}}, \rmId)$ and $(\tilde{D}_{\mathrm{V}}, \tilde{D}_{\mathrm{H}})$ defined in \eqref{eq:choices_tildeD2}.
Algorithm~1 with $\kappa = 1.001$ and $(\sigma,\tau)$ given in the footnote for Theorem~\ref{def:Tprop}(b) is applied to the minimization problems, where the common initial estimate is set as $(x_0, v_0, w_0)=(0_{\calX}, 0_{\calZ}, 0_{\calZ})$ for all experiments. The operator $\Prox_{\gamma \Psi}$ for $\gamma \in \bbR_{++}$ can be calculated by
\begin{align}
	\hspace{-15mm}\Prox_{\gamma \Psi} &\colon \calZ_1 \times \calZ_2 \times \calZ_3 \to \calZ_1 \times \calZ_2 \times \calZ_3 \\ \nonumber
	&\colon (z_1, z_2, z_3) \mapsto (\Prox_{\gamma \| \cdot \|_1}(z_1), \Prox_{\gamma \| \cdot \|_1}(z_2), \Prox_{\gamma \| {\rm vec}^{-1} (\cdot) \|_{\rm nuc}}(z_3)),  \label{eq:Proxprod}
\end{align}
where $\Prox_{\gamma \| \cdot \|_1}$ and $\Prox_{\gamma \| {\rm vec}^{-1} (\cdot) \|_{\rm nuc}}$ are given by \eqref{eq:Proxl1} and \eqref{eq:Proxnuclear} respectively.

\begin{figure}[t]
	\centering
	\subfloat[][]{\includegraphics[clip,width=0.45\hsize]{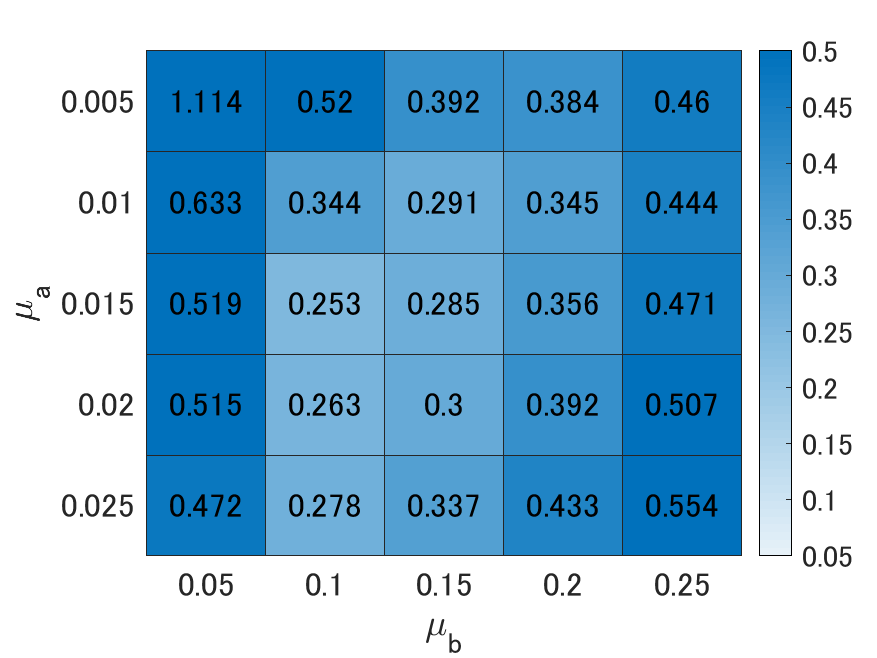}\label{subfig:exam4_tuneONE}} \quad
	\subfloat[][]{\includegraphics[clip,width=0.45\hsize]{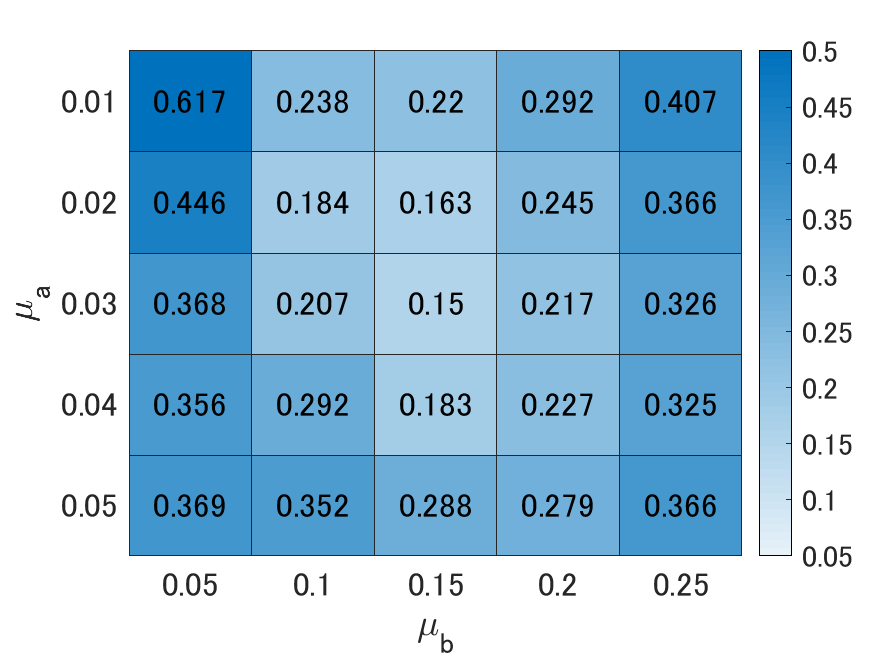}\label{subfig:exam4_tuneDO}} \quad
	\subfloat[][]{\includegraphics[clip,width=0.45\hsize]{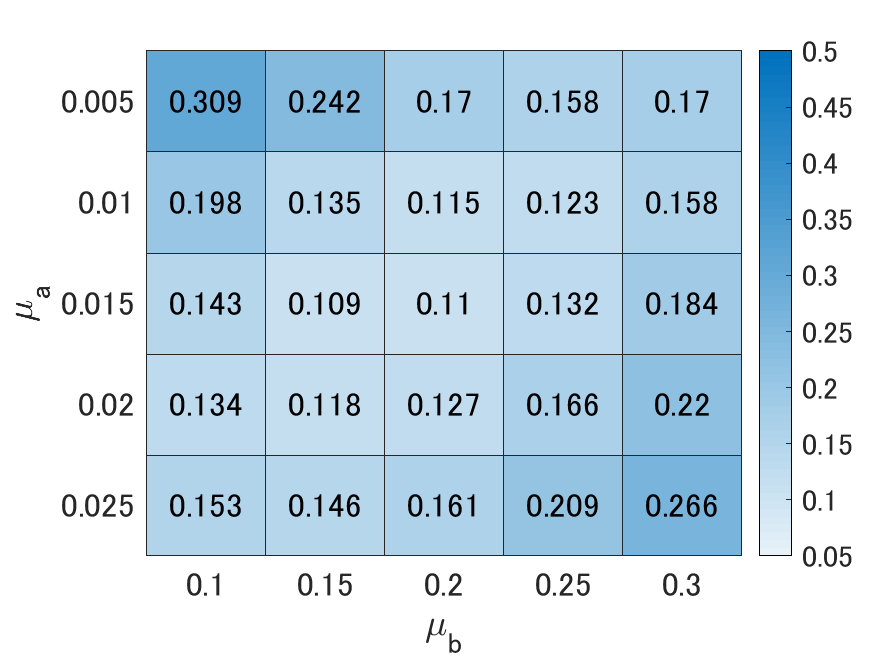}\label{subfig:exam4_tuneOL}} \quad
	\subfloat[][]{\includegraphics[clip,width=0.45\hsize]{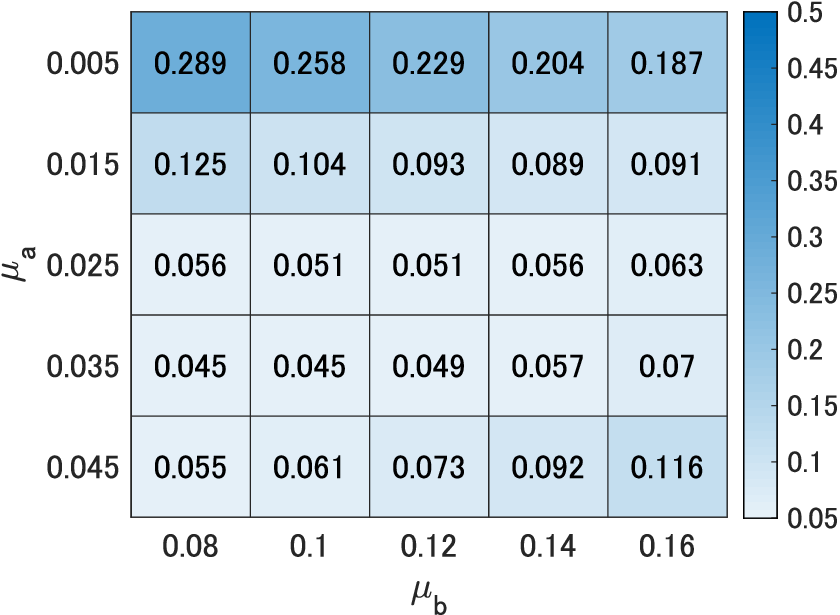}\label{subfig:exam4_tuneLiGME}} \quad
	\caption{{MSE versus $(\mu_a, \mu_b)$ in Problem~1 at $k = 1,000$ iteration for penalties (a) $\Psi_{\rm \numI} \circ \frakL := \mu_a [ \| \cdot \|_1 \circ D_{\mathrm{V}} + \| \cdot \|_1 \circ D_{\mathrm{V}} ] + \mu_b \| {\rm vec}^{-1} (\cdot) \|_{\rm nuc}$, (b) $\Psi_{\rm \numII} \circ \frakL := \mu_a [ (\| \cdot \|_1)_{B^{\langle 1\rangle}_{\theta}} \circ D_{\mathrm{V}} + (\| \cdot \|_1)_{B^{\langle 2\rangle}_{\theta}} \circ D_{\mathrm{H}} ] + \mu_b \| {\rm vec}^{-1} (\cdot) \|_{\rm nuc}$, (c) $\Psi_{\rm \numIII} \circ \frakL := \mu_a [ \| \cdot \|_1 \circ D_{\mathrm{V}} + \| \cdot \|_1 \circ D_{\mathrm{H}}] + \mu_b (\| {\rm vec}^{-1} (\cdot) \|_{\rm nuc})_{B^{\langle 3\rangle}_{\theta}}$, (d) $\Psi_{\rm \numIV} \circ \frakL := \mu_a [ (\| \cdot \|_1)_{B^{\langle 1\rangle}_{\theta}} \circ D_{\mathrm{V}} + (\| \cdot \|_1)_{B^{\langle 2\rangle}_{\theta}} \circ D_{\mathrm{H}} ] + \mu_b (\| {\rm vec}^{-1} (\cdot) \|_{\rm nuc})_{B^{\langle 3\rangle}_{\theta}}$.}}
	\label{fig:exam4_tunes}
\end{figure}
\begin{figure}[ht]
	\centering
	\includegraphics[clip,width=0.60\hsize]{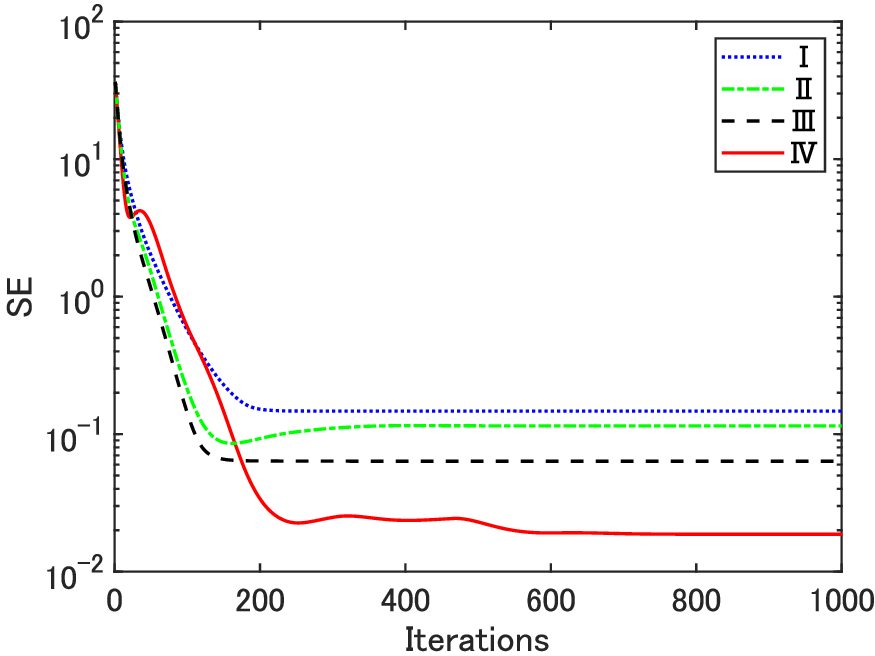}
	\caption{SE versus iterations for penalties $\Psi_{\rm \numI} \circ \frakL$(dotted blue), $\Psi_{\rm \numII} \circ \frakL$(dash-dotted green), $\Psi_{\rm \numIII} \circ \frakL$(dashed black), and $\Psi_{\rm \numIV} \circ \frakL$(solid red).}
	\label{fig:exam4_trace}
\end{figure}
\begin{figure}[ht]
	\centering
	\subfloat[][]{\includegraphics[clip,width=0.45\hsize]{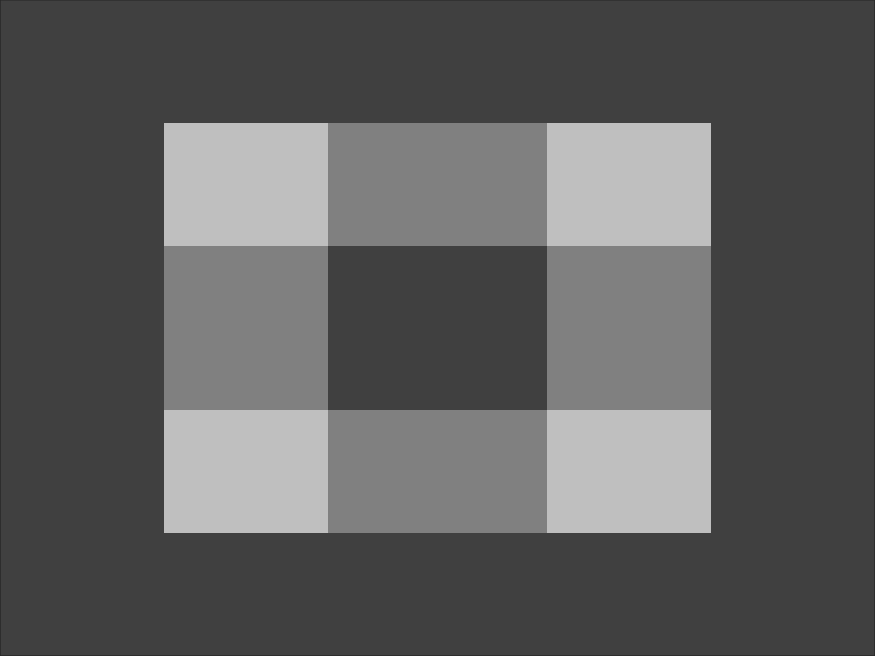}\label{subfig:exam4_x}} \quad
	\subfloat[][]{\includegraphics[clip,width=0.45\hsize]{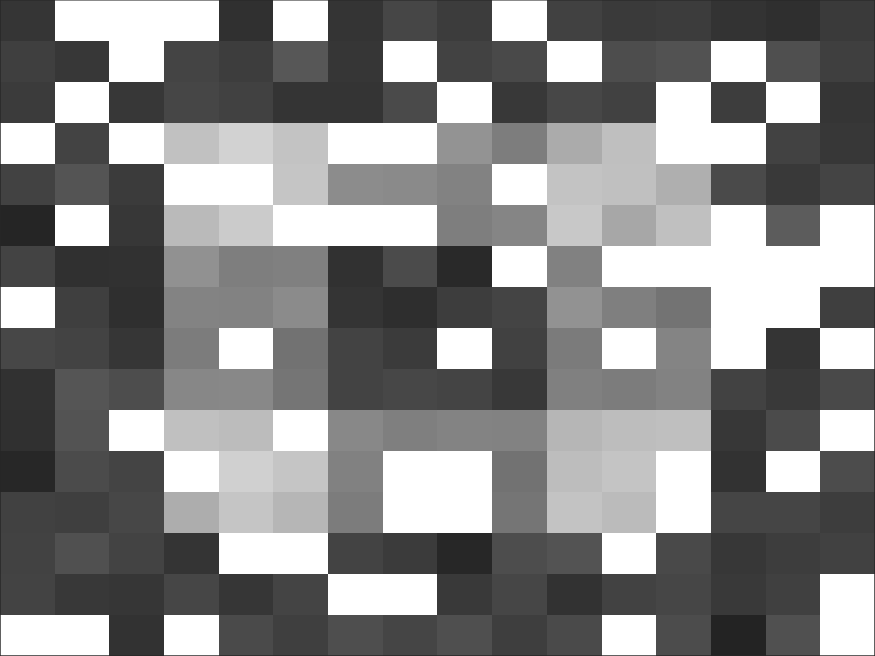}\label{subfig:exam4_y}} \quad
	\subfloat[][]{\includegraphics[clip,width=0.45\hsize]{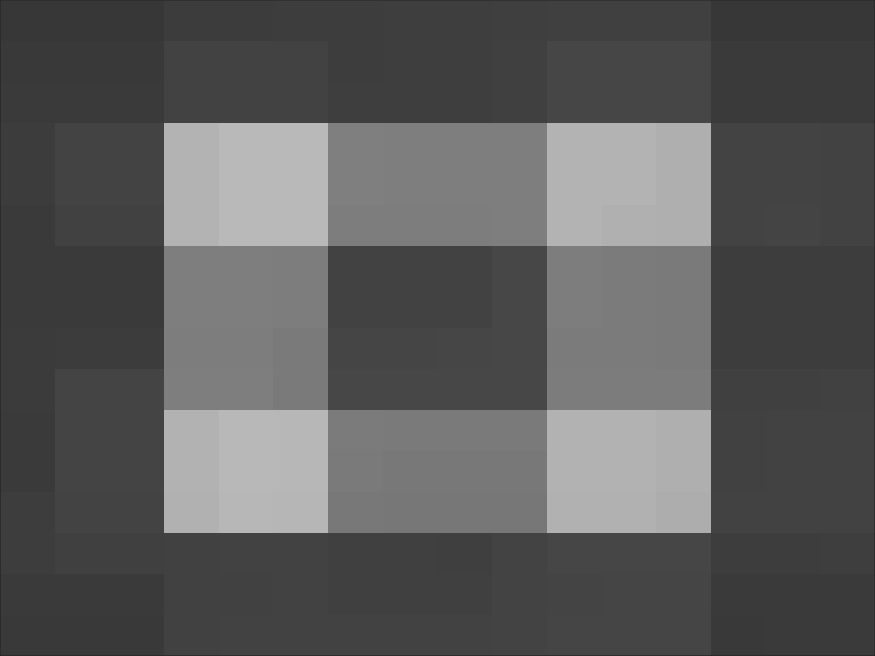}\label{subfig:exam4_estimateONE}} \quad
	\subfloat[][]{\includegraphics[clip,width=0.45\hsize]{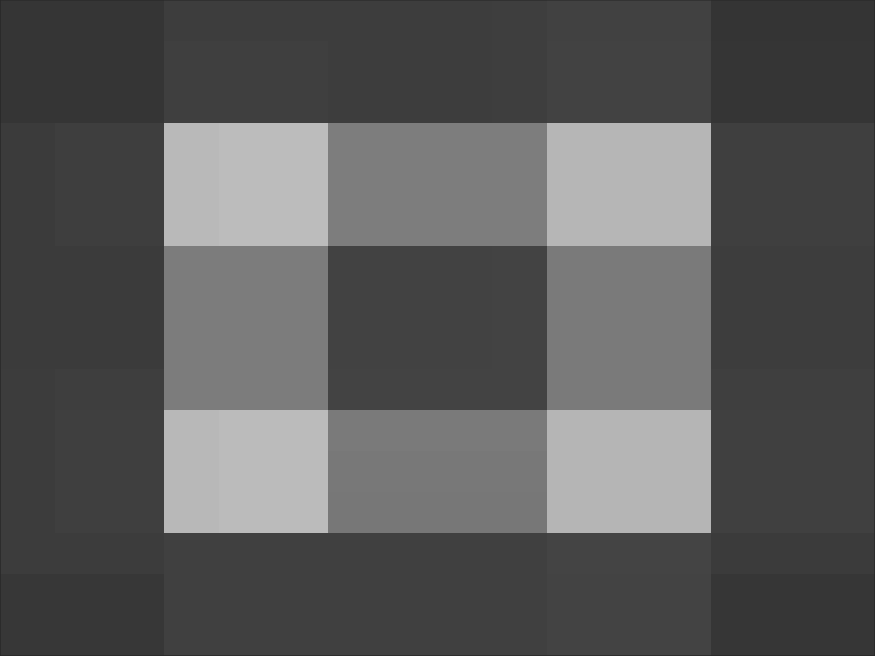}\label{subfig:exam4_estimateDO}} \quad
	\subfloat[][]{\includegraphics[clip,width=0.45\hsize]{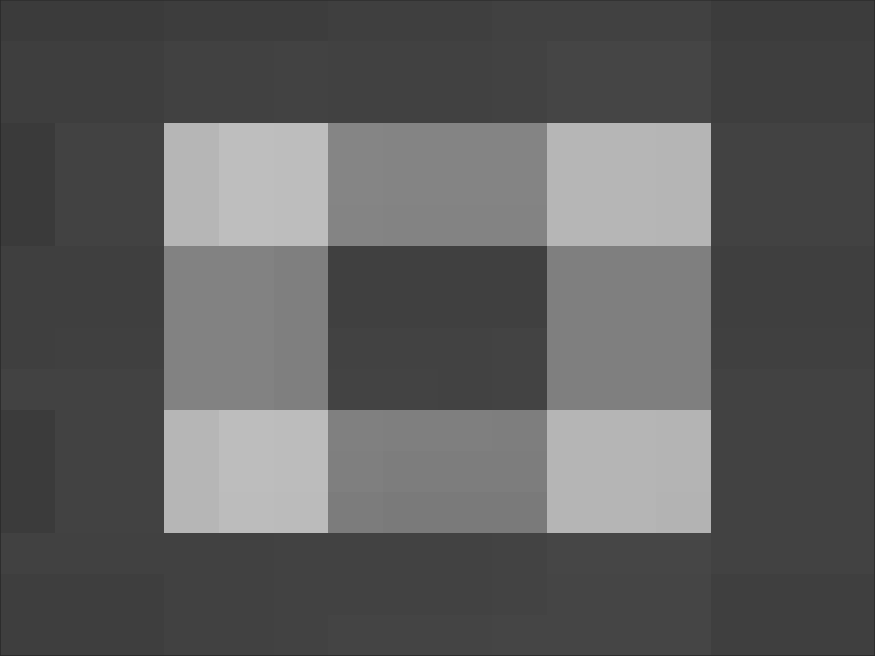}\label{subfig:exam4_estimateOL}} \quad
	\subfloat[][]{\includegraphics[clip,width=0.45\hsize]{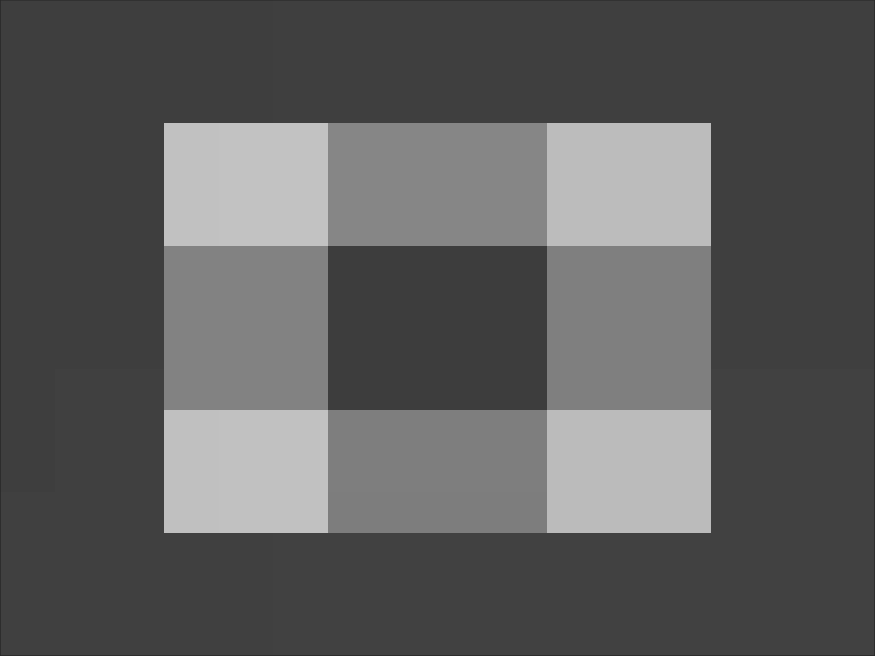}\label{subfig:exam4_estimateLiGME}}
	\caption{{(a) original low-rank and piecewise-constant matrix which is the same as Figure~\ref{fig:exam3_estimates}(a), (b) observed matrix whose missing entries are displayed in white, (c) estimated matrix by using $\Psi_{\rm \numI} \circ \frakL$ at $k = 1,000$ iteration, (d) estimated matrix by using $\Psi_{\rm \numII} \circ \frakL$ at $k = 1,000$ iteration, (e) estimated matrix by using $\Psi_{\rm \numIII} \circ \frakL$ at $k = 1,000$ iteration, (f) estimated matrix by using $\Psi_{\rm \numIV} \circ \frakL$ at $k = 1,000$ iteration. Each entry is displayed with under -0.2 in black and over 1.2 in white.}}
	\label{fig:exam4_estimates}
\end{figure}
\begin{figure}[ht]
	\centering
	\subfloat[][]{\includegraphics[clip,width=0.45\hsize]{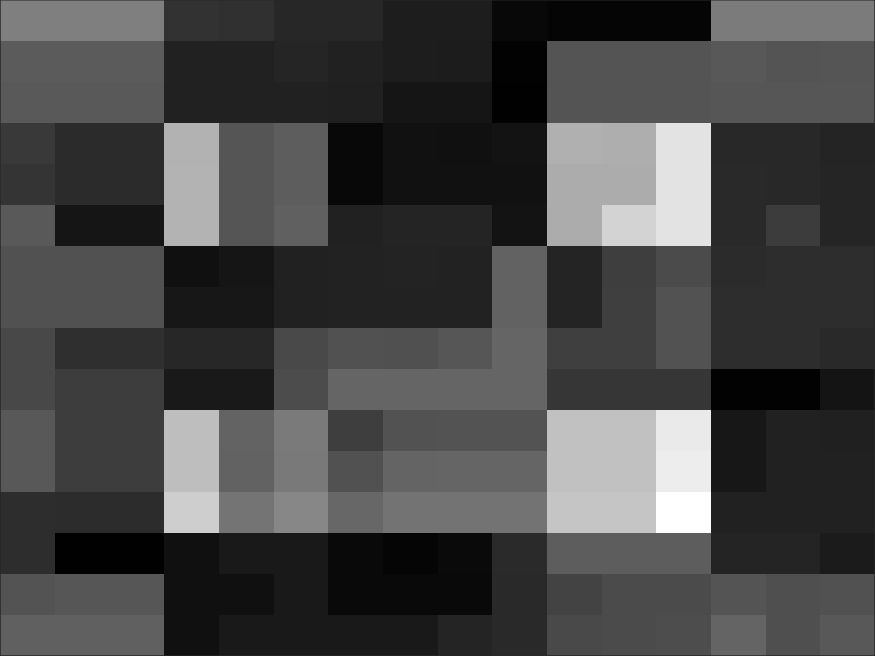}\label{subfig:exam4_diffONE}} \quad
	\subfloat[][]{\includegraphics[clip,width=0.45\hsize]{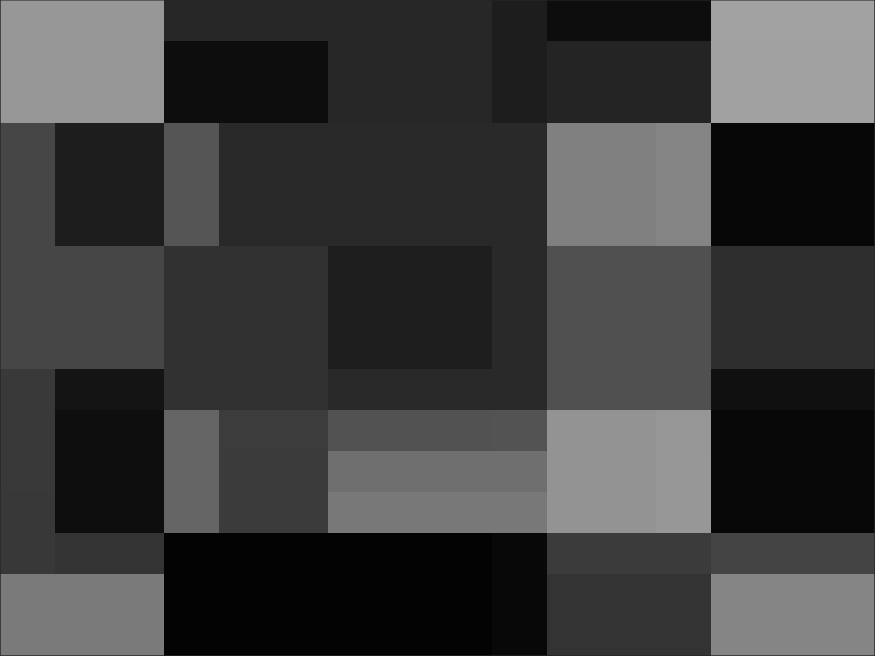}\label{subfig:exam4_diffDO}} \quad
	\subfloat[][]{\includegraphics[clip,width=0.45\hsize]{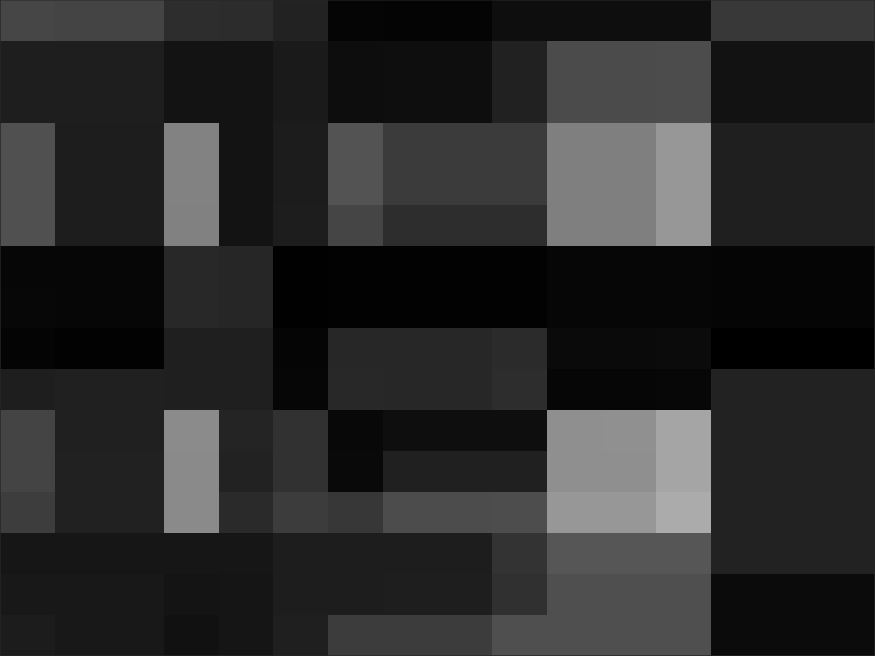}\label{subfig:exam4_diffOL}} \quad
	\subfloat[][]{\includegraphics[clip,width=0.45\hsize]{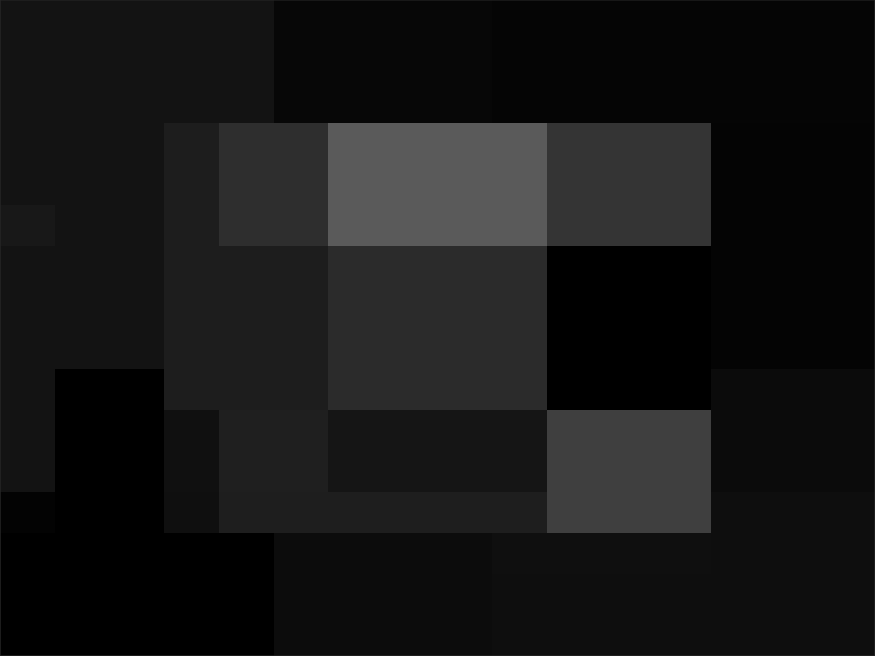}\label{subfig:exam4_diffGMC}}
	\caption[]{{The differences between original matrix and (a) the matrix in Figure~\ref{fig:exam4_estimates}(c), (b) the matrix in Figure~\ref{fig:exam4_estimates}(d), (c) the matrix in Figure~\ref{fig:exam4_estimates}(e), and (d) the matrix in Figure~\ref{fig:exam4_estimates}(f). Each entry has absolute value of the difference and is displayed with 0 in black and 0.07 in white.}}
	\label{fig:exam4_diffs}
\end{figure}

Figure~\ref{fig:exam4_tunes} shows dependency of recovering performance on the parameter $(\mu_a, \mu_b)$ in Problem~\ref{prob:optim} and Example~3.
The performance is measured by mean squared error (MSE) defined as the average of SE in \eqref{eq:SE} over $100$ independent realizations of the additive noise.
From Figure~\ref{fig:exam4_tunes}, we can see that (i) the best weights of the penalties are respectively $(\mu_{a,{\rm \numI}}, \mu_{b, {\rm \numI}}) := (0.015, 0.1)$ for $\Psi_{\rm \numI} \circ \frakL$, $(\mu_{a, {\rm \numII}}, \mu_{b, {\rm \numII}}) := (0.03, 0.15)$ for $\Psi_{\rm \numII} \circ \frakL$, $(\mu_{a, {\rm \numIII}}, \mu_{b, {\rm \numIII}}) := (0.015, 0.15)$ for $\Psi_{\rm \numIII} \circ \frakL$, and $(\mu_{a, {\rm \numIV}}, \mu_{b, {\rm \numIV}}) := (0.035, 0.1)$ for $\Psi_{\rm \numIV} \circ \frakL$ and (ii) the estimations by $\Psi_{i} \circ \frakL$ with $(\mu_{a,i}, \mu_{b,i})$ $(i = {\rm \numII}, {\rm \numIII}, {\rm \numIV})$ outperform the convex penalty $\Psi_{\rm \numI} \circ \frakL$ with $(\mu_{a, {\rm \numI}}, \mu_{b, {\rm \numI}})$ in the context of MSE.

Figure~\ref{fig:exam4_trace} shows dependency of the SE on the number of iterations under weights $(\mu_{a,i}, \mu_{b,i})$ for $\Psi_{i} \circ \frakL$ $(i = {\rm \numI}, {\rm \numII}, {\rm \numIII}, {\rm \numIV})$.
The accuracy of the approximations by $\Psi_{i} \circ \frakL$ $(i = {\rm \numII}, {\rm \numIII}, {\rm \numIV})$ penalties become higher than the convex penalty $\Psi_{\rm \numI} \circ \frakL$ after 110 iterations and SE for $\Psi_{\rm \numII} \circ \frakL$, $\Psi_{\rm \numIII} \circ \frakL$, and $\Psi_{\rm \numIV} \circ \frakL$ reaches respectively 78.1\%, 43.1\%, and 12.7\% of SE for $\Psi_{\rm \numI} \circ \frakL$ in the end.

Figure~\ref{fig:exam4_estimates} shows the original matrix, an observed matrix, and recovered matrices by the penalties at $1,000$ iteration and Figure~\ref{fig:exam4_diffs} shows the difference between the original matrix and recovered matrices.
From Figure~\ref{fig:exam4_diffs}, the recovered matrix by $\Psi_{\rm \numIV} \circ \frakL$ approximates most accurately the original matrix.

\renewcommand{\sfT}{{*}}
\section{Conclusion}
In this paper, we have proposed
the Linearly involved Generalized Moreau Enhanced (LiGME) model as a unified extension of the ideas in [Zhang'10, Selesnick'17, Yin, Parekh, Selesnick'19] for
exploiting nonconvex penalties in the regularized least-squares models without losing their overall convexities. The proposed model can admit multiple nonconvex penalties without losing its overall convexity and thus is applicable to much broader scenarios including sparsity-rank-aware signal processing and machine learning.
We have also proposed a proximal splitting type algorithm for the LiGME model under an overall-convexity condition. The proposed algorithm is guaranteed to converge to a globally optimal solution. Numerical experiments in four different sparsity-rank-aware signal processing scenarios demonstrate the effectiveness of the LiGME models and the proposed proximal splitting algorithm.

\clearpage

\appendix
\def\thesection{Appendix \Alph{section}}

\section{Selesnick's algorithm for minimizing $J_{(\|\cdot\|_1)_B \circ \rmId}$}
\label{app:FB}
For problem~\eqref{intro:GMCL} with $\mathcal{X}=\mathcal{Z}=\mathbb{R}^n$, $\mathcal{Y}=\mathbb{R}^m$, and a special $B$, Selesnick presented an algorithm shown in Fact~\ref{fact:selesnick-forward-backward}.

\begin{fact}[{\cite[Proposition~15]{Selesnick2017-sparse}}]
  \label{fact:selesnick-forward-backward}
	Let $(A,B,\bmy,\mu,\theta) \in \bbR^{m \times n} \times \bbR^{q \times n} \times \bbR^{m} \times \bbR_{++} \times [0,1]$.
	Suppose that $B^{\sfT}B = (\theta / \mu) A^{\sfT}A$.
	Define $T_{\mathrm{Sel}} \colon \bbR^n \times \bbR^n \to \bbR^n \times \bbR^n \colon (\bmx, \bmv) \mapsto (\bmxi, \bmzeta)$ by
	\begin{align*}
		\bmxi &:= \Prox_{\tau \mu \| \cdot \|_1} \left[ \bmx - \tau A^{\sfT}(A(\bmx + \theta (\bmv - \bmx)) - \bmy) \right], \\
		\bmzeta &:= \Prox_{\tau \mu \| \cdot \|_1} \left[ \bmv - \tau \theta A^{\sfT}A(\bmv - \bmx) \right],
	\end{align*}
	where
	\begin{equation*}
		\tau \in \left( 0, \frac{2}{\max\left\{ 1, \frac{\theta}{1-\theta} \right\} \sqrt{\rho(A^{\sfT}A)}} \right).
	\end{equation*}
	Then, for any initial point $(\bmx_0, \bmv_0) \in \bbR^n \times \bbR^n$, the sequence $(\bmx_k)_{k \in \bbN} \subset \bbR^n$ generated by
	\begin{equation*}
		(\bmx_{k+1}, \bmv_{k+1}) = T_{\mathrm{Sel}} (\bmx_k, \bmv_k)
	\end{equation*}
	converges to a point in $\argmin\nolimits_{\bmx \in \bbR^n} J_{(\|\cdot\|_1)_B \circ \rmId}(\bmx)$.
\end{fact}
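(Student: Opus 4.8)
The plan is to recast the minimization of $J_{(\|\cdot\|_1)_B\circ\rmId}$ as a convex--concave saddle problem and then recognize $T_{\mathrm{Sel}}$ as a forward--backward operator for the associated monotone inclusion. First I would use the definition of the GME penalty together with the hypothesis $B^{\sfT}B=(\theta/\mu)A^{\sfT}A$, which converts $\tfrac12\|B(\bmx-\bmv)\|^2$ into $\tfrac{\theta}{2\mu}\|A(\bmx-\bmv)\|^2$, to write
\begin{equation*}
\min_{\bmx}J_{(\|\cdot\|_1)_B\circ\rmId}(\bmx)=\min_{\bmx}\max_{\bmv}F(\bmx,\bmv),\qquad F(\bmx,\bmv):=\tfrac12\|\bmy-A\bmx\|^2+\mu\|\bmx\|_1-\mu\|\bmv\|_1-\tfrac{\theta}{2}\|A(\bmx-\bmv)\|^2.
\end{equation*}
Under the overall convexity ensured by $\mu B^{\sfT}B=\theta A^{\sfT}A\preceq A^{\sfT}A$ (Proposition~\ref{prop:exJ_cond}), $F$ is convex in $\bmx$ and concave in $\bmv$, and the inner maximization reproduces $-\mu\min_{\bmv}[\|\bmv\|_1+\tfrac12\|B(\bmx-\bmv)\|^2]$, so $\max_{\bmv}F(\bmx,\cdot)=J_{(\|\cdot\|_1)_B\circ\rmId}(\bmx)$; hence any saddle point $(\bmx^\star,\bmv^\star)$ has $\bmx^\star\in\argmin_{\bmx}J_{(\|\cdot\|_1)_B\circ\rmId}$.

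Next I would write the saddle (first-order) conditions as the monotone inclusion $0\in\mathbb{B}\bmu+\partial H(\bmu)$ in $\bbR^{n}\times\bbR^{n}$, where $\bmu=(\bmx,\bmv)$, the function $H(\bmx,\bmv):=\mu\|\bmx\|_1+\mu\|\bmv\|_1$ is separable, and $\mathbb{B}$ is the affine operator built from $(\nabla_{\bmx},-\nabla_{\bmv})$ of the smooth part of $F$, with linear part
\begin{equation*}
\begin{bmatrix}(1-\theta)A^{\sfT}A & \theta A^{\sfT}A\\ -\theta A^{\sfT}A & \theta A^{\sfT}A\end{bmatrix}.
\end{equation*}
A direct expansion shows $T_{\mathrm{Sel}}=\Prox_{\tau H}\circ(\rmId-\tau\mathbb{B})$: the first block of $(\rmId-\tau\mathbb{B})\bmu$ is exactly $\bmx-\tau A^{\sfT}(A(\bmx+\theta(\bmv-\bmx))-\bmy)$ and the second is $\bmv-\tau\theta A^{\sfT}A(\bmv-\bmx)$, while separability of $H$ makes $\Prox_{\tau H}$ act blockwise as $\Prox_{\tau\mu\|\cdot\|_1}$, matching $\bmxi$ and $\bmzeta$. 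Since $\partial H$ is maximally monotone, its resolvent $\Prox_{\tau H}$ is firmly (i.e. $\tfrac12$-averaged) nonexpansive.

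The concluding step is convergence. Once the forward step $\rmId-\tau\mathbb{B}$ is shown to be averaged nonexpansive (in a suitable inner product), Fact~\ref{fact:averaged} gives averagedness of the composition $T_{\mathrm{Sel}}$, and Fact~\ref{fact:KM} then yields $\bmu_k\to\bmu^\star\in\Fix(T_{\mathrm{Sel}})$; the fixed-point characterization identifies $\Fix(T_{\mathrm{Sel}})$ with the saddle set, so $\bmx_k\to\bmx^\star\in\argmin_{\bmx}J_{(\|\cdot\|_1)_B\circ\rmId}$.

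The hard part will be the averagedness of $\rmId-\tau\mathbb{B}$. The symmetric part of the linear part of $\mathbb{B}$ equals $\diag((1-\theta)A^{\sfT}A,\theta A^{\sfT}A)\succeq\rmO$, so $\mathbb{B}$ is monotone; but its skew-symmetric part, the off-diagonal $\pm\theta A^{\sfT}A$, obstructs cocoercivity as soon as $A^{\sfT}A$ is singular, so the Baillon--Haddad route to nonexpansiveness of $\rmId-\tau\mathbb{B}$ is unavailable in the Euclidean metric. I would therefore verify the averagedness inequality directly, as a comparison of quadratic forms in $(A\bmx,A\bmv)$, exploiting that both parts of $\mathbb{B}$ factor through $A^{\sfT}A$ (so $\mathbb{B}$ vanishes on $\ker A\times\ker A$, where the forward step is the identity and thus harmless). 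The appearance of $\sqrt{\rho(A^{\sfT}A)}=\|A\|_{\rm op}$ rather than $\rho(A^{\sfT}A)$ in the admissible range of $\tau$ signals that the sharp estimate is obtained not in the Euclidean norm but in a preconditioned inner product of the kind used for $\mathfrak{P}$ in Theorem~\ref{def:Tprop}; adopting such a metric (a primal--dual reading of the iteration) is the route by which I would recover the exact ceiling, with the factor $\max\{1,\theta/(1-\theta)\}$ arising from balancing the $(1-\theta)$- and $\theta$-weighted blocks.
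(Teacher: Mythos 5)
Two preliminary points of reference. First, the paper contains no proof of this statement: it is quoted in Appendix~A as a Fact imported from \cite[Proposition~15]{Selesnick2017-sparse}, so your proposal can only be compared with Selesnick's original argument and with the analogous machinery the paper builds for its own Theorem~\ref{def:Tprop}. Measured against those, your architecture is the right one and essentially the original one: the saddle-point rewriting of $J_{(\|\cdot\|_1)_B\circ\rmId}$ enabled by $B^{\top}B=(\theta/\mu)A^{\top}A$, the monotone inclusion $0\in\mathbb{B}u+\partial H(u)$ with separable $H$, the identification $T_{\mathrm{Sel}}=\Prox_{\tau H}\circ(\rmId-\tau\mathbb{B})$ (your block computation is correct), and convergence via averagedness of the composition (Fact~\ref{fact:averaged}) plus Krasnosel'ski{\u{\i}}--Mann (Fact~\ref{fact:KM}), with $\Fix(T_{\mathrm{Sel}})$ equal to the saddle set. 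The only step you gloss that deserves a word is nonemptiness of the saddle set (needed before invoking Fact~\ref{fact:KM}); Selesnick establishes this separately.

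Where your diagnosis of the hard step goes wrong is the claim that the skew part of the linear operator $M$ ``obstructs cocoercivity as soon as $A^{\top}A$ is singular,'' forcing a preconditioned metric of $\mathfrak{P}$-type. In fact $M$ is cocoercive in the plain Euclidean metric, for exactly the structural reason you yourself note: both parts factor through $A$. Indeed the cross terms cancel, giving $\langle u, Mu\rangle=(1-\theta)\|Ax\|^2+\theta\|Av\|^2$, while $\|Mu\|^2=\|A^{\top}[(1-\theta)Ax+\theta Av]\|^2+\|A^{\top}[\theta A(v-x)]\|^2\leq \rho(A^{\top}A)\left(\|(1-\theta)a+\theta b\|^2+\theta^2\|a-b\|^2\right)$ with $a=Ax$, $b=Av$; the elementary sharp estimate $\|(1-\theta)a+\theta b\|^2+\theta^2\|a-b\|^2\leq \max\{1,\tfrac{\theta}{1-\theta}\}\left[(1-\theta)\|a\|^2+\theta\|b\|^2\right]$ then yields $\beta$-cocoercivity with $\beta^{-1}=\max\{1,\theta/(1-\theta)\}\,\rho(A^{\top}A)$, singular $A^{\top}A$ notwithstanding (Baillon--Haddad is indeed unavailable since $\mathbb{B}$ is not a gradient, but it is also unnecessary). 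This is precisely the direct quadratic-form verification you propose as a fallback, so your plan closes --- but in the Euclidean metric, not a preconditioned one. Relatedly, your attempt to read meaning into the $\sqrt{\rho(A^{\top}A)}$ in the stated step-size ceiling chases a phantom: under the scaling $A\mapsto cA$ (with $B$ rescaled per the hypothesis) the admissible $\tau$ must scale as $c^{-2}$, so the ceiling must involve $\rho(A^{\top}A)^{-1}$, matching the cocoercivity bound $\tau\in(0,2\beta)$; the square root in the Fact as transcribed appears to be a slip relative to Selesnick's constant, and no change of inner product will legitimize it.
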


\section{Proof of Proposition~\ref{prop:exJ_cond}}
\label{app:prop1}
\noindent {\bf\underline{Proof of (a)}}: Fermat's rule \eqref{eq:Fermat} and the property \eqref{eq:subdifferentialinversion} of conjugate functions yield
$ B^{\sfT}B\mathfrak{L}x \in \argmin_{\hat{v}\in \mathcal{Z}}\Psi^*(\hat{v})
\IFF \partial \Psi^*(B^{\sfT}B\mathfrak{L}x) \ni 0_{\mathcal{Z}} \IFF B^{\sfT}B\mathfrak{L}x \in \partial \Psi(0_{\mathcal{Z}})
\IFF 0_{\mathcal{Z}} \in \partial \Psi(0_{\mathcal{Z}}) - B^{\sfT}B\mathfrak{L}x$. Since the sum rule \eqref{eq:sumrule} for $\partial \left(\Psi(\cdot)+\frac{1}{2}\|B(\cdot-\mathfrak{L}x)\|_{\widetilde{\mathcal{Z}}}^2\right)(0_{\mathcal{Z}})$ with \\ $\operatorname{dom}\left(\frac{1}{2}\|B(\cdot-\mathfrak{L}x)\|_{\widetilde{\mathcal{Z}}}^2\right) =\mathcal{Z}$ also yields
$ \partial \Psi(0_{\mathcal{Z}})+B^{\sfT}B(0_{\mathcal{Z}}-\mathfrak{L}x)=\partial \left(\Psi(\cdot)+\frac{1}{2}\|B(\cdot-\mathfrak{L}x)\|_{\widetilde{\mathcal{Z}}}^2\right)(0_{\mathcal{Z}})
$, we have $ B^{\sfT}B\mathfrak{L}x \in \argmin_{\hat{v}\in \mathcal{Z}}\Psi^*(\hat{v})
\IFF 0_{\mathcal{Z}} \in \partial \left(\Psi(\cdot)+\frac{1}{2}\|B(\cdot-\mathfrak{L}x)\|_{\widetilde{\mathcal{Z}}}^2\right)(0_{\mathcal{Z}}) \IFF 0_{\mathcal{Z}} \in \argmin_{\hat{v} \in \mathcal{Z}}\left(\Psi(\hat{v}) + \frac{1}{2}\|B(\hat{v} - \mathfrak{L}x)\|_{\widetilde{\mathcal{Z}}}^2 \right) \IFF \Psi_B\circ \mathfrak{L}(x) = \Psi(\mathfrak{L}x) - \left[ \Psi(0_{\mathcal{Z}}) +\frac{1}{2}\|B\mathfrak{L}x\|_{\widetilde{\mathcal{Z}}}^2\right]$, where the 2nd last equivalence is due to \eqref{eq:Fermat} and the last equivalence is by definition of $\Psi_B$.


\noindent {\bf\underline{Proof of (b)}}: We shall show $(C_1) \FI (C_2)$.
Fix $y \in \mathcal{Y}$ arbitrarily. Then we have, for every $x \in \mathcal{X}$,
\newcommand{\gv}{\psi_{v}}
\begin{align}
J_{\Psi_B \circ \mathfrak{L}}(x)&=\frac{1}{2} \| y - Ax \|_{\mathcal{Y}}^2 + \mu \Psi_B \circ \mathfrak{L} x  \label{eq:B1} \\
&=\frac{1}{2} \|y - Ax\|_{\mathcal{Y}}^2 + \mu \Psi(\mathfrak{L}x) - \mu \min_{v \in \mathcal{Z}} \left[ \Psi(v) + \frac{1}{2} \|B(\mathfrak{L}x - v)\|_{\widetilde{\mathcal{Z}}}^2 \right]  \nonumber \\
&= \left( \frac{1}{2} \|y\|_{\mathcal{Y}}^2 - \langle y, Ax \rangle_{\mathcal{Y}} + \frac{1}{2} \|Ax\|_{\mathcal{Y}}^2 \right) + \mu  \Psi(\mathfrak{L}x) \nonumber \\
&\qquad - \mu \min_{v \in \mathcal{Z}} \left[  \Psi(v) + \left( \frac{1}{2} \|Bv\|_{\widetilde{\mathcal{Z}}}^2 - \langle B v, B\mathfrak{L} x\rangle_{\widetilde{\mathcal{Z}}} +  \frac{1}{2} \|B\mathfrak{L}x\|_{\widetilde{\mathcal{Z}}}^2 \right) \right]  \nonumber \\
&= \frac{1}{2} \left( \|Ax\|_{\mathcal{Y}}^2 - \mu \|B\mathfrak{L}x\|_{\widetilde{\mathcal{Z}}}^2 \right) + \frac{1}{2} \|y\|_{\mathcal{Y}}^2 -  \langle y, Ax \rangle_{\mathcal{Y}} +\mu \Psi(\mathfrak{L}x) +  \mu\max_{v \in \mathcal{Z}} \gv(x)  \nonumber \\
                        &= \frac{1}{2} \langle x, (A^{\sfT}A - \mu \mathfrak{L}^{\sfT}B^{\sfT}B\mathfrak{L})x \rangle_{\mathcal{X}} + \frac{1}{2} \|y\|_{\mathcal{Y}}^2 - \langle y, Ax \rangle_{\mathcal{Y}} +\mu \Psi(\mathfrak{L}x)  +  \mu\max_{v \in \mathcal{Z}} \gv(x), \nonumber \\
  \label{eq:teagnoqeg} 
\end{align}
where
\begin{align}
  \label{eq:gvewhywe}  
  \gv \colon \mathcal{X} \to \bbR\colon x \mapsto -\left( \Psi(v) + \frac{1}{2} \|Bv\|_{\widetilde{\mathcal{Z}}}^2 -  \langle B v, B\mathfrak{L}x\rangle_{\widetilde{\mathcal{Z}}} \right).
\end{align}
Since $\gv$ is affine for every $v \in \mathcal{Z}$  and $\max_{v \in \mathcal{Z}} \gv(0_{\mathcal{X}}) \in \mathbb{R}$ due to $\dom \Psi =\mathcal{Z}$ and coercivity of $\Psi$, \cite[Proposition~9.3]{Bauschke2011-convex} yields $\max_{v \in \mathcal{Z}} \gv \in \Gamma_0(\mathcal{X})$. Moreover, 
the assumption $A^{\sfT}A - \mu \mathfrak{L}^{\sfT}B^{\sfT}B\mathfrak{L} \succeq O_{\mathcal{X}}$ ensures that the function $\mathcal{X} \ni x  \mapsto \frac{1}{2} \langle x, (A^{\sfT}A - \mu \mathfrak{L}^{\sfT}B^{\sfT}B\mathfrak{L})x \rangle_{\mathcal{X}}+\frac{1}{2} \|y\|_{\mathcal{Y}}^2 - \langle y, Ax \rangle_{\mathcal{Y}} +\mu \Psi(\mathfrak{L}x)$ also belongs to $\Gamma_0(\mathcal{X})$. Thus $J_{\Psi_B \circ \mathfrak{L}} \in \Gamma_0(\mathcal{X})$ holds. 

Finally, since the affine function $\mathcal{X} \ni x  \mapsto \frac{1}{2} \|y\|_{\mathcal{Y}}^2 - \langle y, Ax \rangle_{\mathcal{Y}}$ in \eqref{eq:teagnoqeg} does not affect the convexity of $J_{\Psi_B \circ \mathfrak{L}}$,
we have
\begin{align*}
  (C_2) & \IFF   \frac{1}{2} \|A\cdot\|_{\cal{Y}}^2
          -\mu \|B\mathfrak{L}\cdot\|_{\mathcal{X}}^2 +\mu \Psi(\mathfrak{L} \cdot)  +  \mu\max_{v \in \mathcal{Z}} \gv(\cdot) \in \Gamma_0(\mathcal{X})
   \IFF  (C_3),
\end{align*}
    where the first equivalence holds by the expressions \eqref{eq:teagnoqeg} and \eqref{eq:B1}.

\noindent  {\bf\underline{Proof of (a')}}:
\begin{align}
  \label{eq:goala'}
  \opnorm{B^{\sfT}B\mathfrak{L}x}_* \leq 1 \Leftrightarrow  B^{\sfT}B\mathfrak{L}x \in \argmin_{\hat{v}\in \mathcal{Z}} \opnorm{\cdot}^*(\hat{v})
\end{align}
is verified by
$
   \opnorm{z}^* =
   \begin{cases}
     0 & \text{ if } \opnorm{z}_* \leq 1  \\
     +\infty & \text{ otherwise,}
\end{cases}
$
(see e.g. \cite[Ex. 3.26]{Boyd-Vandenberghe04}).

\noindent {\bf\underline{Proof of (b')}}: We shall show (C$_3$) $\FI$ (C$_1$) by contraposition.
Suppose $A^{\sfT}A - \mu \mathfrak{L}^{\sfT}B^{\sfT}B\mathfrak{L} \not \succeq \rmO_{\mathcal{X}}$, i.e., there exists $\hat{x} \in \mathcal{X} \setminus \{0_{\mathcal{X}}\}$ such that
\begin{equation}
\langle \hat{x}, (A^{\sfT}A - \mu \mathfrak{L}^{\sfT}B^{\sfT}B\mathfrak{L}) \hat{x} \rangle_{\mathcal{X}} < 0, \label{eq:exist_negative}
\end{equation}
and we shall prove $J_{\opnorm{\cdot}_B \circ \mathfrak{L}}^{(0)} \not \in \Gamma_0(\mathcal{X})$.
By $\mu \in \bbR_{++}$, we have
\begin{align}
  \text{\eqref{eq:exist_negative}}
                                     \IFF \ \|B\mathfrak{L}\hat{x}\|_{\widetilde{\mathcal{Z}}}^2 > \frac{1}{\mu} \|A\hat{x}\|_{\mathcal{Y}}^2 \ \geq 0 \nonumber 
\end{align}
implying thus 
$B^{\sfT}B\mathfrak{L}\hat{x} \neq 0_{\mathcal{Z}}$
and $\opnorm{B^{\sfT}B\mathfrak{L}\bar{x}}_*=1$ for $\bar{x} := (\opnorm{B^{\sfT}B\mathfrak{L}\hat{x}}_*)^{-1}\hat{x} \in \mathcal{X}$.
The statement (a') 
yields  
\begin{align}
(\forall \lambda \in (0,1)) \  J_{\opnorm{\cdot}_B \circ \mathfrak{L}}^{(0)}(\lambda \bar{x}) &=\frac{1}{2} \|A(\lambda \bar{x})  \|_{\mathcal{Y}}^2 + \mu \opnorm{\mathfrak{L}(\lambda \bar{x})} - \frac{\mu}{2} \|B\mathfrak{L}(\lambda\bar{x}) \|_{\widetilde{\mathcal{Z}}}^2  \nonumber \\
\relax &= \frac{1}{2} \left( \|A\bar{x}\|_{\mathcal{Y}}^2 - \mu \|B\mathfrak{L}\bar{x}\|_{\widetilde{\mathcal{Z}}}^2 \right) \lambda^2 +  \mu \opnorm{\mathfrak{L}\bar{x}}  \lambda
\nonumber 
\end{align}
and $J_{\opnorm{\cdot}_B \circ \mathfrak{L}}^{(0)}(0_{\calX})=  \frac{1}{2} \left( \|A0_{\calX}\|_{\mathcal{Y}}^2 - \mu \|B\mathfrak{L}0_{\calX}\|_{\widetilde{\mathcal{Z}}}^2 \right)  +  \mu \opnorm{\mathfrak{L}0_{\calX}} =0$,
from 
which we have
\begin{align}
 \frac{1}{2} J_{\opnorm{\cdot}_B \circ \mathfrak{L}}^{(0)}(0_{\mathcal{X}}) + \frac{1}{2} J_{\opnorm{\cdot}_B \circ \mathfrak{L}}^{(0)}(\bar{x}) - J_{\opnorm{\cdot}_B \circ \mathfrak{L}}^{(0)} \left( \frac{0_{\mathcal{X}} +  \bar{x}}{2} \right) 
= \frac{1}{8} \left( \|A\bar{x}\|_{\mathcal{Y}}^2 - \mu \|B\mathfrak{L}\bar{x}\|_{\widetilde{\mathcal{Z}}}^2 \right) <0. \nonumber 
\end{align}
\hfill \qed

\section{Proof of Lemma~\ref{lem:qualification}}
\label{app:qualification}
We will show
\begin{align}
  \nonumber 
  {\rm span}\left(\dom \left(\left(\Psi+ \frac{1}{2} \|B \cdot \|_{\widetilde{\mathcal{Z}}}^2\right)^*\right) - \ran(B^*)\right) \subset {\rm cone}\left(\dom \left(\left(\Psi+ \frac{1}{2} \|B \cdot \|_{\widetilde{\mathcal{Z}}}^2\right)^*\right) - \ran(B^*)\right),
\end{align}
which is equivalent to \eqref{eq:qualification}.
\noindent By the even symmetry of $\Psi$, we have for $v \in \calZ$
\begin{align*}
  \left(\Psi+ \frac{1}{2} \|B \cdot \|_{\widetilde{\mathcal{Z}}}^2\right)^*(-v)
  = \sup_{w \in \mathcal{Z}}\left(
  \langle w, -v\rangle_{\mathcal{Z}} - \Psi(w) - \frac{1}{2} \|B w \|_{\widetilde{\mathcal{Z}}}^2 
  \right) 
  = \left(\Psi+ \frac{1}{2} \|B \cdot \|_{\widetilde{\mathcal{Z}}}^2\right)^*(v)
\end{align*}
and 
\begin{align}
 \label{eq:evendom}
  \mathcal{D}:=\dom \left(\left(\Psi+ \frac{1}{2} \|B \cdot \|_{\widetilde{\mathcal{Z}}}^2\right)^*\right)=-\dom \left(\left(\Psi+ \frac{1}{2} \|B \cdot \|_{\widetilde{\mathcal{Z}}}^2\right)^*\right).
\end{align}
Let $v  = \sum_{i \in I} \alpha_i (v_i-w_i) \in {\rm span}\left(\mathcal{D} - \ran(B^*)\right)$ for some $(\alpha_i,v_i,w_i)_{i \in I} \subset (\mathbb{R}\setminus\{0\}) \times \mathcal{D} \times \ran(B^*)$ with finite $I \subset \mathbb{N}$. Then we have 
\begin{align}
  v = \! \sum_{i \in I} |\alpha_i| ({\rm sgn}(\alpha_i) v_i) \! -\! \sum_{l \in I} \alpha_l w_l
  = \!  \sum_{\iota\in I}|\alpha_\iota| \left( \! \sum_{i \in I} \frac{|\alpha_i|}{\sum_{\iota\in I}|\alpha_\iota|} ({\rm sgn}(\alpha_i) v_i)\!  -\! \frac{\sum_{l \in I} \alpha_l w_l}{\sum_{\iota\in I}|\alpha_\iota|}\! \right)\! \!,
  \label{eq:ri}
\end{align}
where $\frac{\sum_{l \in I} \alpha_l w_l}{\sum_{\iota\in I}|\alpha_\iota|} \in \ran(B^*)$. Moreover, by ${\rm sgn}(\alpha_i) v_i \in \mathcal{D}$ $(i \in I)$ due to \eqref{eq:evendom} and by $\sum_{i \in I} \frac{|\alpha_i|}{\sum_{\iota\in I}|\alpha_\iota|} ({\rm sgn}(\alpha_i) v_i) \in \mathcal{D}$ due to the convexity of $\mathcal{D}$, \eqref{eq:ri} implies $v \in  {\rm cone}\left(\mathcal{D} - \ran(B^*)\right)$.

\section{Proof of Theorem~\ref{def:Tprop}} \label{app:theo1}

{\bf \underline{Proof of (a)}:}
Recall that, 
under the assumption in Problem~1,
\eqref{eq:teagnoqeg} gives an expression of
$J_{\Psi_B \circ \mathfrak{L}}$ as a sum of convex functions.
The proof of (a) is decomposed into two steps. \\
{\bf (Step 1)}  By applying properties of the subdifferential in Section~\ref{sec:elem_convex}, 
we will derive
an alternative characterization of
$\mathcal{S}=\{ \bmx^{\star} \in \mathcal{X} \mid 0_{\mathcal{X}} \in \partial J_{\Psi_B \circ \mathfrak{L}}(\bmx^{\star}) \}$
 in terms of zeros of the sum of an affine operator $F$ and a set-valued operator $G$ involving $\partial \Psi$ (see Claim~\ref{claim:translation}).
\begin{claim}
	\label{claim:translation}
        In Problem~1,
        for any $\bmx^{\star} \in \calX$, we have 
        $\bmx^{\star} \in \calS$ if and only if there exists $(\bmv^{\star}, \bmw^{\star}) \in \mathcal{Z} \times \mathcal{Z}$ s.t. 
	\begin{equation}
	(\bmzero_{\mathcal{X}}, \bmzero_{\mathcal{Z}}, \bmzero_{\mathcal{Z}}) \in F(\bmx^{\star}, \bmv^{\star}, \bmw^{\star}) + G(\bmx^{\star}, \bmv^{\star}, \bmw^{\star}), \label{eq:zero_of_FG}
	\end{equation}
	where $F \colon \calR \to \calR$ and $G \colon \calR \to 2^{\calR}$ are defined as
	\begin{align}
	&F(\bmx, \bmv, \bmw) := \left( (A^{\sfT}A - \mu \mathfrak{L}^{\sfT}B^{\sfT}B\mathfrak{L})\bmx - A^{\sfT}\bmy, \mu B^{\sfT}B \bmv, \bmzero_{\mathcal{Z}}  \right),  \nonumber \\
	&G(\bmx, \bmv, \bmw) := \!\{ \mu \mathfrak{L}^{\sfT}B^{\sfT}B \bmv \!+\! \mu \mathfrak{L}^{\sfT}\bmw \} \!\times\! (-\mu B^{\sfT}B\mathfrak{L}\bmx \!+\! \mu \partial \Psi(\bmv))\!\times\! (-\mu \mathfrak{L} \bmx \!+\! \mu \partial \Psi^*(\bmw) ). \nonumber 
	\end{align}
      \end{claim}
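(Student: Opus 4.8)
The plan is to derive the optimality condition $0_{\calX}\in\partial J_{\Psi_B\circ\mathfrak{L}}(\bmx^\star)$ from Fermat's rule \eqref{eq:Fermat}, and then to expand the subdifferential through the convex decomposition \eqref{eq:teagnoqeg} in a way that naturally exposes the two auxiliary variables $\bmv^\star,\bmw^\star$ appearing in \eqref{eq:zero_of_FG}. Since $A^{\sfT}A-\mu\mathfrak{L}^{\sfT}B^{\sfT}B\mathfrak{L}\succeq\rmO_{\calX}$ is assumed in Problem~\ref{prob:optim}, Proposition~\ref{prop:exJ_cond}(b) gives $J_{\Psi_B\circ\mathfrak{L}}\in\Gamma_0(\calX)$, so Fermat's rule is applicable and $\calS=\{\bmx^\star\in\calX\mid 0_{\calX}\in\partial J_{\Psi_B\circ\mathfrak{L}}(\bmx^\star)\}$. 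First I would rewrite the last term of \eqref{eq:teagnoqeg}: with $h:=\Psi+\frac{1}{2}\|B\cdot\|^2_{\widetilde{\calZ}}\in\Gamma_0(\calZ)$, the affine family \eqref{eq:gvewhywe} yields $\mu\max_{v\in\calZ}\psi_v(\bmx)=\mu\,h^{*}(B^{\sfT}B\mathfrak{L}\bmx)$ (this maximum is attained and finite by $\dom\Psi=\calZ$ and the coercivity of $\Psi$ already used below \eqref{eq:gvewhywe}), so that
\begin{align*}
J_{\Psi_B\circ\mathfrak{L}}(\bmx)=g(\bmx)+\mu\,\Psi(\mathfrak{L}\bmx)+\mu\,h^{*}\!\left(B^{\sfT}B\mathfrak{L}\bmx\right),
\end{align*}
where $g(\bmx):=\frac{1}{2}\langle\bmx,(A^{\sfT}A-\mu\mathfrak{L}^{\sfT}B^{\sfT}B\mathfrak{L})\bmx\rangle_{\calX}+\frac{1}{2}\|\bmy\|^2_{\calY}-\langle\bmy,A\bmx\rangle_{\calY}$ is convex and differentiable with $\nabla g(\bmx)=(A^{\sfT}A-\mu\mathfrak{L}^{\sfT}B^{\sfT}B\mathfrak{L})\bmx-A^{\sfT}\bmy$.

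Next I would compute $\partial J_{\Psi_B\circ\mathfrak{L}}$ by the sum rule \eqref{eq:sumrule} (legitimate because $g$ has full domain) together with the chain rule \eqref{eq:chainrule} applied to each nonsmooth term. For $\Psi\circ\mathfrak{L}$ the qualification $0_{\calZ}\in\ri(\dom\Psi-\ran\mathfrak{L})$ is automatic since $\dom\Psi=\calZ$, giving $\partial(\Psi\circ\mathfrak{L})=\mathfrak{L}^{\sfT}(\partial\Psi)\mathfrak{L}$. For the composite $h^{*}\circ N$ with $N:=B^{\sfT}B\mathfrak{L}$ and $N^{\sfT}=\mathfrak{L}^{\sfT}B^{\sfT}B$, the chain rule requires $0_{\calZ}\in\ri(\dom(h^{*})-\ran N)$, which is exactly the content of Lemma~\ref{lem:qualification} (stated there for $\ran(B^{\sfT})$, but the identical relative-interior argument applies to the subspace $\ran N$). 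Combining this with the inversion $\partial h^{*}=(\partial h)^{-1}$ from \eqref{eq:subdifferentialinversion} and the sum rule $\partial h=\partial\Psi+B^{\sfT}B$ (the quadratic being smooth) yields $\partial\big(\mu\,h^{*}(N\cdot)\big)(\bmx)=\mu\,\mathfrak{L}^{\sfT}B^{\sfT}B\,(\partial h)^{-1}(B^{\sfT}B\mathfrak{L}\bmx)$.

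Collecting the three pieces, $0_{\calX}\in\partial J_{\Psi_B\circ\mathfrak{L}}(\bmx^\star)$ becomes equivalent to the existence of $\bmw^\star\in\partial\Psi(\mathfrak{L}\bmx^\star)$ and $\bmv^\star\in(\partial h)^{-1}(B^{\sfT}B\mathfrak{L}\bmx^\star)$ satisfying $\nabla g(\bmx^\star)+\mu\mathfrak{L}^{\sfT}\bmw^\star+\mu\mathfrak{L}^{\sfT}B^{\sfT}B\bmv^\star=0_{\calX}$. I would then translate the two membership relations into the coordinates of $F+G$. Substituting $\nabla g$, the displayed equality is precisely the $\calX$-component $0_{\calX}\in(A^{\sfT}A-\mu\mathfrak{L}^{\sfT}B^{\sfT}B\mathfrak{L})\bmx^\star-A^{\sfT}\bmy+\mu\mathfrak{L}^{\sfT}B^{\sfT}B\bmv^\star+\mu\mathfrak{L}^{\sfT}\bmw^\star$. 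The relation $\bmv^\star\in(\partial h)^{-1}(B^{\sfT}B\mathfrak{L}\bmx^\star)$ unfolds, via $\partial h=\partial\Psi+B^{\sfT}B$, into $B^{\sfT}B(\mathfrak{L}\bmx^\star-\bmv^\star)\in\partial\Psi(\bmv^\star)$, i.e. $0_{\calZ}\in\mu B^{\sfT}B\bmv^\star-\mu B^{\sfT}B\mathfrak{L}\bmx^\star+\mu\partial\Psi(\bmv^\star)$, the first $\calZ$-component; and by the conjugacy inversion \eqref{eq:subdifferentialinversion}, $\bmw^\star\in\partial\Psi(\mathfrak{L}\bmx^\star)\IFF\mathfrak{L}\bmx^\star\in(\partial\Psi)^{*}(\bmw^\star)$, i.e. $0_{\calZ}\in-\mu\mathfrak{L}\bmx^\star+\mu(\partial\Psi)^{*}(\bmw^\star)$, the second $\calZ$-component. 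This is exactly \eqref{eq:zero_of_FG}, and because every calculus rule invoked is an equality of subdifferentials rather than a one-sided inclusion, both implications of the claim follow at once.

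I expect the main obstacle to be the rigorous justification of the chain rule for the conjugate composite $h^{*}(B^{\sfT}B\mathfrak{L}\,\cdot\,)$: the outer function $h^{*}$ need not have full domain (for instance $\Psi=\|\cdot\|_1$, $B=\rmO_{\calZ}$ gives $h^{*}=\iota_{[-1,1]^n}$), so the constraint qualification cannot be taken for granted and must be secured by the relative-interior computation supplied by Lemma~\ref{lem:qualification}, which is precisely where the even-symmetry hypothesis $\Psi\circ(-\rmId)=\Psi$ enters (it makes $\dom(h^{*})$ symmetric). A secondary, milder point is to confirm that the inner maximization defining $\max_{v}\psi_v$ is genuinely attained, so that $\bmv^\star$ exists as a point and not merely as an asymptotic selection; this follows from $\dom\Psi=\calZ$ and the coercivity of $\Psi$.
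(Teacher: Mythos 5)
Your proposal is correct and follows the paper's proof in all essentials: same starting decomposition \eqref{eq:teagnoqeg}, same identification of the max term with a conjugate composite, same use of Fermat's rule, the sum rule, the chain rule, and the inversion \eqref{eq:subdifferentialinversion} to unfold the two auxiliary memberships into the components of $F+G$, and the same observation that all calculus rules used are equalities, so both directions of the claim come for free. The one place where you deviate is the handling of the constraint qualification for the conjugate composite: you apply the chain rule \emph{once} to $h^{*}\circ(B^{\sfT}B\mathfrak{L})$, which forces you to extend Lemma~\ref{lem:qualification} from $\ran(B^{\sfT})$ to $\ran(B^{\sfT}B\mathfrak{L})$; your justification for this extension is sound, since the proof of Lemma~\ref{lem:qualification} uses only that the subtracted set is a linear subspace, together with the convexity and even symmetry of $\dom(h^{*})$, so it applies verbatim to any $\ran(N)$. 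The paper instead factors $B^{\sfT}B\mathfrak{L}=B^{\sfT}\circ(B\mathfrak{L})$ and applies the chain rule in two stages: it first shows $\dom\bigl(h^{*}\circ B^{\sfT}\bigr)=\widetilde{\mathcal{Z}}$ (equation \eqref{eq:domPB}, secured by the coercivity of $\Psi$), which makes the outer application qualification-free, and then invokes Lemma~\ref{lem:qualification} exactly as stated for the inner $B^{\sfT}$. The trade-off is minor: the paper's route needs no modification of the lemma but costs an extra chain-rule step and the auxiliary fact \eqref{eq:domPB}, while yours is one application shorter at the price of re-examining the lemma's proof. You also correctly flag, as the paper does implicitly via \eqref{eq:mgvconjugate} and \eqref{eq:domPB}, that the full domain and attainment of the inner maximization (needed for the sum rule and for $\bmv^{\star}$ to exist as an actual point) rest on $\dom\Psi=\mathcal{Z}$ and the coercivity of $\Psi$.
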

      \noindent {\bf (Step 2)}
      By using $\mathfrak{P}$ in \eqref{eq:S_def},
      we will confirm for any $x^{\star} \in \mathcal{X}$ that 
\begin{align}
& (\bmx^{\star},\bmv^{\star},\bmw^{\star}) \in \Fix(\Tlcp) 
\IFF \Tlcp(\bmx^{\star},\bmv^{\star},\bmw^{\star}) = (\bmx^{\star},\bmv^{\star},\bmw^{\star})  \nonumber \\
  \IFF&  (\mathfrak{P} \!-\! F)(\bmx^{\star},\!\bmv^{\star},\!\bmw^{\star}) \!\in\! (\mathfrak{P} \!+\! G)(\bmx^{\star},\!\bmv^{\star},\!\bmw^{\star}) \quad (\IFF \eqref{eq:zero_of_FG}) 
        \label{eq:claimC2outline}
\end{align}
implying thus, with Claim \ref{claim:translation}, $\bmx^{\star} \in \calS$ if and only if there exists $(\bmv^{\star}, \bmw^{\star}) \in \mathcal{Z} \times \mathcal{Z}$ such that $(\bmx^{\star}, \bmv^{\star}, \bmw^{\star}) \in \Fix(\Tlcp)$.

For proof of Claim~\ref{claim:translation}, we will use, in \eqref{eq:teagnoqeg} and \eqref{eq:gvewhywe}, 
\begin{align}
  (x \in \calX)\quad & \max_{\bmv \in \mathcal{Z}} \gv(x)
                       =\max_{\bmv \in \mathcal{Z}} \left(\langle \bmv, B^{\sfT}B\mathfrak{L}\bmx\rangle_{\mathcal{Z}} - \Psi(\bmv) - \frac{1}{2} \|B\bmv\|_{\widetilde{\mathcal{Z}}}^2 \right) \nonumber \\
                     & \phantom{\max_{\bmv \in \mathcal{Z}} \gv(x)}= \left[\left(\Psi+ \frac{1}{2} \|B \cdot \|_{\widetilde{\mathcal{Z}}}^2\right)^*\circ B^{\sfT} \right] \circ B\mathfrak{L}(\bmx),
                         \label{eq:mgvconjugate}
\end{align}
and
\begin{align}
    \label{eq:domPB}
  & \operatorname{dom}\left(
\left(\Psi+ \frac{1}{2} \|B \cdot \|_{\widetilde{\mathcal{Z}}}^2\right)^*\circ B^{\sfT}
    \right)= \widetilde{\mathcal{Z}},
\end{align}
where \eqref{eq:domPB} is verified, with the coercivity of $\Psi$, by
\begin{align}
  (\bmz \in \widetilde{\mathcal{Z}}) \quad &
                                             \left(\Psi+ \frac{1}{2} \|B \cdot \|_{\widetilde{\mathcal{Z}}}^2 \right)^*(B^{\sfT}{\bmz}) = \sup_{\bmv \in \mathcal{Z}} \left(\langle \bmv, B^{\sfT}\bmz\rangle_{\mathcal{Z}} - \Psi(\bmv) - \frac{1}{2} \|B\bmv\|_{\widetilde{\mathcal{Z}}}^2 \right) \nonumber \\
   & \leq \sup_{\bmv \in \mathcal{Z}} \left( - \Psi(\bmv)  \right)
 + \sup_{\bmv \in \mathcal{Z}} \left(\langle \bmv, B^{\sfT}\bmz\rangle_{\mathcal{Z}} - \frac{1}{2} \|B\bmv\|_{\widetilde{\mathcal{Z}}}^2 \right) \nonumber \\
  & =
  \max_{\bmv \in \mathcal{Z}} \left( - \Psi(\bmv)  \right)
  + \max_{\bmv \in \mathcal{Z}} \left(\langle B\bmv, \bmz\rangle_{\widetilde{\mathcal{Z}}} - \frac{1}{2} \|B\bmv\|_{\widetilde{\mathcal{Z}}}^2 \right) \nonumber
  < \infty.
\end{align}

Now, we shall prove Step 1 and Step 2.
\noindent {\bf Step 1: Proof of Claim~\ref{claim:translation}}. 
  Since the first three terms of the RHS of \eqref{eq:teagnoqeg} are differentiable over $\mathcal{X}$, the sum rule \eqref{eq:sumrule} implies
\begin{align}
  \partial J_{\Psi_B \circ \mathfrak{L}} (\bmx) &= \nabla \left(\frac{1}{2} \langle \bmx, (A^{\sfT}A - \mu \mathfrak{L}^{\sfT}B^{\sfT}B\mathfrak{L})\bmx\rangle_{\mathcal{X}}  + \frac{1}{2} \|\bmy\|_{\mathcal{Y}}^2 - \langle \bmy, A\bmx \rangle_{\mathcal{Y}} \right) + \partial \left(\mu \Psi \circ \mathfrak{L}  +  \mu\max_{\bmv \in \mathcal{Z}} \gv\right)(\bmx) \nonumber \\
                                     &= (A^{\sfT}A - \mu \mathfrak{L}^{\sfT}B^{\sfT}B\mathfrak{L})\bmx - A^{\sfT}\bmy
                                       +\mu \partial \left( \Psi \circ \mathfrak{L}  + \max_{\bmv \in \mathcal{Z}} \gv\right)(\bmx).
 \label{eq:ibuqehrh}
\end{align}
Moreover, by $\operatorname{dom}(\max_{\bmv \in \mathcal{Z}} \gv)=\mathcal{X}$ due to \eqref{eq:mgvconjugate} and \eqref{eq:domPB}, the sum rule \eqref{eq:sumrule} decomposes 
\eqref{eq:ibuqehrh} as 
\begin{align}
  \partial J_{\Psi_B \circ \mathfrak{L}} (\bmx)
  &= (A^{\sfT}A - \mu \mathfrak{L}^{\sfT}B^{\sfT}B\mathfrak{L})\bmx - A^{\sfT}\bmy
                                       +\mu \partial \left( \Psi \circ \mathfrak{L}\right)(\bmx)  + \mu \partial \left(\max_{\bmv \in \mathcal{Z}} \gv\right)(\bmx).                
 \label{eq:ibuqehrh3}
\end{align}  
Apply the chain rule \eqref{eq:chainrule} to $\partial (\Psi \circ \mathfrak{L})$ with $\operatorname{dom}( \Psi)=\mathcal{Z}$ for simplification 
\begin{align}
  \partial J_{\Psi_B \circ \mathfrak{L}} (\bmx)
  &= (A^{\sfT}A - \mu \mathfrak{L}^{\sfT}B^{\sfT}B\mathfrak{L})\bmx - A^{\sfT}\bmy
    +\mu \mathfrak{L}^{\sfT}\partial \Psi(\mathfrak{L}\bmx)  + \mu \partial \left(\max_{\bmv \in \mathcal{Z}} \gv\right)(\bmx).                
 \label{eq:ibuqehrh4}
\end{align}
Apply again 
the chain rule \eqref{eq:chainrule} to \eqref{eq:mgvconjugate} 
with \eqref{eq:domPB} for
  \begin{align}
    \partial \left(\max_{\bmv \in \mathcal{Z}} \gv\right) = (B\mathfrak{L})^{\sfT}\partial \left[\left(\Psi+ \frac{1}{2} \|B \cdot \|_{\widetilde{\mathcal{Z}}}^2\right)^*\circ B^{\sfT}\right] \circ B\mathfrak{L},
    \label{eq:C7dash}
  \end{align}
  and to $\partial \left[\left(\Psi+ \frac{1}{2} \|B \cdot \|_{\widetilde{\mathcal{Z}}}^2\right)^*\circ B^{\sfT}\right]$ in \eqref{eq:C7dash} with \eqref{eq:qualification} to deduce further simplification 
  
\begin{align}
  \partial J_{\Psi_B \circ \mathfrak{L}} (\bmx)
  &= (A^{\sfT}A - \mu \mathfrak{L}^{\sfT}B^{\sfT}B\mathfrak{L})\bmx - A^{\sfT}\bmy
    +\mu \mathfrak{L}^{\sfT}\partial \Psi(\mathfrak{L}\bmx)  + \mu
    (B^{\sfT}B\mathfrak{L})^{\sfT} \partial \left(\Psi+ \frac{1}{2} \|B \cdot \|_{\widetilde{\mathcal{Z}}}^2 \right)^*(B^{\sfT}B\mathfrak{L} \bmx).
  \nonumber \\
  \label{eq:prob_grad_sum}
\end{align}
Furthermore, by 
$\bmw^{\star} \in \partial \Psi(\mathfrak{L}\bmx^{\star}) \IFF \mathfrak{L}\bmx^{\star} \in \partial \Psi^{\ast} ( \bmw^{\star} )$ and
$\bmv^{\star} \in \partial \left(\Psi+ \frac{1}{2} \|B \cdot \|_{\widetilde{\mathcal{Z}}}^2 \right)^*(B^{\sfT}B\mathfrak{L} \bmx^{\star})
\IFF B^{\sfT}B\mathfrak{L} \bmx^{\star} \in  \partial \left(\Psi+ \frac{1}{2} \|B \cdot \|_{\widetilde{\mathcal{Z}}}^2 \right)(\bmv^{\star})=\partial \Psi(\bmv^{\star}) + B^{\sfT}B\bmv^{\star} [\text{due to the property \eqref{eq:subdifferentialinversion} and}$ $\text{the sum rule \eqref{eq:sumrule} with } \operatorname{dom}(\|B \cdot \|_{\widetilde{\mathcal{Z}}}^2) =\mathcal{Z}]$,
 we deduce from \eqref{eq:prob_grad_sum} 
	\begin{align}
	\relax & \ \bmx^{\star} \in \calS[ \IFF  \bmzero_{\mathcal{X}} \in  \partial J_{\Psi_B \circ \mathfrak{L}} (\bmx^{\star})] \nonumber \\
	\IFF & \  
	\left\{
               \begin{array}{l}
                 \bmzero_{\mathcal{X}} = (A^{\sfT}A - \mu \mathfrak{L}^{\sfT}B^{\sfT}B\mathfrak{L})\bmx^{\star} - A^{\sfT}\bmy  +\mu \mathfrak{L}^{\sfT} \bmw^{\star} + \mu(B^{\sfT}B\mathfrak{L})^{\sfT}\bmv^{\star}, \\                 
                 B^{\sfT}B\mathfrak{L} \bmx^{\star} \in \partial \Psi(\bmv^{\star}) + B^{\sfT}B\bmv^{\star}, \\
                 \mathfrak{L}\bmx^{\star} \in \partial \Psi^{\ast}( \bmw^{\star} )
	\end{array}
	\right.
	\nonumber \\
	\IFF & \
	\left\{
	\begin{array}{l}
	\bmzero_{\mathcal{X}} = (A^{\sfT}A - \mu \mathfrak{L}^{\sfT}B^{\sfT}B\mathfrak{L})\bmx^{\star} - A^{\sfT}\bmy + \mu \mathfrak{L}^{\sfT}B^{\sfT}B\bmv^{\star} + \mu \mathfrak{L}^{\sfT}\bmw^{\star},  \\
	\bmzero_{\mathcal{Z}} \in - \mu B^{\sfT}B\mathfrak{L}\bmx^{\star} + \mu B^{\sfT}B\bmv^{\star} + \mu \partial \Psi(\bmv^{\star}),  \\
	\bmzero_{\mathcal{Z}} \in -\mu \mathfrak{L}\bmx^{\star} + \mu \partial \Psi^{\ast}( \bmw^{\star})
	\end{array}
	\right.
	\nonumber \\
	\IFF &  (\bmzero_{\mathcal{X}}, \bmzero_{\mathcal{Z}}, \bmzero_{\mathcal{Z}}) \in F(\bmx^{\star}, \bmv^{\star}, \bmw^{\star}) + G(\bmx^{\star}, \bmv^{\star}, \bmw^{\star}) \nonumber 
	\end{align}
        which completes the proof of Claim~\ref{claim:translation}.

\noindent {\bf Step 2:} \eqref{eq:claimC2outline} is verified by the definitions of $\Tlcp$ and $\mathfrak{P}$ in Theorem~\ref{def:Tprop} as
\begin{align}
\relax & \ \Tlcp(\bmx,\bmv,\bmw) = (\bmxi, \bmzeta, \bmeta) \nonumber\\
\IFF & \
\left\{
\begin{array}{l}
\left[ \sigma \rmId  - (A^{\sfT}A - \mu \mathfrak{L}^{\sfT}B^{\sfT}B\mathfrak{L}) \right] \bmx - \mu \mathfrak{L}^{\sfT}B^{\sfT}B \bmv - \mu \mathfrak{L}^{\sfT}\bmw + A^{\sfT}\bmy = \sigma \bmxi,   \\
2\mu B^{\sfT}B\mathfrak{L} \bmxi - \mu B^{\sfT}B\mathfrak{L} \bmx + (\tau \rmId - \mu B^{\sfT}B)\bmv  \in \left[ \tau \rmId + \mu \partial \Psi( \cdot ) \right] (\bmzeta),   \\
2\mu \mathfrak{L}\bmxi - \mu \mathfrak{L}\bmx + \mu \bmw \in (\mu \rmId + \mu \partial \Psi^{\ast})(\bmeta) 
\end{array}
\right.
  \nonumber \\
  \IFF & \ (\mathfrak{P} - F)(\bmx,\bmv,\bmw) \in (\mathfrak{P} + G)(\bmxi,\bmzeta,\bmeta), \label{eq:Tlcp_PF_PG}
\end{align}
where we used the expression of the proximity operator as the resolvent of a subdifferential.

\noindent {\bf \underline{Proof of (b)}:} We first prove $\mathfrak{P} \succ \rmO_{\calR}$ under the condition \eqref{eq:stepsize_condition}.
The Schur complement (see e.g. \cite[Theorem 7.7.6]{horn2012matrix}) yields
\begin{align}
 \mathfrak{P} \succ \rmO_{\calR}
&\IFF \sigma \rmId - \begin{bmatrix}
 -\mu \mathfrak{L}^{\sfT} B^{\sfT} B & -\mu \mathfrak{L}^{\sfT}
\end{bmatrix} {\begin{bmatrix}
	\tau \rmId & \rmO_{\mathcal{Z}} \\
	\rmO_{\mathcal{Z}} & \mu \rmId
	\end{bmatrix}}^{-1} \begin{bmatrix}
-\mu B^{\sfT}B\mathfrak{L} \\
-\mu \mathfrak{L}
\end{bmatrix}  \succ \rmO_{\calX}  \nonumber \\
&\IFF \sigma \rmId - \frac{\mu^2}{\tau} \mathfrak{L}^{\sfT} {(B^{\sfT}B)}^2 \mathfrak{L} - \mu \mathfrak{L}^{\sfT}\mathfrak{L} \succ \rmO_{\calX} \nonumber \\
&\IFF \left( \sigma \rmId - \frac{\kappa}{2} A^{\sfT}A - \mu \mathfrak{L}^{\sfT}\mathfrak{L} \right) + \left( \frac{\kappa}{2}A^{\sfT}A - \frac{\mu^2}{\tau} \mathfrak{L}^{\sfT} {(B^{\sfT}B)}^2 \mathfrak{L} \right)  \succ \rmO_{\calX}. \nonumber 
\end{align}
From the condition \eqref{eq:stepsize_condition}, it is sufficient to show that $\frac{\kappa}{2}A^{\sfT}A - \frac{\mu^2}{\tau} \mathfrak{L}^{\sfT} {(B^{\sfT}B)}^2 \mathfrak{L} \succeq \rmO_{\mathcal{X}}$.
Recalling $\|B^{\sfT}B\|_{\rm op}=\|B^{\sfT}\|_{\rm op}^2=\|B\|_{\rm op}^2$ for $B \in \mathcal{B}(\calZ, \widetilde{\calZ})$ and using the condition \eqref{eq:stepsize_condition}, we have 
\begin{align}
  (\forall \bmx \in \calX) & \ \left\langle \bmx, \left(\frac{\mu^2}{\tau} \mathfrak{L}^{\sfT} {(B^{\sfT}B)}^2 \mathfrak{L} \right) \bmx \right\rangle_{\mathcal{X}}=  \frac{\mu^2}{\tau} \|B^{\sfT} B\mathfrak{L}\bmx\|_{\mathcal{Z}}^2
    \leq \frac{\mu^2}{\tau} \| B\|_{\rm op}^2 \|B\mathfrak{L}\bmx\|_{\widetilde{\mathcal{Z}}}^2 \nonumber \\
\phantom{ (\forall \bmx \in \calX)} \leq & \  \mu^2 {\left[ \left( \frac{\kappa}{2} + \frac{2}{\kappa} \right) \mu \| B\|_{\rm op}^2\right]}^{-1} \| B\|_{\rm op}^2 \|B\mathfrak{L}\bmx\|_{\widetilde{\mathcal{Z}}}^2
\leq   \mu \frac{\kappa}{2} \|B\mathfrak{L}\bmx\|_{\widetilde{\mathcal{Z}}}^2,
\end{align}
which yields 
\begin{align}
(\forall \bmx \in \calX) \ \left\langle \bmx, \left( \frac{\kappa}{2}A^{\sfT}A - \frac{\mu^2}{\tau} \mathfrak{L}^{\sfT} {(B^{\sfT}B)}^2 \mathfrak{L} \right) \bmx \right\rangle_{\mathcal{X}}
\geq & \frac{\kappa}{2} \langle \bmx, \left( A^{\sfT}A - \mu \mathfrak{L}^{\sfT}B^{\sfT}B\mathfrak{L} \right) \bmx \rangle_{\mathcal{X}} \geq 0, \nonumber
\end{align}
where the last inequality is due to the assumption $A^{\sfT}A - \mu \mathfrak{L}^{\sfT}B^{\sfT}B\mathfrak{L} \succeq \rmO_{\mathcal{X}}$ in Problem~1.

Next, we prove that $\Tlcp$ is $\frac{\kappa}{2\kappa - 1}$-averaged nonexpansive over $(\calR, \langle \cdot, \cdot \rangle_{\mathfrak{P}}, \| \cdot\|_{\mathfrak{P}})$.
By applying $\mathfrak{P}\succ \rmO_{\calR}$ to \eqref{eq:Tlcp_PF_PG}, we have for $(\bmx,\bmv,\bmw), (\bmxi,\bmzeta,\bmeta) \in \calR$,
\begin{align}
\Tlcp(\bmx,\bmv,\bmw) = (\bmxi,\bmzeta,\bmeta) 
&\IFF (\rmId - \mathfrak{P}^{-1} \circ F)(\bmx,\bmv,\bmw) \in (\rmId + \mathfrak{P}^{-1} \circ G)(\bmxi,\bmzeta,\bmeta). \label{eq:obuqegeonrh} 
\end{align}
Moreover, as will be shown in the end of this proof, $\mathfrak{P}^{-1} \circ G$ is maximally monotone over $(\calR, \langle \cdot, \cdot \rangle_{\mathfrak{P}}, \| \cdot\|_{\mathfrak{P}})$, by which the resolvent $(\rmId + \mathfrak{P}^{-1} \circ G)^{-1}$ is guaranteed to be single-valued and therefore
\begin{equation}
  \label{eq:Tlcpex}
  \Tlcp= (\rmId + \mathfrak{P}^{-1} \circ G)^{-1}\circ (\rmId - \mathfrak{P}^{-1} \circ F),
\end{equation}
where $\frac{1}{2}$-averaged nonexpansiveness of $(\rmId + \mathfrak{P}^{-1} \circ G)^{-1}$ is guaranteed automatically.

To show that $\Tlcp$ is $\frac{\kappa}{2\kappa-1}$-averaged nonexpansive in $(\Hlb, \langle\cdot, \cdot \rangle_{\Hlb}, \| \cdot\|_{\Hlb})$, Fact~\ref{fact:averaged} tells us the it is sufficient to show the nonexpansiveness of 
$\rmId - \kappa \mathfrak{P}^{-1} \circ F$ 
because of
$\rmId - \mathfrak{P}^{-1} \circ F = \left( 1 - \frac{1}{\kappa} \right) \rmId + \frac{1}{\kappa} \left( \rmId - \kappa \mathfrak{P}^{-1} \circ F \right)$.

Define first 
\begin{equation*}
M := \begin{bmatrix}
A^{\sfT}A - \mu \mathfrak{L}^{\sfT}B^{\sfT}B\mathfrak{L} & \rmO_{\mathcal{B}(\mathcal{Z}, \mathcal{X})} & \rmO_{\mathcal{B}(\mathcal{Z}, \mathcal{X})} \\
\rmO_{\mathcal{B}(\mathcal{X}, \mathcal{Z})} & \mu B^{\sfT}B & \rmO_{\mathcal{Z}} \\
\rmO_{\mathcal{B}(\mathcal{X}, \mathcal{Z})} & \rmO_{\mathcal{Z}} & \rmO_{\mathcal{Z}}
\end{bmatrix} \in \mathcal{B}(\calR,\calR),
\end{equation*}
which satisfies
$
F(\bmx,\bmv,\bmw) = M\begin{bmatrix}
\bmx \\
\bmv \\
\bmw
\end{bmatrix} + \begin{bmatrix}
-A^{\sfT}\bmy \\
\bmzero_{\mathcal{Z}} \\
\bmzero_{\mathcal{Z}}
\end{bmatrix}$
for every $(\bmx,\bmv,\bmw) \in \calR$, $M^{\sfT}=M$, and $M\succeq \rmO_{\calR}$ (due to the assumption $A^{\sfT}A - \mu \mathfrak{L}^{\sfT}B^{\sfT}B\mathfrak{L} \succeq \rmO_{\mathcal{X}}$ in Problem 1). Then we have for all ${\mathbf u}_1, {\mathbf u}_2 \in \calR$,
\begin{align}
& \| (\rmId - \kappa \mathfrak{P}^{-1} \circ F)({\mathbf u}_1) - (\rmId - \kappa \mathfrak{P}^{-1} \circ F) ({\mathbf u}_2) \|_{\mathfrak{P}}^2  \nonumber \\
=& \| ({\mathbf u}_1 - {\mathbf u}_2) - \kappa [(\mathfrak{P}^{-1} \circ F) ({\mathbf u}_1) - (\mathfrak{P}^{-1} \circ F)({\mathbf u}_2)] \|_{\mathfrak{P}}^2  \nonumber \\
=& \| {\mathbf u}_1 - {\mathbf u}_2 \|_{\mathfrak{P}}^2 - 2\kappa \langle {\mathbf u}_1 - {\mathbf u}_2, F({\mathbf u}_1) - F({\mathbf u}_2)\rangle_{\mathcal{H}} + \kappa^2 \| (\mathfrak{P}^{-1} \circ F)({\mathbf u}_1) - (\mathfrak{P}^{-1} \circ F)({\mathbf u}_2) \|_{\mathfrak{P}}^2  \nonumber \\
=& \| {\mathbf u}_1 - {\mathbf u}_2 \|_{\mathfrak{P}}^2 - 2\kappa \langle {\mathbf u}_1 - {\mathbf u}_2, M{\mathbf u}_1 - M{\mathbf u}_2\rangle_{\mathcal{H}}  + \kappa^2 \langle \mathfrak{P}^{-1}M{\mathbf u}_1 - \mathfrak{P}^{-1}M{\mathbf u}_2, M{\mathbf u}_1 - M{\mathbf u}_2 \rangle_{\mathcal{H}} \nonumber \\
= & \| {\mathbf u}_1 - {\mathbf u}_2 \|_{\mathfrak{P}}^2 - 2\kappa \left\langle {\mathbf u}_1 - {\mathbf u}_2, \left(M - \frac{\kappa}{2} M \mathfrak{P}^{-1}M\right) ({\mathbf u}_1 - {\mathbf u}_2) \right\rangle_{\Hlb}, \nonumber 
\end{align}
which implies
\begin{align*}
\text{($\rmId - \kappa \mathfrak{P}^{-1} \circ F$ is nonexpansive)}
\IFF M - \frac{\kappa}{2} M \mathfrak{P}^{-1}M \succeq \rmO_{\calR} 
\IFF \begin{bmatrix}
M & M \\
M & 2\kappa^{-1} \mathfrak{P}
\end{bmatrix} \succeq \rmO_{\calR\times\calR},
\end{align*}
where the last equivalence is due to the Schur complement. Moreover, since for every ${\mathbf u}_1, {\mathbf u}_2 \in \calR$
\begin{align}
  & \left\langle
  \begin{bmatrix}
        {\mathbf u}_1 \\
        {\mathbf u}_2 
      \end{bmatrix},
                        \begin{bmatrix}
M & M \\
M & 2\kappa^{-1} \mathfrak{P}
\end{bmatrix}
      \begin{bmatrix}
        {\mathbf u}_1 \\
        {\mathbf u}_2 
      \end{bmatrix}
  \right\rangle_{\calR\times \calR} \nonumber \\
  & = \langle {\mathbf u}_1, M{\mathbf u}_1\rangle_{\calR}+\langle {\mathbf u}_1, M{\mathbf u}_2\rangle_{\calR}+\langle {\mathbf u}_2, M{\mathbf u}_1\rangle_{\calR} +2\kappa^{-1} \langle {\mathbf u}_2, \mathfrak{P} {\mathbf u}_2\rangle_{\calR} \nonumber \\
 & =  \langle {\mathbf u}_1+{\mathbf u}_2, M({\mathbf u}_1+{\mathbf u}_2)\rangle_{\calR} +2\kappa^{-1} \left\langle {\mathbf u}_2, \left(\mathfrak{P}-\frac{\kappa}{2} M\right) {\mathbf u}_2\right\rangle_{\calR}, \nonumber 
\end{align}
to show the nonexpansiveness of $\rmId - \kappa \mathfrak{P}^{-1} \circ F$, it is sufficient to prove \\
$\mathfrak{P} - \frac{\kappa}{2}M \succeq~\rmO_{\calR}$, where
\begin{align}
\relax & \mathfrak{P} - \frac{\kappa}{2}M 
= \begin{bmatrix}
\sigma \rmId & - \mu \mathfrak{L}^{\sfT}B^{\sfT}B & -\mu \mathfrak{L}^{\sfT} \\
-\mu B^{\sfT}B\mathfrak{L} & \tau \rmId & \rmO_{\mathcal{Z}} \\
-\mu \mathfrak{L} & \rmO_{\mathcal{Z}} & \mu \rmId
\end{bmatrix}
- \frac{\kappa}{2} \begin{bmatrix}
A^{\sfT}A - \mu \mathfrak{L}^{\sfT}B^{\sfT}B\mathfrak{L} & \rmO_{\mathcal{B}(\mathcal{Z}, \mathcal{X})} & \rmO_{\mathcal{B}(\mathcal{Z}, \mathcal{X})} \\
\rmO_{\mathcal{B}(\mathcal{X}, \mathcal{Z})} & \mu B^{\sfT}B & \rmO_{\mathcal{Z}} \\
\rmO_{\mathcal{B}(\mathcal{X}, \mathcal{Z})} & \rmO_{\mathcal{Z}} & \rmO_{\mathcal{Z}}
\end{bmatrix}  \nonumber \\
&=\!\! \begin{bmatrix}
\sigma \rmId - (\kappa/2) A^{\sfT}A & \rmO_{\mathcal{B}(\mathcal{Z}, \mathcal{X})} & -\mu \mathfrak{L}^{\sfT} \\
\rmO_{\mathcal{B}(\mathcal{X}, \mathcal{Z})} & \rmO_{\mathcal{Z}} & \rmO_{\mathcal{Z}} \\
-\mu \mathfrak{L} & \rmO_{\mathcal{Z}} & \mu \rmId
\end{bmatrix}\!\! +\!\! \begin{bmatrix}
(\kappa \mu / 2)\mathfrak{L}^{\sfT}B^{\sfT}B\mathfrak{L} & - \mu \mathfrak{L}^{\sfT}B^{\sfT}B & \rmO_{\mathcal{B}(\mathcal{Z}, \mathcal{X})} \\
-\mu B^{\sfT}B\mathfrak{L} & \tau \rmId - (\kappa \mu / 2) B^{\sfT}B & \rmO_{\mathcal{Z}} \\
\rmO_{\mathcal{B}(\mathcal{X}, \mathcal{Z})} & \rmO_{\mathcal{Z}} & \rmO_{\mathcal{Z}}
\end{bmatrix}.
\nonumber
\end{align}
Indeed, by the Schur complement, we have
\begin{align*}
\relax &\begin{bmatrix}
\sigma \rmId - (\kappa/2) A^{\sfT}A & \rmO_{\mathcal{B}(\mathcal{Z}, \mathcal{X})} & -\mu \mathfrak{L}^{\sfT} \\
\rmO_{\mathcal{B}(\mathcal{X}, \mathcal{Z})} & \rmO_{\mathcal{Z}} & \rmO_{\mathcal{Z}} \\
-\mu \mathfrak{L} & \rmO_{\mathcal{Z}} & \mu \rmId
\end{bmatrix} \succeq \rmO_{\calR} 
                              \Leftrightarrow \sigma \rmId - \frac{\kappa}{2} A^{\sfT}A - \mu \mathfrak{L}^{\sfT}\mathfrak{L} \succeq \rmO_{\mathcal{X}} (\Leftarrow \eqref{eq:stepsize_condition})
\end{align*}
and 
\begin{align}
 \relax  \rmO_{\calR} \preceq &\begin{bmatrix}
(\kappa \mu / 2)\mathfrak{L}^{\sfT}B^{\sfT}B\mathfrak{L} & - \mu \mathfrak{L}^{\sfT}B^{\sfT}B & \rmO_{\mathcal{B}(\mathcal{Z}, \mathcal{X})} \\
-\mu B^{\sfT}B\mathfrak{L} & \tau \rmId - (\kappa \mu / 2) B^{\sfT}B & \rmO_{\mathcal{Z}} \\
\rmO_{\mathcal{B}(\mathcal{X}, \mathcal{Z})} & \rmO_{\mathcal{Z}} & \rmO_{\mathcal{Z}}
\end{bmatrix}
            \nonumber \\
         \IFF
         \ \rmO_{\mathcal{X} \times \mathcal{Z}} \preceq &
\begin{bmatrix}
\mathfrak{L}^{\sfT} & \rmO_{\mathcal{Z}} \\
 \rmO_{\mathcal{Z}}  & \rmId \\
\end{bmatrix}                                                            
\begin{bmatrix}
(\kappa \mu / 2)B^{\sfT}B & - \mu B^{\sfT}B \\
-\mu B^{\sfT}B & \tau \rmId - (\kappa \mu / 2) B^{\sfT}B \\
\end{bmatrix}
\begin{bmatrix}
\mathfrak{L} & \rmO_{\mathcal{Z}} \\
 \rmO_{\mathcal{Z}}  & \rmId \\
\end{bmatrix}   
            \nonumber \\
  \Leftarrow 
         \ \rmO_{\mathcal{Z} \times \mathcal{Z}} \preceq & \begin{bmatrix}
           \frac{\kappa \mu}{2}B^{\sfT}B &  -\mu B^{\sfT}B  \\
           -\mu B^{\sfT}B & \tau \rmId - \frac{\kappa \mu}{2} B^{\sfT}B
\end{bmatrix},
                                          \label{eq:FP_schur2}
\end{align}
implying thus $\text{(RHS of \eqref{eq:FP_schur2})} \Rightarrow \mathfrak{P} - \frac{\kappa}{2}M \succeq \rmO_{\calR}$.
The RHS of \eqref{eq:FP_schur2} is ensured by \eqref{eq:stepsize_condition}
because for every $v_1, v_2 \in \calZ$
\begin{align}
  & \left\langle
  \begin{bmatrix}
        v_1 \\
        v_2 
      \end{bmatrix},
 \begin{bmatrix}
           \frac{\kappa \mu}{2}B^{\sfT}B &  -\mu B^{\sfT}B  \\
           -\mu B^{\sfT}B & \tau \rmId - \frac{\kappa \mu}{2} B^{\sfT}B
\end{bmatrix}  
      \begin{bmatrix}
        v_1 \\
        v_2 
      \end{bmatrix}
  \right\rangle_{\mathcal{Z}\times \mathcal{Z}} \nonumber \\
  & =  \langle v_1, \frac{\kappa \mu}{2}B^{\sfT}B v_1\rangle_{\mathcal{Z}}-\langle v_1,  \mu B^{\sfT}B v_2\rangle_{\mathcal{Z}}-\langle v_2,  \mu B^{\sfT}Bv_1\rangle_{\mathcal{Z}} + \left\langle v_2, \left[\tau \rmId - \frac{\kappa \mu}{2} B^{\sfT}B\right]
         v_2\right\rangle_{\mathcal{Z}} \nonumber \\
  & =  \frac{2\mu}{\kappa}\left\| \frac{\kappa}{2}Bv_1-Bv_2 \right\|_{\widetilde{\mathcal{Z}}}^2 -\frac{2\mu}{\kappa}\|Bv_2\|_{\widetilde{\mathcal{Z}}}^2
        + \left\langle v_2, \left[\tau \rmId - \frac{\kappa \mu}{2} B^{\sfT}B\right] v_2\right\rangle_{\mathcal{Z}} \nonumber \\
  & =  \frac{2\mu}{\kappa}\left\| \frac{\kappa}{2}Bv_1-Bv_2 \right\|_{\widetilde{\mathcal{Z}}}^2 + \tau\| v_2\|_{\mathcal{Z}}^2
        -\mu\left(\frac{\kappa }{2}+\frac{2}{\kappa}\right)\|Bv_2\|_{\widetilde{\mathcal{Z}}}^2 \nonumber \\
  & \geq   \tau\| v_2\|_{\mathcal{Z}}^2
           -\mu\left(\frac{\kappa }{2}+\frac{2}{\kappa}\right)\|Bv_2\|_{\widetilde{\mathcal{Z}}}^2
           \geq  \left(\tau
           -\mu\left(\frac{\kappa }{2}+\frac{2}{\kappa}\right)\|B\|_{\rm op}^2\right)\| v_2\|_{\mathcal{Z}}^2\geq 0.
\nonumber
\end{align}
Therefore we have proved that $\rmId - \mathfrak{P}^{-1} \circ F$ is $\frac{1}{\kappa}$-averaged nonexpansive and that
$\Tlcp= (\rmId + \mathfrak{P}^{-1} \circ G)^{-1}\circ (\rmId - \mathfrak{P}^{-1} \circ F)$ 
is $\frac{\kappa}{2\kappa - 1}$-averaged nonexpansive over $(\calR, \langle \cdot, \cdot \rangle_{\mathfrak{P}}, \| \cdot\|_{\mathfrak{P}})$.

Finally, the maximal monotonicity of $\mathfrak{P}^{-1} \circ G$ over $(\calR, \langle \cdot, \cdot \rangle_{\mathfrak{P}}, \| \cdot\|_{\mathfrak{P}})$ is shown as follows.
Let $G_1 \colon \calR(=\mathcal{X}\times \mathcal{Z} \times \mathcal{Z}) \to 2^{\calR} \colon (x, v, w) \mapsto \{0_{\mathcal{X}}\} \times (\mu \partial \Psi(v)) \times (\mu \partial \Psi^{\ast}(w))$ and $G_2 \colon \calR \to \calR \colon (x,v,w) \mapsto (\mu \mathfrak{L}^{\sfT}B^{\sfT}Bv + \mu \mathfrak{L}^{\sfT}w, -\mu B^{\sfT}B\mathfrak{L}x, -\mu \mathfrak{L}x)$.
Then $G_1$ is maximally monotone over $(\calR, \langle \cdot, \cdot \rangle_{\Hlb}, \| \cdot\|_{\Hlb})$ by \cite[Theorem 20.48, Proposition 16.9 and 20.23]{Bauschke2011-convex}.
Also, $G_2$ is a bounded linear skew-symmetric operator, i.e., $G_2^{\sfT} = -G_2$, and is thus maximally monotone over $(\calR, \langle \cdot, \cdot \rangle_{\Hlb}, \| \cdot\|_{\Hlb})$ by \cite[Example 20.35]{Bauschke2011-convex}.
Then, by $\dom(G_2) = \calR$ and \cite[Corollary 25.5(i)]{Bauschke2011-convex}, $G = G_1+G_2$ is maximally monotone over $(\calR, \langle \cdot, \cdot \rangle_{\Hlb}, \| \cdot\|_{\Hlb})$, which implies the monotonicity of $\mathfrak{P}^{-1} \circ G$ over $(\calR, \langle \cdot, \cdot \rangle_{\mathfrak{P}}, \| \cdot\|_{\mathfrak{P}})$.
Finally, we confirm the maximal monotonicity of $\mathfrak{P}^{-1} \circ G$ over $(\calR, \langle \cdot, \cdot \rangle_{\mathfrak{P}}, \| \cdot\|_{\mathfrak{P}})$ by contradiction.
Assume that there exists $({\mathbf u},{\mathbf z}) \not\in \gra(\mathfrak{P}^{-1} \circ G)$, which means $({\mathbf u}, \mathfrak{P}{\mathbf z}) \not\in \gra(G)$, such that for all $({\mathbf u}', {\mathbf z}') \in \gra(\mathfrak{P}^{-1} \circ G)$, $\langle {\mathbf u} - {\mathbf u}', {\mathbf z} - {\mathbf z}' \rangle_{\mathfrak{P}} = \langle {\mathbf u} - {\mathbf u}', \mathfrak{P} ({\mathbf z} - {\mathbf z}') \rangle_{\Hlb} \geq 0$.
However, since $({\mathbf u}', \mathfrak{P} {\mathbf z}') \in \gra(G)$, it contradicts the maximal monotonicity of $G$ over $(\calR, \langle \cdot, \cdot \rangle_{\Hlb}, \| \cdot\|_{\Hlb})$.

\noindent {\bf \underline{Proof of (c)}}: Thanks to (b), the direct application of Krasnosel'ski{\u{\i}}-Mann iteration in Fact~\ref{fact:KM} to $\Tlcp$ yields (c).
\hfill \qed

\renewcommand{\sfT}{\top} 
\section{Proof of Proposition~\ref{rem:selectionB}} \label{app:prop2}
If $\theta=0$, we have $B_{\theta}=\rmO_l$ and $J_{\Psi_{\rmO_l} \circ \mathfrak{L}}\in \Gamma_0(\mathbb{R}^n)$ for all $\bmy \in \bbR^m$ by Proposition~\ref{prop:exJ_cond}(b).

Let $\theta \in (0, 1]$.
Proposition~\ref{prop:exJ_cond}(b) shows
\begin{align}
(J_{\Psi_{B_{\theta}} \circ \mathfrak{L}} \in \Gamma_0(\mathbb{R}^n) \text{ for all $\bmy \in \bbR^m$)} \Leftarrow \rmO_n  \preceq A^{\sfT}A - \mu \mathfrak{L}^{\sfT}B_{\theta}^{\sfT}B_{\theta}\mathfrak{L}.
\label{eq:eqD1}
\end{align}
The relation $[\rmO_{l \times (n-l)}\ \ \rmI_{l}] \tilde{\mathfrak{L}} = \mathfrak{L}$ and the definition \eqref{eq:tildeAdef} of $\tilde{A}_1$ and $\tilde{A}_2$ show
\begin{align}
\relax &  \rmO_n  \preceq A^{\sfT}A - \mu \mathfrak{L}^{\sfT}B_{\theta}^{\sfT}B_{\theta}\mathfrak{L}= A^{\sfT}A - \mu ([\rmO_{l \times (n-l)}\ \ \rmI_{l}] \tilde{\mathfrak{L}})^{\sfT}B_{\theta}^{\sfT}B_{\theta}([\rmO_{l \times (n-l)}\ \ \rmI_{l}] \tilde{\mathfrak{L}}) \nonumber \\
\IFF \ & \rmO_n  \preceq {(A\tilde{\mathfrak{L}}^{-1})}^{\sfT} {(A\tilde{\mathfrak{L}}^{-1})} - \mu {[\rmO_{l \times (n-l)}\ \ \rmI_{l}]}^{\sfT}B_{\theta}^{\sfT} B_{\theta} [\rmO_{l \times (n-l)}\ \ \rmI_{l}] \nonumber \\
\relax & \qquad = [\tilde{A}_1\ \ \tilde{A}_2]^{\sfT} [\tilde{A}_1\ \ \tilde{A}_2] - \mu {[\rmO_{l \times (n-l)}\ \ \rmI_{l}]}^{\sfT}B_{\theta}^{\sfT} B_{\theta} [\rmO_{l \times (n-l)}\ \ \rmI_{l}] \nonumber \\
\relax & \qquad = \begin{bmatrix}
\tilde{A}_1^{\sfT}\tilde{A}_1 & \tilde{A}_1^{\sfT}\tilde{A}_2 \\
\tilde{A}_2^{\sfT}\tilde{A}_1 & \tilde{A}_2^{\sfT}\tilde{A}_2 - \mu B_{\theta}^{\sfT}B_{\theta}
\end{bmatrix}. \label{eq:iff_lcp_block}
\end{align}
Note that $\tilde{A}_1^{\sfT}\tilde{A}_1\succeq \rmO_{l}$ holds obviously and $\tilde{A}_1^{\sfT}\tilde{A}_1{(\tilde{A}_1^{\sfT}\tilde{A}_1)}^{\dagger}\tilde{A}_1^{\sfT}\tilde{A}_2 = \tilde{A}_1^{\sfT}\tilde{A}_2$ holds due to $
\ran(\tilde{A}_1^{\sfT}\tilde{A}_2) \subset \ran(\tilde{A}_1^{\sfT}) = \ran(\tilde{A}_1^{\sfT} \tilde{A}_1) \subset \Null\left(\tilde{A}_1^{\sfT}\tilde{A}_1{(\tilde{A}_1^{\sfT}\tilde{A}_1)}^{\dagger} - \rmI_{n-l} \right)$.
Thus \cite[Theorem~1]{albert1969-conditions}\renewcommand{\thefootnote}{\arabic{footnote}}\setcounter{footnote}{9}\footnote{
 	\cite[Theorem~1]{albert1969-conditions} shows that for a block matrix
 	\begin{equation*}
 	S := \begin{bmatrix}
 	S_{11} & S_{12} \\
 	S_{12}^{\sfT} & S_{22}
 	\end{bmatrix}
 	\end{equation*}
 	with $S_{11}$ and $S_{22}$ symmetric, $S \succeq \rmO$ if and only if $S_{11} \succeq \rmO$, $S_{11}S_{11}^{\dagger}S_{12} = S_{12}$, and $S_{22} - S_{12}^{\sfT}S_{11}^{\dagger}S_{12} \succeq \rmO$.
       }
      implies
\begin{align}\relax
[\text{(LHS of \eqref{eq:eqD1})$\IFF$}]  
\text{(RHS of \eqref{eq:iff_lcp_block})} \IFF \tilde{A}_2^{\sfT}\tilde{A}_2 - \mu B_{\theta}^{\sfT}B_{\theta} - \tilde{A}_2^{\sfT}\tilde{A}_1 {(\tilde{A}_1^{\sfT}\tilde{A}_1)}^{\dagger}\tilde{A}_1^{\sfT}\tilde{A}_2 \succeq \rmO_{l}
\label{eq:B4}
\end{align}
Since $B_{\theta}$ in \eqref{eq:Btheta} satisfies $\theta^{-1}\mu B_{\theta}^{\sfT}B_{\theta} = \tilde{A}_2^{\sfT}\tilde{A}_2 - \tilde{A}_2^{\sfT}\tilde{A}_1 {(\tilde{A}_1^{\sfT}\tilde{A}_1)}^{\dagger}\tilde{A}_1^{\sfT}\tilde{A}_2$, we have
\begin{align}
\text{(RHS of \eqref{eq:B4})} 
  \IFF \theta^{-1}\mu B_{\theta}^{\sfT}B_{\theta} - \mu B_{\theta}^{\sfT}B_{\theta} \succeq \rmO_{l}.
\label{eq:ADlast}
\end{align}
Since RHS of \eqref{eq:ADlast} holds due to
$\theta^{-1}\geq 1$ and
$B_{\theta}^{\sfT}B_{\theta} \succeq \rmO_{l}$, $J_{\Psi_{B_{\theta}} \circ \mathfrak{L}} \in \Gamma_0(\bbR^n)$ has been proven.
\hfill \qed  

\bibliographystyle{plain}
\bibliography{AYY2020_GME0903}
\end{document}